\newtheorem{case}{Case}
\newtheorem{theorem}{Theorem}[section]
\newtheorem*{main3}{Theorem 1.4}
\newtheorem{lemma}[theorem]{Lemma}
\newtheorem{corollary}[theorem]{Corollary}
\newtheorem{proposition}[theorem]{Proposition}
\newtheorem{definition}[theorem]{Definition}
\newtheorem*{ack}{Acknowledgements}
\newcommand{\Z}{\mathbb{Z}}
\newcommand{\F}{\mathbb{F}} 
\newcommand{\Q}{\mathbb{Q}}
\newcommand{\C}{\mathbb{C}}
\DeclareMathOperator{\pr}{pr}
\DeclareMathOperator{\Gal}{Gal}
\DeclareMathOperator{\AGL}{AGL}
\DeclareMathOperator{\GL}{GL}
\DeclareMathOperator{\im}{im}
\DeclareMathOperator{\tr}{tr}
\DeclareMathOperator{\Aut}{Aut}
\newcommand{\agl}[1]{\ensuremath{AGL_2(\mathbb{Z}/2^{#1}\mathbb{Z})}}
\newcommand{\gl}[1]{\ensuremath{GL_2(\mathbb{Z}/2^{#1}\mathbb{Z})}}
\newcommand{\ztwok}[1]{\ensuremath{(\mathbb{Z}/2^{#1}\mathbb{Z})}}
\newcommand{\ih}{Inductive Hypothesis: }
\newcommand{\artin}[2]{\genfrac{[}{]}{}{}{#1}{#2}}
\begin{document}

\title[The Density of Primes]{The Density of Primes Dividing a Particular Non-Linear Recurrence Sequence}
\author[Block Gorman]{Alexi Block Gorman}
\author[Genao]{Tyler Genao}
\author[Hwang]{Heesu Hwang}
\author[Kantor]{Noam Kantor}
\author[Parsons]{Sarah Parsons}
\author[Rouse]{Jeremy Rouse}
\thanks{All authors were supported by the NSF grant DMS-1461189.}

\address{Department of Mathematics, Wellesley College, Wellesley, MA 02481}
\email{ablockgo@wellesley.edu}
\address{Department of Mathematical Sciences, Florida Atlantic University, Boca Raton, FL 33431}
\email{tgenao2013@fau.edu}
\address{Department of Mathematics, Princeton University, Princeton, NJ 08544}
\email{hshwang@princeton.edu}
\address{Department of Mathematics, Emory University, Atlanta, GA 30322}
\email{noam.kantor@emory.edu}
\address{Department of Mathematics, Wake Forest University, Winston-Salem, NC 27109}
\email{parssy12@wfu.edu}
\email{rouseja@wfu.edu}

\begin{abstract}
Define the ECHO sequence $\{b_n\}$ recursively by $(b_0,b_1,b_2,b_3)=(1,1,2,1)$ and for $n\geq 4$,
$$b_n=\begin{cases}
\dfrac{b_{n-1}b_{n-3}-b_{n-2}^2}{b_{n-4}} &\mathrm{if}~ n\not\equiv 0\pmod 3,\\
\dfrac{b_{n-1}b_{n-3}-3b_{n-2}^2}{b_{n-4}} &\mathrm{if}~ n\equiv 0\pmod 3.
\end{cases}$$\\
We relate this sequence $\{b_n\}$ to the coordinates of points on the elliptic curve $E:y^2+y=x^3-3x+4$. We use Galois representations attached to $E$ to prove that the density of primes dividing a term in this sequence is equal to $\frac{179}{336}$. Furthermore, we describe an infinite family of elliptic curves whose Galois images match those of $E$.
\end{abstract}

\maketitle

\section{Introduction and Statement of Results}
Given an integer sequence, it is natural to ask which primes divide at least one term in the sequence. More generally, what is the density of such primes? A theorem of Hasse shows that the density of primes dividing numbers of form $2^n+1$ is $\frac{17}{24}$, a result that is intimately connected to a question asked by Sierpinski \cite{MR0103854} on the multiplicative order of 2 in finite fields. Lagarias \cite{MR789184} (see the correction \cite{MR1251907}) extended Hasse\textsc{\char13}s methods to show that the density of primes dividing a term of the Lucas sequence is $\frac{2}{3}$. Hasse and Lagarias\textsc{\char13} methods include taking field extensions created from the characteristic polynomial of sequences and analyzing the resulting Galois groups, which depend entirely on the behavior of the sequence in $\F_p$. The Chebotarev Density Theorem can then be applied to these groups to calculate numerical values.

In \cite{MR2640290}, Jones and Rouse extend this theme and employ the techniques of Galois theory to study sequences attached to elliptic curves. It has long been known that Galois representations attached to an elliptic curve encode much of the arithmetic of points on the curve. Classically, the Galois representation is constructed using the action of the absolute Galois group on the $N$-torsion points of the identity on the curve. In \cite{MR2640290}, the authors modify the classical representation to study $N$-division points of a non-identity point $P$. Fix a prime $\ell$;  when the Galois group associated to the $\ell$-division points is as large as possible, they use this arboreal representation to calculate the density of primes $p$ for which the reduction of $P$ modulo $p$ has order coprime to $\ell$.

In the present paper, we extend the bridge between sequences, Galois theory, and elliptic curves by using Galois representations to determine the density of primes dividing the following non-linear, integral recurrence sequence. The motivation for our work was a sequence given to us by our mentor with properties reminiscent of Somos-like sequences that, as we shall see, diverged from their collection of elliptic curves. We call this novel sequence the ECHO sequence, which is defined as follows.

\begin{definition}
We define the ECHO sequence $\{b_n\}$ recursively by $(b_0,b_1,b_2,b_3)=(1,1,2,1)$ and for $n\geq 4$,
$$b_n=\begin{cases}
\dfrac{b_{n-1}b_{n-3}-b_{n-2}^2}{b_{n-4}} &\mathrm{if}~ n\not\equiv 0\pmod 3,\\
\dfrac{b_{n-1}b_{n-3}-3b_{n-2}^2}{b_{n-4}} &\mathrm{if}~ n\equiv 0\pmod 3.
\end{cases}$$
\end{definition}

Consider the elliptic curve $E : y^{2} + y = x^{3} - 3x +4$ and $P = (4,7) \in E(\Q)$. We show in Lemma~\ref{twoNplusthree} that
$$(2n+1)P=\left(\frac{g(n)}{b_n^2},\frac{f(n)}{b_n^3}\right),$$
where $g(n)=2b_n^2-b_{n-3}b_{n+3}$, and
$$f(n)=\begin{cases}
b_n^3+3b_{n-1}^2b_{n+2}&\mathrm{if}~n\equiv 0\pmod 3,\\
b_n^3+b_{n-1}^2b_{n+2}&\mathrm{if}~n\equiv 1\pmod 3,\\
b_n^3+9b_{n-1}^2b_{n+2}&\mathrm{if}~n\equiv 2\pmod 3.
\end{cases}$$

This equation relates the question of whether a prime $p$ divides some element of the sequence $\{b_n\}$ to whether $P$ in $E(\mathbb{F}_p)$ has odd order, and Galois theory can be readily applied to the latter problem. We explain this connection in Section 3.

The motivation for looking at this specific curve and sequence is that the Galois representations of the $2^k$ division fields are not as large as possible, which was the case in the work of Jones and Rouse. Their work shows that a sequence arising from an elliptic curve with surjective 2-adic representation has divisibility-density $\frac{11}{21}$.

In comparison, below are some approximate densities for our sequence. We define $\pi'(x)$ as the number of primes $p$ less than $x$ such that $p$ divides some term of the sequence.

\begin{center}\begin{tabular}{c|ccc}
$x$ & $\pi'(x)$ & $\pi(x)$ & $\frac{\pi'(x)}{\pi(x)}$\\
\hline
$10$ & $3$ & $4$ & $0.75$\\
$10^{2}$ & $13$ & $25$ & $0.52$\\
$10^{3}$ & $91$ & $168$ & $0.541666667$\\
$10^{4}$ & $636$ & $1229$ & $0.517493897$\\
$10^{5}$ & $5118$ & $9592$ & $0.533569641$\\
$10^{6}$ & $41856$ & $78498$ & $0.533211037$\\
$10^{7}$ & $354158$ & $664579$ & $0.532905794$\\
$10^{8}$ & $3069170$ & $5761455$ & $0.532707450$\\
$10^{9}$ & $27092923$ & $50847534$ & $0.532826685$\\
$10^{10}$ & $242426819$ & $455052511$ & $0.532744712$\\
$10^{11}$ & $2193850226$ & $4118054813$ & $0.532739443$
\end{tabular}\end{center}
This data suggests that the fraction converges to a limit different from $\frac{11}{21}= 0.\overline{523809}$. In this paper, we prove that this limit exists, and we compute its value.
\begin{theorem}\label{main}
\[\lim_{x \to \infty} \frac{\pi'(x)}{\pi(x)} = \frac{179}{336} = 0.5327\overline{380952}.\]
\end{theorem}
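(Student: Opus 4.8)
The plan is to reduce the arithmetic question to a purely group-theoretic computation on the image of a $2$-adic Galois representation, and then to carry out that computation. First I would invoke Lemma~\ref{twoNplusthree} to restate the problem. Since $(2n+1)P=\bigl(g(n)/b_n^2,\,f(n)/b_n^3\bigr)$, a prime $p$ of good reduction (away from the finitely many exceptional primes, which do not affect any density) divides some term $b_n$ if and only if the reduction $\bar{P}$ satisfies $(2n+1)\bar{P}=O$ in $E(\F_p)$ for some $n\ge 0$. As $2n+1$ ranges over all odd positive integers, this occurs exactly when $\ord(\bar{P})$ is odd. Hence $\lim_{x\to\infty}\pi'(x)/\pi(x)$ equals the density of primes $p$ for which $\bar{P}$ has odd order in $E(\F_p)$; in particular the answer depends only on the behavior of $E$ and $P$ at $2$.

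Next I would rephrase odd order as a $2$-power divisibility condition. In the finite abelian group $E(\F_p)$, the element $\bar{P}$ has odd order if and only if $\bar{P}\in 2^k E(\F_p)$ for every $k$, i.e. $\bar{P}$ admits a $2^k$-division point defined over $\F_p$ for all $k$. Adjoining to $\Q$ the coordinates of $E[2^k]$ together with those of a point $Q$ satisfying $2^kQ=P$ produces a Galois extension whose group injects into $\agl{k}$: the linear part records the action on $E[2^k]\cong\ztwok{k}^2$, and the translation part records $\sigma(Q)-Q\in E[2^k]$. Writing $G_k\subseteq\agl{k}$ for this image, the Frobenius at $p$ fixes a $2^k$-division point of $P$ precisely when its image $(M,w)\in G_k$ has an affine fixed point, that is, when $w\in\im(M-I)$. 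By the Chebotarev Density Theorem the density of $p$ with $\bar{P}\in 2^kE(\F_p)$ equals $\#\{(M,w)\in G_k:\,w\in\im(M-I)\}/|G_k|$, and since these conditions are nested in $k$, the density we seek is the limit of this quantity as $k\to\infty$.

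The heart of the argument is to pin down the images $G_k$, and this is where I expect the main difficulty. The curve was chosen precisely because its $2$-adic representation is \emph{not} surjective, so I would first determine the linear image $H\subseteq\GL_2(\Z_2)$ together with the arboreal (translation) data explicitly: compute the images modulo $2,4,8,16,\dots$ using division polynomials and the field generated by a $2^k$-division point of $P$, identify a set of generators, and then prove a stabilization statement—that beyond some small level $k_0$ no new constraints appear, so that $G_k$ is the full preimage in $\agl{k}$ of $G_{k_0}$. Establishing this lifting/stabilization both for the linear part inside $\GL_2(\Z_2)$ and for the translation (that is, showing $\Q(E[2^k],2^{-k}P)/\Q(E[2^k])$ is as large as the constraints permit) is the technical crux of the paper.

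Finally, with $G_\infty=\varprojlim_k G_k$ described explicitly, I would compute the limiting proportion of elements possessing an affine fixed point by stratifying over the linear part. For each $M$ in the linear image, the contribution is the probability that the translation $w$ lands in $\im(M-I)$; when the translation is as large as allowed this is $1/|\ker(M-I)|$, which tends to $|\det(M-I)|_2$, while any genuine constraint on $w$ given $M$ must be carried as a correction factor. Summing these contributions against the Haar measure on $H$—a finite computation at level $k_0$ plus a convergent geometric tail coming from the cosets on which $M-I$ is singular—should yield the value $\tfrac{179}{336}$. As a consistency check, the surjective case of \cite{MR2640290} gives $\tfrac{11}{21}=\tfrac{176}{336}$, so the non-surjectivity of the $2$-adic image must account for a clean correction of exactly $\tfrac{1}{112}$, which I would verify falls out of the stratified count.
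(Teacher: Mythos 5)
Your overall route is the same as the paper's: reduce via Lemma~\ref{twoNplusthree} to the odd-order condition, translate odd order into the affine fixed-point condition $\vec{v}\in\im(M-I)$ for Frobenius under the arboreal representation, apply Chebotarev, and compute the density by stratifying over the linear part with weights $|\det(M-I)|_{2}$ and correction factors (this is exactly the paper's Section 8 computation, with $f_{M}\in\{0,\tfrac14,\tfrac12,1\}$ and convergent series on the singular strata). The genuine gap is at the step you yourself call the crux: determining $G_{k}$ for \emph{all} $k$. You propose to compute the image at small levels and then ``prove a stabilization statement --- that beyond some small level $k_{0}$ no new constraints appear, so that $G_{k}$ is the full preimage in $\agl{k}$ of $G_{k_{0}}$,'' but you give no argument for this, and it is not automatic: a compatible family of subgroups need not be the preimage of its reduction at any prescribed finite level, and identifying the correct level is precisely the hard content. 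The paper supplies the missing ideas as follows. A Frattini-subgroup argument (Lemma~\ref{BigLemma}) shows every element of $\agl{k}$ congruent to $(\vec{0},I)$ modulo $2^{3}$ lies in $\Phi(\agl{k})$, so maximal subgroups of $\agl{k}$ are preimages of maximal subgroups of $\agl{3}$; combined with finite Magma checks at level $3$, this yields the classification (Theorem~\ref{rum}) that the only ``kinetic'' subgroups --- those surjecting onto both $\gl{k}$ and $\agl{}$ --- are $\agl{k}$ itself and the index-four group $H_{k}$. One then verifies (Lemma~\ref{IsHk}) that the actual image is kinetic but proper: surjectivity of the classical representation at all $2$-power levels via the criterion of \cite{MR2995149}, surjectivity onto $\agl{}$ because $P$ is not twice a rational point, and properness via Theorem 5.2 of \cite{MR2640290} together with the Magma verification that $\Q(\beta_{1})\subseteq\Q(E[4])$. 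Note also that in this example the \emph{linear} image is all of $\GL_{2}(\Z_{2})$; the failure of surjectivity lives entirely in the translation part, so your plan to ``first determine the linear image $H\subseteq\GL_{2}(\Z_{2})$'' would by itself detect nothing unusual.

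A second, smaller gap: your assertion that ``since these conditions are nested in $k$, the density we seek is the limit of this quantity as $k\to\infty$'' interchanges the limits in $x$ and $k$ without justification; the density of a nested intersection of sets of primes is not automatically the limit of their densities. The paper handles this by sandwiching the odd-order primes between two unions of conjugacy classes at each level --- $S_{1}$ (where $\det(M-I)\not\equiv 0\pmod{2^{k}}$ and $\vec{v}\in\im(M-I)$) and $S_{2}$ (where only $\vec{v}\in\im(M-I)$ is required) --- and showing $|S_{2}-S_{1}|/|H_{k}|\to 0$, which pins the limit. Both gaps are fillable, and your final consistency check against $\tfrac{11}{21}=\tfrac{176}{336}$ is a nice touch, but as written the proposal defers rather than proves the two steps on which the theorem actually rests.
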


In the process, we obtain a general result on the image of the arboreal representation:

\begin{theorem}\label{main2}
Let $E$ be an elliptic curve over $\Q$ and let $P$ be a rational point on $E$. Suppose further that the classical Galois representation is surjective, and that $P$ has no rational 2-division points. Then there are only two possibilities for the image of the $2$-adic arboreal representation, up to conjugacy: the entirety of $\agl{k}$, and an index four subgroup that we denote by $H_k$. As a consequence, the density of primes $p$ for which the reduction of $P$ modulo $p$ has odd order is either $\frac{11}{21}$ or $\frac{179}{336}$.
\end{theorem}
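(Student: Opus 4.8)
The plan is to work $2$-adically. Writing $G$ for the image of the $2$-adic arboreal representation inside $\AGL_2(\Z_2) = V \rtimes \GL_2(\Z_2)$, where $V = \Z_2^2$ denotes the translation subgroup, the surjectivity of the classical representation says precisely that $G$ surjects onto the linear quotient $\GL_2(\Z_2)$. I first observe that $G \cap V$ is normal in $G$ and is carried to itself by the conjugation (that is, the standard linear) action of $\GL_2(\Z_2)$, so it is a $\GL_2(\Z_2)$-stable submodule of $\Z_2^2$; since $\GL_2(\Z_2)$ acts transitively on primitive vectors, the only such submodules are $2^j\Z_2^2$ for $j\in\{0,1,2,\dots\}\cup\{\infty\}$. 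As the index $[\,\agl{k} : G_k\,]$ equals $4^{\min(j,k)}$ and stabilises to $4^{\,j}$, the whole theorem reduces to proving that $j\in\{0,1\}$: the value $j=0$ gives the full group, and $j=1$ gives the index-four subgroup $H_k$.

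Next I pin down the reduction $G_1 := G \bmod 2 \subseteq \AGL_2(\F_2)\cong S_4$. The four $2$-division points of $P$ form a torsor under $E[2]$, i.e.\ the four points of the affine plane over $\F_2$ on which $S_4$ acts in its natural degree-four action, and a rational $2$-division point of $P$ is exactly a Galois-fixed point. Since $G_1$ surjects onto $\GL_2(\F_2)\cong S_3$ and $G_1\cap V_1$ is an $S_3$-stable subgroup of $V_1=\F_2^2$ (on whose three nonzero vectors $S_3$ acts transitively), we get $G_1\cap V_1\in\{0,V_1\}$, so $G_1$ is either a point-stabilizer $S_3$ or all of $S_4$. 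The hypothesis that $P$ has no rational $2$-division point forbids a fixed point, hence $G_1=\AGL_2(\F_2)$ and in particular $G_1\cap V_1=V_1$.

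The crux is to exclude $j\ge 2$. Suppose $j\ge 2$; then reducing modulo $4$ gives $G_2:=G\bmod 4$ with $G_2\cap V_2=(2^j\Z_2^2\bmod 4)=0$, so $G_2$ is a complement to $V_2=(\Z/4)^2$ in $\AGL_2(\Z/4)$ and is the graph of a $1$-cocycle $c\colon \GL_2(\Z/4)\to V_2$. Tracing through the definitions, every element of $G_1\cap V_1$ is the mod-$2$ reduction of $(c(M),M)$ for some $M$ in the kernel $K_2:=\ker(\GL_2(\Z/4)\to\GL_2(\F_2))\cong (M_2(\F_2),+)$, whence $G_1\cap V_1=\im(\bar c|_{K_2})$, where $\bar c$ is the reduction of $c$ mod $2$. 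The cocycle identity, together with the fact that $K_2$ acts trivially on $V_2/2V_2$, forces $\bar c|_{K_2}\colon M_2(\F_2)\to\F_2^2$ to be a homomorphism that is $\GL_2(\F_2)$-equivariant for the conjugation action on $M_2(\F_2)$ and the standard action on $\F_2^2$. I then compute $\operatorname{Hom}_{\GL_2(\F_2)}(M_2(\F_2),\F_2^2)$: under conjugation $M_2(\F_2)\cong V_1\otimes V_1$, whose composition factors (by a one-line Brauer-character count over the two $2$-regular classes of $S_3$) are $\mathbf 1, V_1, \mathbf 1$, with the copy of the standard module $V_1$ sitting strictly between the socle $\F_2 I$ and the head $M_2(\F_2)/\mathfrak{sl}_2$. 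Thus $V_1$ lies in neither the head nor the socle, so there is no nonzero equivariant map to $\F_2^2$; hence $\bar c|_{K_2}=0$, giving $G_1\cap V_1=0$ and contradicting $G_1=\AGL_2(\F_2)$. Therefore $j\le 1$.

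Finally, knowing the image is either $\AGL_2(\Z_2)$ ($j=0$) or conjugate to $H_k$ ($j=1$), I read off the density of primes $p$ for which the reduction of $P$ has odd order. Since this density depends only on the image of the arboreal representation, it is computed by applying the Chebotarev Density Theorem across the tower of $2^k$-division fields and summing the proportion of group elements that fix no division point of the appropriate type; for the full group this reproduces the Jones--Rouse value $\tfrac{11}{21}$, while the analogous sum over $H_k$ yields $\tfrac{179}{336}$. I expect the genuine obstacle to be twofold: the equivariant-$\operatorname{Hom}$ vanishing above, which is the structural heart making the dichotomy possible, and the Chebotarev bookkeeping for $H_k$, whose conjugacy-class statistics must be controlled uniformly in $k$ in order to evaluate the limiting density as $\tfrac{179}{336}$.
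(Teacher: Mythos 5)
The step on which everything hinges --- your claim that $\operatorname{Hom}_{\GL_2(\F_2)}\bigl(M_2(\F_2),\F_2^2\bigr)=0$ --- is false, so the exclusion of $j\ge 2$, which is the heart of the dichotomy, does not go through. The filtration $\F_2 I\subset\mathfrak{sl}_2\subset M_2(\F_2)$ does have successive quotients $\mathbf{1},V_1,\mathbf{1}$, but it is not the socle filtration: the middle extension splits. Explicitly, the set $W=\bigl\{0,\,E_{12}+E_{21},\,I+E_{12},\,I+E_{21}\bigr\}$ is stable under conjugation by the generators $\left(\begin{smallmatrix}1&1\\0&1\end{smallmatrix}\right)$ and $\left(\begin{smallmatrix}0&1\\1&0\end{smallmatrix}\right)$ of $\GL_2(\F_2)$, and $\mathfrak{sl}_2(\F_2)=\F_2 I\oplus W$ with $W\cong V_1$. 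This splitting is forced on general grounds: the restriction of $V_1$ to a Sylow $2$-subgroup of $\GL_2(\F_2)\cong S_3$ is free, so $V_1$ is a projective, hence also injective, $\F_2[S_3]$-module, and therefore splits off of any module in which it occurs as a composition factor --- it can never sit ``strictly in the middle''. Dually (via the invariant pairing $\langle A,B\rangle=\tr(AB)$), $V_1$ is also a quotient of $M_2(\F_2)$, so the Hom-space is one-dimensional rather than zero. Consequently equivariance places no constraint on $\bar c|_{K_2}$: it may well be the surjective map, which is exactly what happens for $H_k$ itself. Worse, no argument carried out with mod-$2$ coefficients can close this gap: since $H^1(S_3,\F_2^2)=H^2(S_3,\F_2^2)=0$ (projectivity again), inflation--restriction shows that the nonzero equivariant homomorphism really does extend to an $\F_2^2$-valued cocycle on the whole group. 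The only available obstruction uses that your cocycle takes values in $(\Z/4\Z)^2$: one must show that the nontrivial class of $H^1\bigl(\GL_2(\Z/4\Z),\F_2^2\bigr)$ is not the reduction of a class of $H^1\bigl(\GL_2(\Z/4\Z),(\Z/4\Z)^2\bigr)$, a Bockstein-type lifting obstruction. That statement is true, but it is precisely the finite group-theoretic fact that the paper certifies by machine: $\agl{2}$ contains no subgroup of order $96$ meeting the translation subgroup trivially and surjecting onto $\agl{1}$; equivalently, $H_2$ --- of order $384$, meeting the translations in $2(\Z/4\Z)^2$ --- is the unique proper kinetic subgroup up to conjugacy.

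Beyond this, three further points would still need repair. (i) Your reduction is itself leaky: from $j\ge2$ you conclude $G_2\cap V_2=0$, but $G\bmod 4$ can meet $V_2$ in more than the reduction of $G\cap V$, since an element of $G$ congruent to a translation modulo $4$ need not be a translation; the fix is to take the image of $G$ in $\AGL_2(\Z_2)/4V$ (quotienting only the translation part) rather than in $\AGL_2(\Z/4\Z)$. (ii) ``$j=1$ gives $H_k$'' is asserted, not proved: you must classify, up to conjugacy, the closed subgroups with $G\cap V=2\Z_2^2$ and full linear projection, which amounts to computing $H^1(\GL_2(\Z_2),\F_2^2)\cong\F_2$ and then discarding the split class (whose mod-$2$ image is a point stabilizer $S_3$) using the no-rational-$2$-division-point hypothesis. (iii) The densities $\frac{11}{21}$ and $\frac{179}{336}$ are quoted, not derived; for $H_k$ this is the entire computation of the paper's Section 8, and some such computation is an unavoidable part of the theorem. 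For contrast, the paper reaches the dichotomy by finite group theory: a Frattini-subgroup argument shows every maximal subgroup of $\agl{k}$, $k\ge3$, contains $\ker(\agl{k}\to\agl{3})$, reducing the classification of kinetic subgroups to levels $2$ and $3$, where it is checked computationally.
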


Last, we construct a surface whose rational points correspond to pairs $(E/\Q,P \in E(\Q))$ such that the 2-adic arboreal representation attached to $(E,P)$ has image contained in the subgroup $H_k$. We use this surface to prove the following:

\begin{main3}
There exists a one-parameter family of curve-point pairs $(E,P)$ such that infinitely many pairs in the family have arboreal image conjugate to $H_k$. In particular, infinitely many pairs $(E,P)$ in our family have the property that the density of primes $p$ for which the reduction of $P$ modulo $p$ has odd order is $\frac{179}{336}$.
\end{main3}

\begin{ack}
We would like to acknowledge Sagemath \cite{sage} and Magma \cite{MR1484478}, which we extensively used for computations. We would also like to thank Wake Forest University for providing resources for this research project. Thanks also to Michael Somos for providing input on the structure of the ECHO sequence. We also
wish to thank the anonymous referee for helpful comments.
\end{ack}

\section{Background}
An elliptic curve $E$ is a non-singular cubic curve defined over a field $K$ with a $K$-rational
point on it. Such a curve has an equation of the form $y^{2} + a_{1} xy + a_{3} y = x^{3} + a_{2} x^{2} + a_{4} x + a_{6}$. Let $E(K)$ be the set of points on the projective closure of
$E$ in $\mathbb{P}^{1}(K)$. This set has the structure of an abelian group: if $P, Q \in E(K)$,
and $R = (x,y)$ is the third intersection of the line through $P$ and $Q$ with $E$, then $P+Q = (x,-y-a_{1}x - a_{3})$. We denote the abelian group structure on $E(K)$ additively.

Given an elliptic curve $E/\Q$, we say that $E$ has good reduction at a prime $p$ if
$E/\F_{p}$ is non-singular. Otherwise, we say that $E$ has bad reduction at $p$.

If $P \in E(\C)$, the $m$-division points of $P$ are defined to be the points $Q$ for which $mQ = P$. We let $[m^{-1}]P$ denote the set of such points; this set has $m^{2}$ elements.

If $K/\Q$ is a finite extension, let $\mathcal{O}_{K}$ denote the ring of algebraic integers in $K$. A prime number $p$ ramifies in $K$ if $p \mathcal{O}_{K}$ factors as $\prod_{i=1}^{r} \mathfrak{p}_{i}^{r_{i}}$, where the $\mathfrak{p}_{i}$ are prime ideals of $\mathcal{O}_{K}$ and $r_{i} > 1$ for some $i$. If, in addition, $K/\Q$ is Galois and $p$ is unramified in $K/\Q$, then we define
$\artin{K/\Q}{\mathfrak{p}_{i}}$ to be the unique element $\sigma \in \Gal(K/\Q)$ so that
\[
\sigma(\alpha) \equiv \alpha^{p} \pmod{\mathfrak{p}_{i}}
\]
for all $\alpha \in \mathcal{O}_{K}$. The set $\left\{ \artin{K/\Q}{\mathfrak{p}_{i}} \right\}$ is
a conjugacy class in $\Gal(K/\Q)$ which we denote by $\artin{K/\Q}{p}$. We will use the Chebotarev density theorem to prove Theorem~\ref{main}. This theorem states the following.
\begin{theorem}[\cite{MR3236783}, edition 2, Theorem 8.17, page 153 or \cite{MR2061214}, page 143]
If $C \subseteq \Gal(K/\Q)$ is a conjugacy class, then
\[
\lim_{x \to \infty} \frac{\left|\{ p~\text{prime} : p \leq x, \artin{K/\Q}{p} = C \}\right|}{\pi(x)} = \frac{|C|}{|\Gal(K/\Q)|}.
\]
\end{theorem}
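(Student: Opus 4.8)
The plan is to deduce this density statement from the analytic theory of $L$-functions, via the classical reduction of the general Galois case to abelian (ultimately cyclotomic) extensions. Write $G=\Gal(K/\Q)$, fix a representative $\sigma\in C$, and let $\{\chi_\rho\}$ run over the irreducible complex characters of $G$. The starting point is the orthogonality expansion of the indicator function of $C$, namely $\mathbf{1}_C=\frac{|C|}{|G|}\sum_\rho \overline{\chi_\rho(\sigma)}\,\chi_\rho$, which is immediate from $\langle \mathbf{1}_C,\chi_\rho\rangle=\frac{|C|}{|G|}\overline{\chi_\rho(\sigma)}$. Attaching to each $\rho$ its Artin $L$-function $L(s,\rho)$, whose Euler factors over unramified primes record $\chi_\rho(\mathrm{Frob}_p)$, this expansion turns the count of primes with $\mathrm{Frob}_p=C$ into a weighted sum of the logarithmic derivatives $-\frac{L'}{L}(s,\rho)$ near $s=1$.

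The analytic heart of the matter is the dichotomy between the trivial and nontrivial representations, which I would isolate as a lemma. For the trivial representation $L(s,\mathbf{1})$ is the Riemann zeta function $\zeta(s)$, with a simple pole at $s=1$; this pole produces the main term and, after the orthogonality weighting, the coefficient $\frac{|C|}{|G|}$. For every nontrivial irreducible $\rho$ one needs $L(s,\rho)$ to continue holomorphically to the line $\Re(s)=1$ and to be nonvanishing there, in particular $L(1,\rho)\neq 0$. Granting this, the Wiener--Ikehara Tauberian theorem applied to $-\sum_\rho \overline{\chi_\rho(\sigma)}\frac{L'}{L}(s,\rho)$ shows that only the trivial representation contributes a pole, whence the number of primes $p\le x$ with $\artin{K/\Q}{p}=C$ is asymptotic to $\frac{|C|}{|G|}\,\pi(x)$, which is exactly the claim.

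The genuine obstacle is the holomorphy and nonvanishing of $L(s,\rho)$ at $s=1$ when $G$ is nonabelian, since Artin's holomorphy conjecture is not known unconditionally. I would bypass it with Brauer's induction theorem: each character of $G$ is a $\Z$-linear combination of characters induced from one-dimensional characters of (cyclic) subgroups, so every $L(s,\rho)$ is a ratio of products of abelian $L$-functions attached to intermediate fields. By class field theory these abelian $L$-functions are Hecke --- equivalently Dirichlet --- $L$-functions, for which holomorphy on $\Re(s)\ge 1$ and the crucial nonvanishing $L(1,\chi)\neq 0$ are classical, being exactly the input behind Dirichlet's theorem on primes in arithmetic progressions. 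Equivalently, one can avoid Brauer induction and run Chebotarev's original field-crossing argument: set $H=\langle\sigma\rangle$ and $L=K^{H}$, so $K/L$ is cyclic, reduce to counting degree-one primes of $L$ whose Frobenius in $H$ is $\sigma$, and dispatch that abelian problem by adjoining $\ord(\sigma)$-th roots of unity and invoking Dirichlet. In either route the only hard analytic input is the abelian nonvanishing at $s=1$; the remainder is formal manipulation of Euler products together with the combinatorial bookkeeping that records how the primes of $L$ above each rational prime recover the factor $|C|$.
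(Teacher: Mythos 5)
This statement is the Chebotarev density theorem, which the paper itself does not prove: it is quoted directly from the cited references, so there is no internal proof to compare against. Your outline is a correct sketch of the standard argument given in such references, and both routes you describe are classical: the expansion $\mathbf{1}_C=\tfrac{|C|}{|G|}\sum_\rho \overline{\chi_\rho(\sigma)}\,\chi_\rho$ combined with Artin $L$-functions and a Tauberian theorem on one hand, and Chebotarev's field-crossing reduction to the cyclic/cyclotomic case on the other. Two small imprecisions are worth fixing. First, Brauer's induction theorem produces $\Z$-linear combinations of characters induced from one-dimensional characters of \emph{elementary} subgroups, not merely cyclic ones; induction from cyclic subgroups is Artin's older theorem, which only gives $\Q$-coefficients. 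Either version suffices here, since even a rational power of $L(s,\rho)$ being a product of Hecke $L$-functions is enough to rule out zeros and poles of $L(s,\rho)$ on $\Re(s)=1$, but you should not label the cyclic-subgroup statement as Brauer's theorem. Second, in the field-crossing route the abelian input is not literally Dirichlet's theorem over $\Q$: after passing to $L=K^{\langle\sigma\rangle}$, one needs equidistribution of Frobenius elements of primes of $L$ in $\Gal(L(\zeta_n)/L)$, which rests on the nonvanishing at $s=1$ of Hecke $L$-functions of the number field $L$ (Hecke's generalization of Dirichlet), together with the bookkeeping you allude to that converts degree-one primes of $L$ above $p$ into the factor $|C|/|G|$. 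Neither point alters the structure of your argument; with those adjustments it is exactly the standard proof the paper's references supply, and the application of Wiener--Ikehara is legitimate because $\sum_\rho \overline{\chi_\rho(\sigma)}\chi_\rho(g)=\tfrac{|G|}{|C|}\mathbf{1}_{g\in C}\ge 0$, so the relevant Dirichlet series has nonnegative coefficients up to the harmless contribution of ramified primes and higher prime powers.
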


If $E/\Q$ is an elliptic curve, let $\Q(E[m])$ denote the field obtained by adjoining to $\Q$
all the $x$ and $y$-coordinates of points in $E[m]$, the set of points on $E$ of order dividing $m$. We have $E[m](\C) \cong (\Z/m\Z)^{2}$. Because the group law on $E$ is given by rational functions, it commutes with the action of the Galois group, and as a consequence we have the classical Galois representation $\rho_{E,m} : \Gal(\Q(E[m])/\Q) \to \Aut(E[m]) \cong \GL_{2}(\Z/m\Z)$. This Galois representation has the properties that if $\sigma$ is the Artin symbol of a prime ideal above $p$, then
\[
  \tr~\rho_{E,m}(\sigma) \equiv p+1 - \# E(\F_{p}) \pmod{m}
\]
and $\det \rho_{E,m}(\sigma) \equiv p \pmod{m}$. Moreover, by the Neron-Ogg-Shafarevich criterion \cite[Theorem VII.7.1, page 184]{MR2514094}, $\Q(E[m])/\Q$ is ramified only at primes that divide $m$ and primes for which $E/\F_{p}$ has bad reduction.

If $E/\Q$ is an elliptic curve and $P \in E(\Q)$, the extension $\Q([m^{-1}] P)/\Q$ is Galois, and if $\beta$ is a chosen element of $[m^{-1}] P$, we have an arboreal Galois representation $\omega_{E,m} : \Gal(\Q([m^{-1}] P)/\Q) \to E[m] \rtimes \Aut(E[m])$ given by $\omega_{E,m}(\sigma) = (\sigma(\beta) - \beta, \sigma|_{E[m]})$. Proposition VIII.1.5 on page 193 \cite{MR2514094} implies that $\Q([m^{-1}] P)/\Q$ is ramified
only at primes at which $E$ has bad reduction and primes dividing $m$. We are most interested
in the case that $m = 2^{k}$, $E : y^{2} + y = x^{3} - 3x + 4$, and $P = (4,7)$. This elliptic curve has bad reduction only at $p = 3$ and $p = 5$. We let $K_{k} = \Q([2^{-k} ]P)$ and define
\[
  \omega_{k} : \Gal(K_{k}/\Q) \to \agl{k} := (\Z/2^{k} \Z)^2 \rtimes \gl{k}.
\] 
We represent elements of $\agl{k}$ as pairs $(\vec{v}, M)$ of column vectors $\vec{v}$ and $2 \times 2$ matrices $M$. The group law is given by
\[
  (\vec{v}_{1}, M_{1}) * (\vec{v}_{2}, M_{2}) = (\vec{v}_{1} + M_{1} \vec{v}_{2}, M_{1} M_{2}).
\]

If $E/K$ is an elliptic curve and $K$ is a number field, then for each prime ideal $\mathfrak{p}$ above a prime $p$ of $\mathcal{O}_{K}$ for which $E$ has good reduction, there is a reduction homomorphism $\phi_{\mathfrak{p}} : E(K) \to E(\mathcal{O}_{K}/\mathfrak{p})$. A point $Q$ in $E(K)$ maps to the point at infinity in $E(\mathcal{O}_{K}/\mathfrak{p})$ if and only if the $x$ and $y$-coordinates of $Q$ have negative $\mathfrak{p}$-adic valuation. Another useful
property of these reduction maps is that \cite[Proposition VII.3.1, page 176]{MR2514094} the only points of finite order in $E(K)$ that can be contained in $\ker \phi_{\mathfrak{p}}$ have order a power of $p$.

\section{Connection between the ECHO sequence, $E$, and $P$}\label{sequences}

We first translate the problem of finding the density of primes dividing a term in the sequence into a question about elliptic curves by showing that $p|b_n$ for some $n\geq 0$ if and only if $P=(4,7)\in E(\mathbb{F}_p)$ has odd order.

To prove this, we introduce several lemmas that establish integrality and coprimality conditions on our sequence, then prove the formula established in Lemma~\ref{twoNplusthree}. As a quick remark, one can extend the sequence into negative indices by noting that
$$b_n= \begin{cases}
\frac{b_{n+3}b_{n+1}-b_{n+2}^2}{b_{n+4}}&\textrm{if } n\not \equiv 2\pmod 3,\\
\frac{b_{n+3}b_{n+1}-3b_{n+2}^2}{b_{n+4}}&\textrm{if } n\equiv 2\pmod 3.
\end{cases}$$
In fact, induction shows that the ECHO sequence is symmetric by the relation $b_n=-b_{-(n+1)}$. Thus, the formulas proven here can extend in both directions, and we may use negative indices to establish base cases as needed.

\begin{lemma}\label{periodic}
The denominator of $b_n$ is coprime to 3 and $\lbrace b_n\pmod 3\rbrace$ is periodic.
\end{lemma}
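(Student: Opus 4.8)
The plan is to prove both assertions simultaneously by a single strong induction on $n$, organized around the period-$9$ residue pattern that the initial terms exhibit. Computing $b_0,\dots,b_8$ modulo $3$ gives $(1,1,2,1,0,2,1,2,2)$, and the next several terms confirm $b_n\equiv b_{n+9}\pmod 3$; I would conjecture that this period-$9$ pattern persists and, more precisely, that $v_3(b_n)=0$ for all $n\not\equiv 4\pmod 9$. The two claims of the lemma then reduce to (a) $v_3(b_n)\ge 0$ for every $n$, which says the denominator of $b_n$ is prime to $3$, and (b) the reductions $b_n\bmod 3$ are exactly the stated period-$9$ sequence. Since only the localization $\Z_{(3)}$ is at issue, this is self-contained and does not require full integrality over $\Z$.

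The basic tool is the division-free form of the recurrence, $b_n b_{n-4}=b_{n-1}b_{n-3}-\epsilon_n b_{n-2}^2$ with $\epsilon_n\in\{1,3\}$, which is a genuine identity in $\Z_{(3)}$ and hence may be read modulo any power of $3$ without ever inverting a non-unit. For indices $n\not\equiv 8\pmod 9$, the inductive pattern makes $b_{n-4}$ a $3$-adic unit, so reducing this identity mod $3$ determines $b_n\bmod 3$ uniquely from the four preceding residues. I would then verify, case-by-case over $n\bmod 9$, that the predicted residue is reproduced; this is a short finite computation in $\F_3$.

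The one delicate case, and the main obstacle, is $n\equiv 8\pmod 9$, where $b_{n-4}$ has positive $3$-adic valuation. Here the mod-$3$ identity degenerates to $0\equiv b_{n-1}b_{n-3}-b_{n-2}^2$ and carries no information about $b_n$. Worse, the valuations at the special indices grow: a direct computation gives $v_3(b_4)=1$ but $v_3(b_{13})=2$, so no fixed modulus $3^M$ will render the sequence periodic, and the division at $n\equiv 8\pmod 9$ genuinely must be analyzed $3$-adically. To handle this I would carry an auxiliary induction that tracks, at each index $m\equiv 4\pmod 9$, both the exact valuation $v_3(b_m)$ and the unit part $b_m/3^{v_3(b_m)}\bmod 3$, and prove that in the identity $b_{m+4}\,b_m=b_{m+3}b_{m+1}-b_{m+2}^2$ the numerator $b_{m+3}b_{m+1}-b_{m+2}^2$ has $3$-adic valuation \emph{exactly} equal to $v_3(b_m)$, with unit part congruent to the value forced by the pattern. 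This exact cancellation—equivalently, that the leading $3$-adic terms of $b_{m+3}b_{m+1}$ and $b_{m+2}^2$ agree to order $v_3(b_m)$ but no further—simultaneously yields $3$-integrality of $b_{m+4}$ (numerator valuation at least that of the denominator) and its predicted unit residue (equality). Establishing it is the technical heart of the argument, and the step where the special multiplicative structure of the ECHO sequence is really used.

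Finally, because the induction reproduces the pattern starting from $n=0$, the reductions $\{b_n\bmod 3\}$ are purely periodic of period $9$, not merely eventually periodic; the backward recurrence and the symmetry $b_n=-b_{-(n+1)}$ recorded above extend this to all of $\Z$. I expect the valuation and cancellation analysis at the critical indices $n\equiv 8\pmod 9$ to be the only non-routine part, with everything else amounting to bookkeeping in $\F_3$.
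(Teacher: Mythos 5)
Your framework (the period-9 pattern $(1,1,2,1,0,2,1,2,2)$, working in $\Z_{(3)}$ via the division-free identity, unit denominators at indices $n\not\equiv 8\pmod 9$, and the observation that the valuations at $n\equiv 4\pmod 9$ grow, e.g.\ $v_3(b_4)=1$ but $v_3(b_{13})=2$) is correct and matches the paper's setup, and your handling of the non-critical indices is sound. But at the critical indices $n\equiv 8\pmod 9$ there is a genuine gap: you correctly identify that one must prove $v_3(b_{m+3}b_{m+1}-b_{m+2}^2)=v_3(b_m)$ exactly, with the right unit part, but you give no mechanism for proving it, deferring it as ``the technical heart.'' Moreover, the bookkeeping you propose cannot supply such a mechanism: your auxiliary induction carries only the residues mod $3$ of $b_{m+1},b_{m+2},b_{m+3}$ together with the valuation and unit part of $b_m$, whereas $v_3(b_{m+3}b_{m+1}-b_{m+2}^2)$ depends on $b_{m+1},b_{m+2},b_{m+3}$ modulo $3^{v_3(b_m)+1}$, a precision that (by your own observation) grows without bound. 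Concretely, the triples $(b_{m+1},b_{m+2},b_{m+3})=(2,1,2)$ and $(2,1,5)$ agree mod $3$ yet give $b_{m+3}b_{m+1}-b_{m+2}^2$ equal to $3$ and $9$ respectively; so the data your induction tracks simply does not determine the valuation you need at the next critical index, and the induction as described cannot close.

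The paper fills exactly this hole with an algebraic, rather than valuation-theoretic, cancellation, and this is the idea your proposal is missing. Writing $a=b_n$, $b=b_{n+1}$, $c=b_{n+2}$, $d=b_{n+3}$, $e=b_{n+4}$ for $n\equiv 0\pmod 9$ (so $e$ is the critical term), one composes the recurrences to express $b_{n+8}$ as a rational function with denominator $b^3c^2de$ and numerator $Ke-bd^7+c^2d^6$; the defining relation $ae=bd-c^2$ then gives $-bd^7+c^2d^6=-d^6(bd-c^2)=-aed^6$, so the numerator equals $e(K-ad^6)$ and the factor $e$ cancels \emph{identically}, whatever its valuation. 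This leaves $b_{n+8}=(K-ad^6)/(b^3c^2d)$ with a $3$-adic unit denominator, from which both $3$-integrality and the residue $b_{n+8}\equiv 2\pmod 3$ follow by direct reduction, with no tracking of valuations or unit parts at all. Your ``exact cancellation'' claim is then a corollary of this identity rather than something one can hope to establish by the finite-state induction you describe.
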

\begin{proof}
Write $b_n=\frac{a_n}{c_n}$ where $a_n,c_n\in \mathbb{Z}$ are coprime. Interpret $b_n$ as $b_n=a_nc_n^{-1}\in\mathbb{F}_3$. We claim that for $n\equiv 0\pmod 9$,
$$(b_{n},b_{n+1},b_{n+2}, b_{n+3}, b_{n+4}, b_{n+5}, b_{n+6}, b_{n+7}, b_{n+8})\equiv (1,1,2,1,0,2,1,2,2)\pmod 3.$$

We will use induction to prove this; the base cases are simple to check. Assume that the sequence is periodic for $k<n$, and suppose that $n\equiv 0\pmod 9$. Then the first eight congruences are easily checked.

To calculate $b_{n+8}$, let $a=b_n, b=b_{n+1},c=b_{n+2},d=b_{n+3}$, and $e=b_{n+4}$, and write $b_{n+5}$ through $b_{n+7}$ in terms of $b,c,d,e$. Then writing $b_{n+8}$ in terms of $b,c,d,e$, we have that $$b_{n+8}=\frac{Ke-bd^7+c^2d^6}{b^3c^2de},$$ where $Ke=-9b^3de^4 - 3b^2c^2e^4 + 9b^2cd^2e^3 - 6b^2d^4e^2 + bc^3de^3 - 3bc^2d^3e^2 + 3bcd^5e - c^5e^3 + 3c^4d^2e^2  - 3c^2d^4e$. Then
\begin{align*}
  b_{n+8} &= \frac{Ke-bd^7+c^2d^6}{b^3c^2de}= \frac{Ke-d^6(bd-c^2)}{b^3c^2de}= \frac{K-ad^6}{b^3c^2d}\\&\equiv K-a\equiv K+2\equiv 2\pmod 3,
\end{align*}
hence $b_{n+8}\equiv 2\pmod 3$.
\end{proof}

\begin{lemma}\label{integralSeq}
For $n\geq 0$ we have that $b_n\in \mathbb{Z}$ and $\gcd(b_n,b_{n-3})=\gcd(b_n,b_{n-2})=\gcd(b_n,b_{n-1})=1$.
\end{lemma}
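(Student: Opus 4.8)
The plan is to prove both assertions simultaneously by strong induction on $n$, carrying the joint inductive hypothesis that $b_m \in \Z$ and $\gcd(b_m, b_{m-1}) = \gcd(b_m, b_{m-2}) = \gcd(b_m, b_{m-3}) = 1$ for all $m < n$. The base cases (small $n$, using the initial data $(b_0,b_1,b_2,b_3)=(1,1,2,1)$ and, where an index would be negative, the backward extension and symmetry $b_m=-b_{-(m+1)}$ recorded above) form a finite check. Throughout I write the recurrence as $b_n b_{n-4} = b_{n-1}b_{n-3} - \epsilon_n b_{n-2}^2$, where $\epsilon_n = 3$ if $n \equiv 0 \pmod 3$ and $\epsilon_n = 1$ otherwise.

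The coprimality conclusions are the easier half, and I would dispatch them first, assuming $b_n \in \Z$ has already been shown. The key input is that, by the inductive hypothesis, $b_{n-4}$ is coprime to each of $b_{n-1},b_{n-2},b_{n-3}$: indeed $\gcd(b_{n-4},b_{n-3})$, $\gcd(b_{n-4},b_{n-2})$, and $\gcd(b_{n-4},b_{n-1})$ are exactly the three coprimality relations attached to the indices $n-3$, $n-2$, $n-1$. For $\gcd(b_n,b_{n-2})$, suppose a prime $p$ divides both; then $p \mid b_n b_{n-4}$ and $p \mid \epsilon_n b_{n-2}^2$, so the recurrence forces $p \mid b_{n-1}b_{n-3}$, whence $p \mid b_{n-1}$ or $p \mid b_{n-3}$, each contradicting $\gcd(b_{n-1},b_{n-2})=1$ or $\gcd(b_{n-2},b_{n-3})=1$. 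For $\gcd(b_n,b_{n-1})$ and $\gcd(b_n,b_{n-3})$ the same idea gives $p \mid \epsilon_n b_{n-2}^2$; when $p \neq 3$ this yields $p \mid b_{n-2}$ and a contradiction as before, and the remaining possibility $p=3$ is ruled out by Lemma~\ref{periodic}: its explicit period $(1,1,2,1,0,2,1,2,2)\pmod 3$ shows that $3 \mid b_m$ only for $m \equiv 4 \pmod 9$, and in that class none of $b_{m\pm 1},b_{m\pm 3}$ is divisible by $3$, so $3$ never divides two terms at distance $1$ or $3$.

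The integrality step is the \emph{crux}, and the whole difficulty concentrates in showing $b_{n-4} \mid b_{n-1}b_{n-3} - \epsilon_n b_{n-2}^2$; coprimality of $b_{n-4}$ with $b_{n-3},b_{n-2},b_{n-1}$ controls the factors, not the combination, so it does not suffice on its own. A useful first reduction is to multiply the numerator $N_n := b_{n-1}b_{n-3} - \epsilon_n b_{n-2}^2$ by $b_{n-5}$ and substitute the already-integral relation $b_{n-1}b_{n-5} = b_{n-2}b_{n-4} - \epsilon_{n-1}b_{n-3}^2$, which gives $N_n\,b_{n-5} \equiv -\epsilon_{n-1}b_{n-3}^3 - \epsilon_n b_{n-2}^2 b_{n-5} \pmod{b_{n-4}}$. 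Since $\gcd(b_{n-5},b_{n-4})=1$, the desired divisibility is equivalent to $b_{n-4} \mid \epsilon_{n-1}b_{n-3}^3 + \epsilon_n b_{n-2}^2 b_{n-5}$, a relation purely among earlier terms.

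The main obstacle is closing this congruence: iterating the recurrence relations among the earlier terms only pushes the indices further back (each substitution trades $b_{n-1}$ for $b_{n-5}$, then $b_{n-5}$ for $b_{n-6},b_{n-7},\dots$), so there is no purely formal closure from the recurrence alone, and the divisibility genuinely reflects the special arithmetic of the sequence. I expect the cleanest resolution is to identify $b_n$ with the denominator of the coordinates of $(2n+1)P$ on $E$ — establishing independently, by induction via the group law on $E$, that these denominators satisfy the ECHO recurrence with the same initial data — and then to read off both integrality and the three coprimality conditions from the standard fact that a rational point on a minimal Weierstrass model has the form $(a/d^2,\,c/d^3)$ with $\gcd(a,d)=\gcd(c,d)=1$. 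The alternative, staying within the recurrence, is a direct but intricate induction organized into cases according to $n \bmod 3$ (to track the period-$3$ pattern of the coefficients $\epsilon_n$) together with the mod-$3$ periodicity of Lemma~\ref{periodic}; either way, verifying this divisibility is where essentially all of the work lies.
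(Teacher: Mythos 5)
Your coprimality argument is sound, and it is essentially the argument the paper itself only gestures at (the paper dismisses that half with ``a straightforward argument can be made using Lemma~\ref{periodic}''). The genuine gap is exactly where you located the crux. You assert that ``there is no purely formal closure from the recurrence alone'' and then replace the proof by an unexecuted plan; that assertion is false, and the paper's proof is precisely such a formal closure. The idea you are missing is that the recurrence at index $n-4$ itself supplies a congruence among the \emph{earlier} terms modulo $b_{n-4}$: from $b_{n-4}b_{n-8}=b_{n-5}b_{n-7}-\epsilon_{n-4}b_{n-6}^2$ one gets $b_{n-5}b_{n-7}\equiv \epsilon_{n-4}b_{n-6}^2 \pmod{b_{n-4}}$, so the back-substitution does not recede forever --- it terminates the moment you invoke the relation defining $b_{n-4}$. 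Concretely, write $a=b_{n-7}$, $b=b_{n-6}$, $c=b_{n-5}$, all invertible modulo $b_{n-4}$ by the inductive coprimality hypothesis applied at index $n-4$; successive substitution gives $b_{n-3}$, $b_{n-2}$, $b_{n-1}$ modulo $b_{n-4}$ as Laurent monomials in $a,b,c$, and in the case $n-4\equiv 1\pmod 3$ one finds
\[
b_{n-1}b_{n-3}-b_{n-2}^2 \equiv \frac{-c^5\,(ac-b^2)}{a^3b^2}\equiv 0 \pmod{b_{n-4}},
\]
since $ac-b^2\equiv 0$ is exactly the congruence above. Indeed your own reduction was one substitution short of closing: continuing it yields $\epsilon_{n-1}b_{n-3}^3+\epsilon_n b_{n-2}^2 b_{n-5}\equiv c^6(ac-\epsilon_{n-4}b^2)/(a^3b^2)\equiv 0 \pmod{b_{n-4}}$.

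Your fallback --- identifying $b_n$ with the denominators of the coordinates of $(2n+1)P$ and importing integrality from the curve --- is not a proof as written, and within this paper's logical structure it is backwards: Lemma~\ref{twoNplusthree} is proved \emph{using} the present lemma (integrality is needed even to make sense of the induction there, and coprimality is what shows the coordinates are in lowest terms), so the elliptic-curve identification is an output of this lemma, not an available input. Even taken on its own terms, the plan has unaddressed obstacles: the ECHO sequence takes negative values (e.g.\ $b_4=-3$), so $b_n$ is not literally a denominator and any identification holds only up to sign; proving that the denominator sequence satisfies the specific ECHO recurrence is itself the hard step, comparable in difficulty to the induction you are trying to avoid; and the standard fact that a rational point has the form $(a/d^2,c/d^3)$ with $\gcd(a,d)=\gcd(c,d)=1$ gives coprimality of numerator and denominator of a \emph{single} point, not the coprimality of $b_n$ with $b_{n-1},b_{n-2},b_{n-3}$, which would require a separate kernel-of-reduction argument that is delicate at the bad primes $3$ and $5$.
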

\begin{proof}
We will prove this by induction on the index $n$:

The base cases can be easily checked. Assume for $k<n$ that $b_k\in \mathbb{Z}$ and that $b_k$ is coprime to $b_{k-3},b_{k-2}$, and $b_{k-1}$. Let $b_{n-7}=a,b_{n-6}=b,b_{n-5}=c$. If $n-4\equiv 1\pmod 3:$
$$\begin{array}{cccccccccc}
b_{n-4}b_{n-8}&=&b_{n-5}b_{n-7}&-&b_{n-6}^2&\Rightarrow& ac-b^2&\equiv& 0&\pmod{ b_{n-4}},
\\b_{n-3}b_{n-7}&=&b_{n-4}b_{n-6}&-&b_{n-5}^2&\Rightarrow& b_{n-3}&\equiv& \frac{-c^2}{a}&\pmod{ b_{n-4}},
\\b_{n-2}b_{n-6}&=&b_{n-3}b_{n-5}&-&3b_{n-4}^2&\Rightarrow& b_{n-2}&\equiv& \frac{-c^3}{ab}&\pmod{b_{n-4}},
\\b_{n-1}b_{n-5}&=&b_{n-2}b_{n-4}&-&b_{n-3}^2&\Rightarrow& b_{n-1}&\equiv& \frac{-c^3}{a^2}&\pmod{b_{n-4}}.
\end{array}$$

Note that
$$b_{n-1}b_{n-3}-b_{n-2}^2\equiv \frac{-c^3}{a^2}\cdot \frac{-c^2}{a}-\frac{c^6}{a^2b^2}\equiv\frac{c^5b^2-c^6a}{a^3b^2}\equiv \frac{-c^5(ac-b^2)}{a^3b^2}\pmod {b_{n-4}},$$
hence $b_{n-1}b_{n-3}-b_{n-2}^2$ is congruent to 0 mod $b_{n-4}$. Therefore, $b_{n-4}|(b_{n-1}b_{n-3}-b_{n-2}^2)$, and so $b_n\in\mathbb{Z}$.

The other two cases can be similarly checked.

A straightforward argument can be made using Lemma ~\ref{periodic} to show that if $p|b_n$, then $p\nmid b_{n-1},p\nmid b_{n-2}$, and $p\nmid b_{n-3}$.
\end{proof}

We now turn to connecting the sequence to our elliptic curve. One preliminary is required.

\begin{lemma}\label{h}
For $n\geq 0$, define
$$h(n)=\begin{cases}
b_{n-3}^2b_{n}^2 + b_{n-3}b_{n-1}^3 + 3b_{n-2}^3b_{n} - 3b_{n-2}^2b_{n-1}^2 & \mathrm{if}~n\equiv 0\pmod 3,\\
3b_{n-3}^2b_{n}^2 + b_{n-3}b_{n-1}^3 + b_{n-2}^3b_{n} - b_{n-2}^2b_{n-1}^2 &  \mathrm{if}~n\equiv 1\pmod 3,\\
b_{n-3}^2b_{n}^2 + 3b_{n-3}b_{n-1}^3 + b_{n-2}^3b_{n} - 3b_{n-2}^2b_{n-1}^2 & \mathrm{if}~n\equiv 2\pmod 3.
\end{cases}$$
Then $h(n)=0$ for all $n\in\mathbb{N}$.
\end{lemma}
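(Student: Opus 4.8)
\section*{Proof proposal}

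The plan is to prove $h(n)=0$ by induction on $n$, using a clean first-order relation that ties $h(n)$ to $h(n-1)$. Concretely, I claim that for every $n \geq 4$,
\[
b_{n-4}\,h(n) = b_n\,h(n-1).
\]
Since every $b_k$ is a nonzero integer by Lemma~\ref{integralSeq}, this relation shows that whenever $h(n-1)=0$ we get $h(n)=0$; so the whole lemma follows from a finite check of base cases together with this single propagation step.

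To establish the relation I would substitute the defining recurrence for $b_n$ directly into the expression for $h(n)$. Writing $w=b_{n-4},\ x=b_{n-3},\ y=b_{n-2},\ z=b_{n-1}$, the recurrence gives $b_n = (xz - c\,y^2)/w$, where $c=3$ if $n\equiv 0\pmod 3$ and $c=1$ otherwise. Since $h$ is homogeneous of degree $4$ in the four consecutive sequence terms it involves, multiplying $h(n)$ through by $w^2$ to clear the denominator produces a homogeneous degree-$6$ polynomial in $w,x,y,z$. The goal is to show this polynomial factors as $(xz - c\,y^2)\,h(n-1) = (b_n w)\,h(n-1)$; dividing by $w$ then yields the claimed identity. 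The verification naturally splits into three cases according to $n \bmod 3$, because both the coefficient $c$ in the recurrence and the shape of the formula for $h$ depend on the residue, and, crucially, $h(n-1)$ is governed by the residue of $n-1$. Thus the cases pair the ``$\equiv 0$'' form of $h(n)$ with the ``$\equiv 2$'' form of $h(n-1)$, and so on. In each case $h(n-1)$, after the substitution $b_{m-3}=w,\ b_{m-2}=x,\ b_{m-1}=y,\ b_m=z$ with $m=n-1$, is a degree-$4$ polynomial in $w,x,y,z$, and one checks uniformly that $w^2 h(n) - xz\,h(n-1)$ collapses to exactly $-c\,y^2\,h(n-1)$, giving the factorization $w^2 h(n) = (xz - c\,y^2)\,h(n-1)$.

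Finally, I would dispatch the base cases $h(0),h(1),h(2),h(3)$ by direct substitution of the initial terms, using the symmetry $b_n = -b_{-(n+1)}$ to supply the negative-index values $b_{-1},b_{-2},b_{-3}$ needed to evaluate $h(0)$, $h(1)$, and $h(2)$; each evaluates to $0$. The induction then propagates forward: for $n\geq 4$ we have $h(n-1)=0$ (as $n-1\geq 3$), so the displayed relation forces $h(n)=0$. I expect the only real obstacle to be discovering and verifying the factorization $w^2 h(n) = (xz - c\,y^2)\,h(n-1)$. Once its form is guessed, confirming it is a finite, if slightly tedious, polynomial computation across the three residue classes; the content of the argument lies in finding the correct pairing of residues and the correct multiplier $b_n/b_{n-4}$.
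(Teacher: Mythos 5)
Your proposal is correct and is essentially the paper's own argument: the paper likewise establishes the propagation identity $h(n)=\frac{b_n}{b_{n-4}}\,h(n-1)$ by substituting the recurrence for $b_n$ (carrying out the $n\equiv 1\pmod 3$ case explicitly and noting the other residue classes reduce the same way), then concludes by induction from directly verified base cases. The only cosmetic difference is bookkeeping of base cases (the paper checks $n=1$ and relies on the negative-index extension, while you check $h(0),\dots,h(3)$), which does not change the substance.
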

\begin{proof} We proceed by induction. The base case of $n=1$ is easy to verify by computation.

Now suppose that the claim is true for $k<n$. If $n\equiv 1\pmod 3$, then
\begin{align*}
h(n)&=3b_{n-3}^2b_{n}^2 + b_{n-3}b_{n-1}^3 + b_{n-2}^3b_{n} - b_{n-2}^2b_{n-1}^2\\
&=b_n(b_{n-2}^3+3b_{n-3}^2b_n)+b_{n-1}^2(b_{n-3}b_{n-1}-b_{n-2}^2)\\
&=b_n\left(b_{n-2}^3+3b_{n-3}^2\cdot\dfrac{b_{n-3}b_{n-1}-b_{n-2}^2}{b_{n-4}}\right)+b_{n-1}^2b_{n-4}\cdot \dfrac{(b_{n-3}b_{n-1}-b_{n-2}^2)}{b_{n-4}}\\
&=b_n\left(b_{n-2}^3+3b_{n-3}^2\cdot\dfrac{b_{n-3}b_{n-1}-b_{n-2}^2}{b_{n-4}}\right)+b_{n-1}^2b_{n-4}b_n\\
&=\dfrac{b_n}{b_{n-4}}\cdot (b_{n-4}^2b_{n-1}^2 + b_{n-4}b_{n-2}^3 + 3b_{n-3}^3b_{n-1}-3b_{n-3}^2b_{n-2}^2)\\
&=\dfrac{b_n}{b_{n-4}}\cdot h(n-1).\end{align*}

The equations for cases $n\equiv 1,2\pmod 3$ similarly reduce, and in general,
$h(n)=\dfrac{b_n}{b_{n-4}}\cdot h(n-1).$ Therefore, by induction, $h(n)=0$ for all $n\geq 0$.
\end{proof}

This suffices to prove the following.
\begin{lemma}\label{twoNplusthree}
Define $P = (4,7)$ on E : $y^{2} + y = x^{3} - 3x + 4$. Then for $n\geq 0$,
$$(2n+1)P=\left(\frac{g(n)}{b_n^2},\frac{f(n)}{b_n^3}\right),$$
where $g(n)=2b_n^2-b_{n-3}b_{n+3}$, and
$$f(n)=\begin{cases}
b_n^3+3b_{n-1}^2b_{n+2}&\mathrm{if}~n\equiv 0\pmod 3,\\
b_n^3+b_{n-1}^2b_{n+2}&\mathrm{if}~n\equiv 1\pmod 3,\\
b_n^3+9b_{n-1}^2b_{n+2}&\mathrm{if}~n\equiv 2\pmod 3.
\end{cases}$$
Furthermore, each coordinate in this expression is in reduced form. \end{lemma}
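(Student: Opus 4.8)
The plan is to prove the formula for $(2n+1)P$ by induction on $n$, using the elliptic curve addition law together with the algebraic identity $h(n)=0$ from Lemma~\ref{h}. First I would verify the base cases directly: for small $n$ (say $n=0,1,2$) one computes $(2n+1)P$ explicitly on $E$ using the group law and checks that the coordinates agree with the stated $(g(n)/b_n^2, f(n)/b_n^3)$. Since the sequence $\{b_n\}$ satisfies different recurrences depending on $n \bmod 3$, and both $g$ and $f$ are defined in a congruence-dependent way, I expect to need base cases covering all three residues, and possibly to use the negative-index extension $b_n=-b_{-(n+1)}$ remarked upon earlier to get enough starting data.

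For the inductive step, I would assume the formula holds for $(2n+1)P$ and derive the formula for $(2n+3)P = (2n+1)P + 2P$, or more symmetrically, relate $(2(n+1)+1)P$ to $(2n+1)P$. The cleanest approach is to compute the coordinates of $(2n+3)P$ via the addition law $(2n+1)P + 2P$, where $2P$ is a fixed explicit point on $E$. The addition formula expresses the new $x$- and $y$-coordinates as rational functions of the old coordinates; substituting in the inductive hypothesis $\left(\frac{g(n)}{b_n^2}, \frac{f(n)}{b_n^3}\right)$ and clearing denominators, I would need to show that the resulting rational expressions simplify to $\left(\frac{g(n+1)}{b_{n+1}^2}, \frac{f(n+1)}{b_{n+1}^3}\right)$. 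This simplification is where the recurrence defining $\{b_n\}$ and the vanishing identity $h(n)=0$ must be invoked repeatedly to rewrite products of sequence terms and cancel factors.

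The main obstacle will be the algebraic bookkeeping in the inductive step: after applying the addition law the numerators and denominators are high-degree polynomials in the $b_{n+j}$, and proving they reduce to the claimed closed forms requires systematically applying the recurrence (in its three congruence cases) and the identity $h(n)=0$ to force the needed cancellations. In particular, verifying that the denominator collapses to exactly $b_{n+1}^2$ (respectively $b_{n+1}^3$) rather than a larger expression is the crux, and this is precisely what Lemma~\ref{h} is engineered to supply. I anticipate that the three residue classes $n \equiv 0,1,2 \pmod 3$ each require a separate but parallel computation, reflecting the case split in the definitions of $f$ and in the recurrence itself.

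Finally, for the claim that each coordinate is already in reduced form, I would invoke Lemma~\ref{integralSeq}, which gives $b_n \in \mathbb{Z}$ together with the coprimality $\gcd(b_n, b_{n\pm j})=1$ for small $j$. To conclude $\gcd(g(n), b_n^2)=1$ and $\gcd(f(n), b_n^3)=1$, it suffices to show that any prime $p \mid b_n$ divides neither $g(n)$ nor $f(n)$; reducing the formulas for $g$ and $f$ modulo such a $p$ and using that $b_n \equiv 0$ while $b_{n-1}, b_{n-2}, b_{n-3}$ are units mod $p$ (by the coprimality in Lemma~\ref{integralSeq}) should leave a nonzero residue. This last part is comparatively routine once the main formula is established.
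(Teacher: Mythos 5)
Your main induction is essentially identical to the paper's argument: the same base cases $n=0,1,2$, the same inductive step $(2(k+1)+1)P=(2k+1)P+2P$ with $2P=(1,1)$, the same use of the chord-and-tangent addition law, and the same mechanism of substituting the recurrence so everything is written in terms of $b_{k-3},\dots,b_{k}$ and then exhibiting $h(k)$ from Lemma~\ref{h} as a factor of the resulting numerators (the paper performs that factorization with Magma, just as you anticipate heavy algebraic bookkeeping). So the body of your proof matches the paper's.

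The one genuine gap is in your reduced-form argument, specifically for the $y$-coordinate at the prime $p=3$. You claim that reducing $f(n)$ modulo any prime $p\mid b_n$ ``should leave a nonzero residue'' because the neighboring terms are units mod $p$. But $f(n)=b_n^3+3b_{n-1}^2b_{n+2}$ when $n\equiv 0\pmod 3$ and $f(n)=b_n^3+9b_{n-1}^2b_{n+2}$ when $n\equiv 2\pmod 3$; if $3\mid b_n$ in either of those cases, then $f(n)\equiv 0\pmod 3$ no matter what coprimality holds, and the step fails. The claim is rescued only by an input you never invoke: by Lemma~\ref{periodic} the sequence mod $3$ is periodic with $b_m\equiv 0\pmod 3$ exactly when $m\equiv 4\pmod 9$, so $3\mid b_n$ forces $n\equiv 1\pmod 3$, which is precisely the case where the coefficient in $f(n)$ is $1$ and your unit argument goes through. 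The paper sidesteps this issue entirely: it checks only that the $x$-coordinate $g(n)/b_n^2$ is reduced (immediate from $\gcd(b_n,b_{n\pm 3})=1$, since $g$ has no stray factor of $3$), and then concludes that the $y$-coordinate is automatically reduced because a rational point on an integral Weierstrass model has coordinates $x=a/d^2$, $y=b/d^3$ in lowest terms with the \emph{same} $d$. Either fix --- adding the mod-$3$ periodicity input, or switching to the paper's denominator-structure argument --- closes the gap.
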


As a consequence of Lemma ~\ref{twoNplusthree}, the proof of the following is easily obtained.

\begin{corollary}\label{seqAndOddOrder}
Suppose $p$ is a prime for which $E(\mathbb{F}_p)$ has good reduction. Then $p|b_n$ for some $n\geq 0$ if and only if $P\in E(\mathbb{F}_p)$ has odd order.
\end{corollary}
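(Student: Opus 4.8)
The plan is to convert the divisibility question into a statement about the reduction of $P$ using the explicit formula of Lemma~\ref{twoNplusthree}. Since $p$ is a prime of good reduction, the reduction map $\phi_{p}\colon E(\Q)\to E(\F_{p})$ is a group homomorphism, and the integral point $P=(4,7)$ reduces to a well-defined point $\overline{P}\in E(\F_{p})$. The heart of the argument is the local equivalence that, for each fixed $n\ge 0$,
\[
p\mid b_{n}\quad\Longleftrightarrow\quad \phi_{p}\bigl((2n+1)P\bigr)=O,
\]
where $O$ denotes the point at infinity.

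To establish this local equivalence I would invoke the background fact that a point of $E(\Q)$ reduces to $O$ at $p$ if and only if both of its coordinates have negative $p$-adic valuation. By Lemma~\ref{twoNplusthree} the coordinates of $(2n+1)P$ are $g(n)/b_{n}^{2}$ and $f(n)/b_{n}^{3}$, and these fractions are in reduced form. If $p\mid b_{n}$, then coprimality forces $p\nmid g(n)$ and $p\nmid f(n)$, so both coordinates have negative valuation and the point reduces to $O$. Conversely, if either coordinate has negative valuation, then since $\gcd(g(n),b_{n})=\gcd(f(n),b_{n})=1$ and the numerators are integers, the prime $p$ must divide the denominator $b_{n}$.

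Finally I would combine this with the homomorphism property $\phi_{p}((2n+1)P)=(2n+1)\overline{P}$. Thus $p\mid b_{n}$ holds for some $n\ge 0$ if and only if $(2n+1)\overline{P}=O$ for some $n\ge 0$, i.e.\ if and only if $\ord(\overline{P})$ divides some odd integer. Since every odd positive integer is of the form $2n+1$ with $n\ge 0$, such an $n$ exists exactly when $\ord(\overline{P})$ is odd, which is the desired conclusion.

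The argument is essentially formal once Lemma~\ref{twoNplusthree} is in hand, so I do not expect a serious obstacle. The only points requiring care are making sure the reduced-form hypothesis is genuinely used in both directions of the local equivalence, and the elementary but crucial observation that a point of even order has no odd multiple equal to the identity, so that divisibility of $\ord(\overline{P})$ by some odd integer is equivalent to $\ord(\overline{P})$ being odd.
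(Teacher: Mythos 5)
Your proposal is correct and takes essentially the approach the paper intends: the paper states only that the corollary ``is easily obtained'' from Lemma~\ref{twoNplusthree}, and your argument---using the reduced form of the coordinates $g(n)/b_n^2$ and $f(n)/b_n^3$, the negative-valuation criterion for a point to reduce to the identity, and the homomorphism property of reduction to pass from $p\mid b_n$ to $(2n+1)\overline{P}=O$ and then to oddness of $\ord(\overline{P})$---is precisely the natural fleshing-out of that claim. The only blemish is the phrasing ``divisibility of $\ord(\overline{P})$ by some odd integer'' in your closing remark, which inverts what you actually (correctly) proved, namely that $\ord(\overline{P})$ divides some odd integer.
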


The proof of Lemma~\ref{twoNplusthree} is as follows.

\begin{proof}
Proceed by induction. For $n=0,1,2$, $(2\cdot 0+1)P=(4,7)$, $(2\cdot 1 + 1)P=(-1,2)$, and $(2\cdot 2+1)P=(\frac{1}{4},\frac{-19}{8})$; thus the base cases are true.

Induction Hypothesis: Suppose that the claim is true for all $n\leq k\in\mathbb{N}$. Thus, 
$$(2k+1)P=\left(\dfrac{2b_{k}^2-b_{k-3}b_{k+3}}{b_{k}^2},\dfrac{f(k)}{b_{k}^3}\right).$$
This, along with $2P=(1,1)$, implies that
$$(2(k+1)+1)P=(2k+1)P+2P=\left(\dfrac{2b_{k}^2-b_{k-3}b_{k+3}}{b_{k}^2},\dfrac{f(k)}{b_{k}^3}\right)+(1,1).$$
By substituting the equation of the line in for $y$ and examining the $x^2$ coefficient, the $x$-coordinate of $(2(k+1)+1)P$ is $\left(\dfrac{\beta-1}{\alpha-1}\right)^2-1-\alpha$ for $\alpha=\dfrac{2b_{k}^2-b_{k-3}b_{k+3}}{b_{k}^2}$ and $\beta=\dfrac{f(k)}{b_{k}^3}$. Let $\alpha'=\dfrac{2b_{k+1}^2-b_{k-2}b_{k+4}}{b_{k+1}^2}$; we hope to show that $\alpha'$ is the $x$-coordinate of $(2(k+1)+1)P$. In doing this, it suffices to show that
\begin{equation}\label{one}\left(\dfrac{\beta-1}{\alpha-1}\right)^2-1-\alpha-\alpha'=0.\end{equation}
Let the expression on the left be called expression (\ref{one}).

Using our recursive definition for $\{b_n\}$, we may substitute higher valued sequence points in terms of lower points in (\ref{one}) so that (\ref{one}) is written as a rational fraction in terms of only the four sequence points $b_{k-3}$ through $b_{k}$. Magma explicitly tells us that $h(k)=0$ is a factor of the numerator (for the same $h(n)$ defined in Lemma ~\ref{h}). Therefore, $(\ref{one})=0$, and so the $x$-coordinate is $\alpha'=\dfrac{2b_{k+1}^2-b_{k-2}b_{k+4}}{b_{k+1}^2}$.

Next, because $E:y^2+y=x^3-3x+4$, the addition of the two points is the third intersection point reflected over the line $y=-\frac{1}{2}$. This action sends any point $(\gamma,\delta)$ to $(\gamma,-\delta-1)$.

The third point is given by substitution of the newly found $x=\alpha'$. Thus, using the line equation it suffices to show that $$-\dfrac{\beta-1}{\alpha-1}(\alpha'-1)-2=\dfrac{f(k+1)}{b_{k+1}^3}$$
\begin{equation}\label{two}
\Leftrightarrow \dfrac{\dfrac{f(k)}{b_{k}^3}-1}{\dfrac{2b_{k}^2-\cdot b_{k-3}b_{k+3}}{b_{k}^2}-1}\cdot\left(\dfrac{2b_{k+1}^2-b_{k-2}b_{l+4}}{b_{k+1}^2}-1\right)+2+\dfrac{f(k+1)}{b_{k+1}^3}=0.
\end{equation}

Call the left hand side expression of the above (\ref{two}). In similar fashion to finding the $x$-coordinate, (\ref{two}) may be re-written with only terms $b_{k-3}$ through $b_{k}$ by substitution. Magma tells us this has $h(k)$ as a factor of the numerator. Thus, (\ref{two}) is always $0$ and $\dfrac{f(k+1)}{b_{k+1}^3}$ is the $y$-coordinate as desired, which completes the induction. Therefore, $(2n+1)P=\left(\dfrac{g(n)}{b_n^2},\dfrac{f(n)}{b_n^3}\right)$. 

To see that this is in reduced form, note that Lemma ~\ref{integer} implies that $\gcd(b_n,b_{n\pm3})=1$. Thus, the $x$-coordinate $\dfrac{2b_n^2-b_{n-3}b_{n+3}}{b_n^2}$ is already in reduced form, and hence the $y$-coordinate, having $b_n^3$ in its denominator, is also in reduced form.
\end{proof}

\section{Computing $\lim\limits_{x\rightarrow\infty}{\frac{\pi'(x)}{\pi(x)}}$ up to $10^{11}$} 
This connection between the sequence and elliptic curve gives us a way to explicitly compute $\pi'(x)$ for $x<\infty$. We test whether $P$ has odd order for those primes less than $x$ with good reduction.

If $p$ is a prime of good reduction, Proposition 3.5 allows us to determine whether or not $p$ divides a term in the sequence, depending on if $P \in E(\mathbb{F}_{p})$ has odd order. We must also consider the cases where the curve has bad reduction. Since our curve has discriminant $-3^5\cdot 5^2$, the only bad reductions are at $p=3,5$. As a result of Lemma ~\ref{periodic}, $3$ divides a term in our sequence, and so is included in our calculation of $\pi'(x)$. In contrast, we show below that $5$ does not divide any term in the sequence, and so is not included in the computation of  $\pi'(x)$. 

\begin{lemma}\label{periodic5} 
No term in the sequence is divisible by 5.
\end{lemma}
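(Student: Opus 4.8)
The goal is to prove that no term in the ECHO sequence $\{b_n\}$ is divisible by $5$. The most natural approach mirrors exactly the strategy used in Lemma~\ref{periodic} for the prime $3$: I would show that the sequence $\{b_n \pmod 5\}$ is eventually periodic and never hits $0$. The plan is to work in $\F_5$, interpreting each $b_n = a_n c_n^{-1}$ as in Lemma~\ref{periodic}, which is legitimate provided the denominators $c_n$ are coprime to $5$. I would first establish this coprimality, or equivalently track the reduction directly, being careful that the recursion never requires dividing by a term that vanishes mod $5$.

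The key computation is to exhibit an explicit period. First I would compute enough initial terms of $\{b_n \pmod 5\}$ by hand (or cite a machine computation, as the authors do elsewhere) to identify the period length $L$; since the mod-$3$ period had length $9$, I expect a short period mod $5$ as well, likely a divisor of a small number related to the group order $\#E(\F_5)$. Then I would set up an induction exactly as in Lemma~\ref{periodic}: assume the sequence is periodic with the claimed pattern for all indices $k < n$ with $n \equiv 0 \pmod L$, verify the first $L-1$ congruences directly, and then compute the final term $b_{n+L-1}$ by expressing it via the recursion in terms of the four preceding values. The crucial point is that throughout this length-$L$ window, none of the congruence classes appearing as denominators in the recursion is $0 \pmod 5$, so every division makes sense in $\F_5$. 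Once the explicit periodic tuple is established, one simply observes that none of its entries equals $0$, which immediately gives the claim.

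The main obstacle is the same subtlety that Lemma~\ref{periodic} had to finesse: the recursion $b_n = (b_{n-1}b_{n-3} - c\,b_{n-2}^2)/b_{n-4}$ involves a division, so to run the induction mod $5$ I must guarantee that $b_{n-4} \not\equiv 0 \pmod 5$ at each step. If the periodic pattern I find contains a $0$ somewhere, the division by that term four steps later could be ill-defined mod $5$, and the naive reduction argument would break down. I expect the pattern to avoid $0$ entirely (that is precisely the content of the lemma), so this should not actually occur, but verifying it requires care: I would confirm that the claimed period tuple has all nonzero entries and that the index offset between a given term and the term it divides never lands a nonzero term over a zero denominator. Since integrality of the $b_n$ is already guaranteed by Lemma~\ref{integralSeq}, there is no genuine division issue in $\Z$; the only thing to check is that reduction mod $5$ is compatible, which the coprimality of denominators to $5$ (established first) ensures.

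Concretely, the steps in order are: (1) verify that the denominators $c_n$ are coprime to $5$, so reduction mod $5$ is well-defined, paralleling the opening of Lemma~\ref{periodic}; (2) compute the initial segment of $\{b_n \pmod 5\}$ to pin down the periodic tuple and its length $L$; (3) prove periodicity by induction, with the inductive step computing the last term of each block via the recursion, exactly as in Lemma~\ref{periodic}; and (4) read off that no entry of the period is $0 \pmod 5$, concluding that $5 \nmid b_n$ for every $n$. The bookkeeping in step (3) is routine but tedious, and I would lean on an algebraic identity analogous to the factorization $b_{n+8} = (K - a\,d^6)/(b^3 c^2 d)$ used in Lemma~\ref{periodic} to collapse the expression to something manifestly nonzero mod $5$.
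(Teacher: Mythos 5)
Your proposal matches the paper's proof: the paper verifies that none of the first 24 terms is congruent to $0 \pmod 5$, observes that the sequence is periodic modulo $5$ with period $24$, and concludes by induction. Your additional care about the well-definedness of the reduction (ensuring the divisor $b_{n-4}$ is nonzero mod $5$ at each step, which the all-nonzero period tuple guarantees) is a point the paper glosses over but is handled exactly as you describe.
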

\begin{proof}
For the first 24 terms of the sequence, no term of the sequence is congruent to 0 modulo 5. By induction, it can be shown that no element of the sequence is divisible by 5 because the sequence is periodic modulo 5, with period 24. Thus, the desired result follows.
\end{proof}

In order to determine the primes less than $10^{11}$ for which $P \in E(\mathbb{F}_p)$ has odd order, we wrote code using PARI/GP \cite{PARI2} to perform computations on a server with 24 CPUs, each of which were an Intel Xeon E5-2630 2.3 GHz processor. By dividing the task into 24 different processors, we essentially divided $10^{11}$ by 24 and computed the number of primes for which $P \in E(\mathbb{F}_p)$ has odd order for each of the 24 ranges. The completion of all computations required approximately 4 days. Upon determining the primes of odd order, we then calculated means using the equation:
$$\text{mean} = \pi'(x)/\text{number of primes},$$ where $x=10^k$ for $1 \leq k \leq 11$ and $\pi'(x)$ equals the number of primes less than $x$ for which $P \in E(\mathbb{F}_p)$ has odd order.

With knowledge of the prime factorization of the denominator of the fraction above, we made calculations to hypothesize which fraction correctly described the number of all primes for which $P \in E(\mathbb{F}_p)$ has odd order. It was known, after studying the work of Rouse and Jones \cite{MR2640290}, that the prime factorization of the denominator should contain a power of 2 and some factor of 63. Therefore, there were approximately six cases, as the power of 2 varied, and 3 and or 7 were included in the factorization. With the theoretical values for each mean depicting a value close to 0.53273, the mean for the $10^{11}$ case was used to find the numerator of the fraction by multiplying the denominator by 0.532739443. Considering the mean as the probability of there being the computed number of primes for which $P \in E(\mathbb{F}_p)$ has odd order and $1-\text{mean}$ as the probability of there being the number of primes of even order, we computed the standard deviation, variance, and standard error of the mean to determine the certainty of the data, and we used the $z$-scores to better evaluate each fraction that we tested. After studying the $z$-scores for each of the six cases, we determined that the denominator was 336, with a prime factorization of $2^4\cdot 3\cdot 7$. We calculated the numerator to be 179, using the mean of the $10^{11}$ case, and so we predicted the fraction to be $\frac{179}{336}$, yielding a decimal of 0.532738095 with $z$-scores of an absolute value less than 0.7. The most positive $z$-score was 0.582 for the $10^4$ case and the most negative z-score was $-0.674$ for the $10^9$ case. We determined that the same $z$-scores and fraction were the best representation of three of the six different cases. Thus, our data supports the notion that the fraction $\frac{179}{336}$ is an adequate model for the density of primes for which $P \in E(\mathbb{F}_p)$ has odd order.


\section{Relating odd order to the arboreal representation.}

We now define a condition equivalent to $P=(4,7)$ having odd order in $E(\mathbb{F}_p)$ for a prime integer $p$. This will relate the current problem to arboreal Galois representations, which gives us more tools to work with. From now on, we consider only primes not equal to $2$, $3$, or $5$, which are the ramified primes or primes of bad reduction.

We fix the following terminology. Let $\{ \beta_{k(i)} \}$ be the set of elements of $E(\mathbb{C})$ for which $2^k \cdot \beta_{k(i)} = P$ for $i \in \{ 1,\ldots,4^k \}$. Let $K_k$ be field obtained by adjoining to $\mathbb{Q}$ all the $x$ and $y$ coordinates of such points $\{ \beta_{k(i)} \}$. Also, let $x(\beta_{k(i)})$ and $y(\beta_{k(i)})$ denote the $x$ and $y$-coordinates of $\beta_{k(i)}$.

\begin{theorem} \label{iff}
Let $p$ be a prime unramified in $K_k$, $(\vec{v},M) \in \omega_k\left(\left[ \frac{K_k/\mathbb{Q}}{p} \right]\right)$, and $\det(M-I) \not\equiv 0 \pmod{2^k}$. Then the point $P$ has odd order in $E(\mathbb{F}_p)$ if and only if $\vec{v}$ is in the column space of $(M-I)$.
\end{theorem}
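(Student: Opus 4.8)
The plan is to prove the theorem by passing through the intermediate condition that $P \in 2^k E(\F_p)$, i.e.\ that (the reduction of) $P$ is divisible by $2^k$ in $E(\F_p)$. I would split the argument into two links: (i) under the determinant hypothesis, $P$ has odd order in $E(\F_p)$ if and only if $P \in 2^k E(\F_p)$; and (ii) $P \in 2^k E(\F_p)$ if and only if $\vec v$ lies in the column space of $M-I$. The role of the hypothesis $\det(M-I) \not\equiv 0 \pmod{2^k}$ is precisely to force the "odd order" condition and the "$2^k$-divisible" condition to coincide.

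For link (i), I would first record the identity $\det(M-I) \equiv \#E(\F_p) \pmod{2^k}$. This follows from the elementary relation $\det(M-I) = 1 - \tr(M) + \det(M)$ for a $2\times 2$ matrix, together with the standard congruences $\tr\rho_{E,2^k}(\sigma) \equiv p+1-\#E(\F_p)$ and $\det\rho_{E,2^k}(\sigma) \equiv p \pmod{2^k}$ recalled in Section~2, where $M = \rho_{E,2^k}(\sigma)$ is the linear part of $\omega_k(\sigma)$. Hence $\det(M-I)\not\equiv 0 \pmod{2^k}$ says exactly that the $2$-part of $\#E(\F_p)$ is strictly smaller than $2^k$. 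Writing the finite abelian group $E(\F_p)$ as $G_2 \times G_{\mathrm{odd}}$ with $G_2$ its $2$-Sylow subgroup, this forces $2^k G_2 = 0$, so $2^k E(\F_p) = G_{\mathrm{odd}}$; consequently $P \in 2^k E(\F_p)$ if and only if $P \in G_{\mathrm{odd}}$, i.e.\ $P$ has odd order.

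For link (ii), I would fix the base division point $\beta$ used to define $\omega_k$, so that every $2^k$-division point of $P$ has the form $\beta + T$ with $T \in E[2^k]$, and identify $E[2^k]$ with $(\Z/2^k\Z)^2$ via the basis implicit in $\omega_k$. The crux is a reduction/equivariance step: for a prime $\mathfrak p$ of $K_k$ above $p$ whose Frobenius $\sigma$ realizes $(\vec v, M) = \omega_k(\sigma)$, reduction modulo $\mathfrak p$ is injective on $E[2^k]$ (Proposition~VII.3.1 cited in Section~2, since $p\neq 2$ and $E$ has good reduction at $p$) and commutes with the Galois/Frobenius action; by cardinality it therefore induces a Frobenius-equivariant bijection from the char-$0$ to the char-$p$ $2^k$-division points of $P$. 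From this I would deduce that $P \in 2^k E(\F_p)$ if and only if some $\beta + T$ reduces to an $\F_p$-rational point, and that—because $\sigma(\beta+T)-(\beta+T)$ is a $2^k$-torsion point and so injects under reduction—this reduction is $\F_p$-rational exactly when $\sigma$ fixes $\beta+T$. Translating $\sigma(\beta+T)=\beta+T$ through $\omega_k(\sigma)=(\vec v, M)$ and writing $T \leftrightarrow \vec t$, the fixed-point condition becomes $\vec v + M\vec t = \vec t$, i.e.\ $(M-I)\vec t = -\vec v$. Such a $\vec t$ exists if and only if $\vec v$ lies in the column space of $M-I$ (which is stable under negation), completing the chain of equivalences.

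The main obstacle I anticipate is making the reduction step in link (ii) fully rigorous: one must verify that reduction modulo $\mathfrak p$ genuinely yields a Frobenius-equivariant bijection between the char-$0$ and char-$p$ $2^k$-division points of $P$, so that $\F_p$-rationality of a reduced point is equivalent to $\sigma$ fixing its char-$0$ lift. This rests on injectivity of reduction on prime-to-$p$ torsion and on the compatibility of the Artin symbol with Frobenius modulo $\mathfrak p$. Beyond that, the bookkeeping of translating the geometric fixed-point condition into the affine-linear equation $(M-I)\vec t = -\vec v$ under the semidirect-product law of $\agl{k}$ is where I would be most careful about signs and conventions.
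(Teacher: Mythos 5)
Your proposal is correct and follows essentially the same route as the paper's proof: both arguments rest on the congruence $\det(M-I) \equiv \#E(\F_p) \pmod{2^k}$, on injectivity of reduction modulo $\mathfrak{p}$ on prime-to-$p$ torsion (yielding the Frobenius-equivariant bijection between characteristic-$0$ and characteristic-$p$ $2^k$-division points of $P$), and on translating the fixed-point condition $\sigma(\beta+T)=\beta+T$ into the solvability of $(M-I)\vec{t} = -\vec{v}$. The only difference is organizational: you quarantine the determinant hypothesis into the purely group-theoretic equivalence ``odd order $\Leftrightarrow P \in 2^k E(\F_p)$'' via $2^k E(\F_p) = G_{\mathrm{odd}}$, whereas the paper invokes that hypothesis only at the end of its converse direction; this is a tidier packaging of the same argument.
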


\begin{proof}
($\Rightarrow$) Suppose that $P$ has odd order in $E(\mathbb{F}_p)$ for some prime $p$. Note that an element $a$ in a finite abelian group $G$ has odd order if and only if for any positive integer $k$ there exists an element $\beta_{k}\in G$ such that $2^k \cdot \beta_k = a$. Thus, for every $k$, there exists $\beta_{k(n)} \in E(\mathbb{F}_p)$ such that $2^k \cdot \beta_{k(n)} = P$ for some $n \in \{1, \ldots, 4^k \}$ for each integer $k$. There is a bijection between the $2^k$-division points of $P$ over $\F_p$ and the $2^k$-division points of $P$ over $K_k$, because the difference between any two $2^k$-division points of $P$ is a $2^k$-torsion point and the reduction modulo $p$ map is injective on torsion points with order coprime to the odd prime $p$. Therefore we know we may identify this $\F_p$-point $\beta_{k(n)}$ with one $\beta_{k(n)}$ in $K_k$.

Since the reduction of $\beta_{k(n)}$ is in $E(\mathbb{F}_p)$, it is fixed by the Frobenius automorphism, which corresponds to some element of $\left[ \frac{K_k/ \mathbb{Q}}{p} \right]$. Let $\sigma_p=\left[ \frac{K_k/ \mathbb{Q}}{\mathfrak{p}} \right]$ for some prime ideal $\mathfrak{p}$ over $p$ be such an element. Then $\sigma_p(\beta_{k(n)}) - \beta_{k(n)} \equiv 0 \pmod{\mathfrak{p}}$. But, $\sigma_p(\beta_{k(n)}) - \beta_{k(n)}$ is $2^k$-torsion and the reduction map is injective on torsion coprime to $p$, so in fact, $\sigma_p(\beta_{k(n)}) = \beta_{k(n)}$.

Recall the homomorphism $\omega_k : $ Gal$(K_k/ \mathbb{Q}) \rightarrow \agl{k}$. Define the action of $g=(\vec{v},M) \in \agl{k}$ on $\vec{x} \in (\mathbb{Z} / 2^k \mathbb{Z})^2$ by
$$ g(\vec{x}) = M \cdot \vec{x} + \vec{v}. $$



We fix some element $\beta_{k(m)}\in [2^{k}]^{-1}P$ and then $\omega_{E,2^k}(\sigma)=(\sigma(\beta_{k(m)})-\beta_{k(m)},\sigma|_{E[2^k]})$.
Note that the difference of any $2^k$-division points of $P$ is a $2^k$-torsion point. The set $\{\beta_{k(i)}-\beta_{k(m)}\}$ is thus equal to the set of $2^k$-torsion points, which is isomorphic to $(\mathbb{Z}/2^k\mathbb{Z})^2$. Since we have a bijection between elements $\beta_{k(i)}$ and $\beta_{k(i)}-\beta_{k(m)}$, we have a bijection between $\{\beta_{k(i)}\}$ and $\{\vec{x}\in (\mathbb{Z}/2^k\mathbb{Z})^2\}$. Furthermore, it is easy to check that this bijection respects the group actions involved.

We have $\sigma_p = \left[ \frac{K_k/ \mathbb{Q}}{\mathfrak{p}} \right]$ and $\beta_{k(n)} \in E(\mathbb{C})$ such that $\sigma_p(\beta_{k(n)}) =\beta_{k(n)}$. Let $\omega_{E,2^k}(\sigma_p)=(\vec{v},M)$ and $\vec{x}$ correspond to $\beta_{k(n)}$ by our bijection. Finally, $\sigma_p(\beta_{k(n)})=\beta_{k(n)}$, so $\omega_{E,2^k}(\sigma_p)(\vec{x})=\vec{x}$, and $M\cdot \vec{x}+\vec{v}=\vec{x}$. $(M-I)\vec{x}=-\vec{v}$. Equivalently, $\vec{v}$ is in the column space of $M-I$.\\

($\Leftarrow$) Conversely, suppose there exists an element $(\vec{v},M)$ of the image of $\omega_k$, with $\vec{v}$ in the column space of $M-I$, $\det(M-I) \not \equiv 0 \pmod{2^k}$, and $(\vec{v},M) \in \omega_k\left(\left[ \frac{K_k/\mathbb{Q}}{p} \right]\right)$. Then $(M-I)\vec{x} = -\vec{v}$ for some $\vec{x}$, which implies that $M\vec{x} +\vec{v} = \vec{x}$. Thus, there is a vector $\vec{x}$ fixed by the action of $(\vec{v},M)$.
By the above bijection, we know that the fixed point $\vec{x}$ corresponds to exactly one $\beta_{k(i)}$, and that the pre-image of $(\vec{v},M)$, an element of $\left[ \frac{K_k/\mathbb{Q}}{p} \right]$, fixes it.


Thus, the existence of a solution to the equation $(M-I)\vec{x} = -\vec{v}$ implies that the coordinates of some $\beta_{k(i)}$ are fixed in $K_k$ by $\sigma_p$. By definition of the Artin symbol, this implies that $\sigma_p$ fixes the coordinates of $\beta_{k(i)} \pmod{\mathfrak{p}}$. Since $\sigma_p$ is the Artin Symbol of $\Gal(K_k/\mathbb{Q})$ over a prime ideal, it is in fact the Frobenius automorphism, defined by $\sigma_p(x) \equiv x^p \pmod{\mathfrak{p}}$. Furthermore, the Frobenius automorphism is a generator of $\Gal\left(\frac{\mathcal{O}_{K_k}/ \mathfrak{p}}{\F_p}\right)$. Thus, $\beta_{k(i)}$ is fixed by all of $\Gal\left(\frac{\mathcal{O}_{K_k}/ \mathfrak{p}}{\F_p}\right)$, and we conclude that $\beta_{k(i)}$ is $\F_p$-rational.

Finally, the classical theory of Galois representations tells us that $\det(M-I)$ is equal to the order of $E(\mathbb{F}_p) \pmod{2^k}$. Now $\det(M-I) \not\equiv 0 \pmod{2^k}$, so the order of $E(\mathbb{F}_p)$ is not congruent to $0 \pmod{2^k}$. Recall that if $a$ is an element of a group, $order(na) = \frac{order(a)}{\gcd(order(a),n)}$. Since the order of $\beta_{k(i)}$ divides the order of the group, it follows that $order(2P)=order(2^{k+1}\beta_{k(i)})=order(P)$. We conclude that $P$ has odd order.

Thus, $P$ has odd order in $E(\mathbb{F}_p)$ for an unramified integer prime $p$ precisely when $-\vec{v}$ is in the column space of $(M-I)$, where $(\vec{v},M) \in \im(\omega_k)$, $\det(M-I) \not \equiv 0 \pmod{2^k}$, and $(\vec{v},M) \in \im\left(\left[ \frac{K_k/\mathbb{Q}}{p} \right]\right)$.\

\end{proof}


\section{Kinetic Subgroups of $\agl{k}$}
The work so far suggests that understanding the image of the arboreal representation will help to relate $P$ with fields $\mathbb{F}_p$ for which
$P \in E(\F_{p})$ has odd order. This becomes a group theory question concerning the affine general linear groups. We present a system of subgroups here that help dissect the image. As preliminaries, we use the following terminology.

\begin{definition} Let a subgroup $G\subset \agl{k}$ be called kinetic if both the following projection maps are surjective: $$\pr:G\twoheadrightarrow \gl{k},$$ $$\phi:G\twoheadrightarrow\agl{}.$$
\end{definition}

\begin{definition} A calculation in Magma shows that exactly one proper subgroup $G\subset \agl{2}$ is kinetic, up to conjugacy; let $H_2$ denote a specific representative of the conjugacy class. Let $\varphi_k:\agl{k}\rightarrow\agl{k-1}$ be the canonical projection from $\agl{k}$ down to $\agl{k-1}$ for $k\geq 2$. Then for $k\geq 3$, $H_k$ is recursively defined as 
$$H_k:=\varphi_k^{-1}(H_{k-1}).$$ 
For future use, it should be noted Magma also shows $H_3$ to be the only proper kinetic subgroup of $\agl{3}$ (up to conjugacy).\end{definition}

For reference, a set of generators for $H_2$ are 
$$\left\{\left(\begin{bmatrix}1\\2\end{bmatrix},\begin{bmatrix}2&1\\3&0\end{bmatrix}\right),\left(\begin{bmatrix}3\\3\end{bmatrix},\begin{bmatrix}2&3\\1&3\end{bmatrix}\right)\right\}.$$
Furthermore, $H_2$ is an index $4$, maximal subgroup of $\agl{2}$.\footnote{For computational purposes, it is easier to use a representation of $\AGL_2(\mathbb{Z}/n\mathbb{Z})$ embedded in $\GL_3(\mathbb{Z}/n\mathbb{Z})$ with the bijection $(\vec{v},M)\leftrightarrow\left[\begin{smallmatrix}M&\vec{v}\\0&1\end{smallmatrix}\right]$.}\\

\noindent Note that the kinetic conditions are given by surjectivity of maps. A consequence is that if a group surjects onto a kinetic group, it tends to inherent kinetic conditions. We make the claim that the only kinetic subgroups of $\agl{k}$ for $k\geq 2$ are exactly $H_k$ (up to conjugacy) and the whole group $\agl{k}$ itself. One direction is given here:

\begin{lemma} Both $\agl{k}$ and $H_k$ are kinetic subgroups of $\agl{k}$.\end{lemma}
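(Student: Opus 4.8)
The plan is to handle the two subgroups separately: the claim for $\agl{k}$ is immediate, and the claim for $H_k$ I would prove by induction on $k$, with the base case $k=2$ supplied by the very definition of $H_2$ as a kinetic subgroup. Throughout I would use two standard surjectivity facts about reduction maps, which I would record once at the start: reduction mod $2$ on the vector part $(\Z/2^k\Z)^2\to(\Z/2\Z)^2$ is surjective trivially, and the linear reduction maps $\gl{k}\to\gl{1}$ and $\gl{k}\to\gl{k-1}$ are surjective because any matrix lift of an invertible matrix has odd determinant, hence is itself invertible over $\Z/2^k\Z$.

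For $G=\agl{k}$, surjectivity of $\pr$ is clear since $(\vec{0},M)\in\agl{k}$ for every $M\in\gl{k}$, and surjectivity of $\phi$ follows from the two reduction facts above applied to the vector and matrix parts. So $\agl{k}$ is kinetic with no real work. For $H_k=\varphi_k^{-1}(H_{k-1})$ with $k\geq 3$, I would first observe that $\varphi_k$ is surjective (by the reduction fact for $\gl{k}\to\gl{k-1}$ on the matrix part and the obvious surjection on the vector part), so that $\varphi_k(H_k)=H_{k-1}$. Writing $\phi^{(j)}$ for reduction mod $2$ out of $\agl{j}$, the factorization $\phi^{(k)}=\phi^{(k-1)}\circ\varphi_k$ gives $\phi^{(k)}(H_k)=\phi^{(k-1)}(\varphi_k(H_k))=\phi^{(k-1)}(H_{k-1})=\agl{1}$, the last equality being exactly the inductive hypothesis that $H_{k-1}$ is kinetic. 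Thus $\phi$ is surjective on $H_k$.

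The remaining point, surjectivity of $\pr$ on $H_k$, is where the argument has actual content. Let $\pi_k\colon\gl{k}\to\gl{k-1}$ be reduction mod $2^{k-1}$; then $\pi_k\circ\pr=\pr\circ\varphi_k$ on $\agl{k}$, so $\pi_k(\pr(H_k))=\pr(\varphi_k(H_k))=\pr(H_{k-1})=\gl{k-1}$ by the inductive hypothesis. Since $\pr(H_k)$ is a subgroup of $\gl{k}$ surjecting onto $\gl{k-1}$ under $\pi_k$, it suffices to show it contains $\ker\pi_k$, as then $\pr(H_k)=\pi_k^{-1}(\gl{k-1})=\gl{k}$. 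Now $\ker\pi_k$ consists of the matrices $I+2^{k-1}N$ with $N\in M_2(\Z/2\Z)$, and for each such matrix the element $(\vec{0},\,I+2^{k-1}N)$ reduces under $\varphi_k$ to the identity $(\vec{0},I)\in H_{k-1}$, hence lies in $\varphi_k^{-1}(H_{k-1})=H_k$. Therefore $I+2^{k-1}N\in\pr(H_k)$ for all $N$, giving $\ker\pi_k\subseteq\pr(H_k)$ and completing the inductive step.

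The hard part, and the one place the argument is not purely formal, is this last kernel-capturing step for $\pr$: one must realize the full kernel of $\gl{k}\to\gl{k-1}$ inside $\pr(H_k)$, which succeeds precisely because every matrix congruent to $I$ modulo $2^{k-1}$ can be paired with the zero vector to produce an element of $H_k$, using that $(\vec{0},I)\in H_{k-1}$. Everything else reduces to surjectivity of reduction maps and the fact that a surjection carries the preimage of a subgroup onto that subgroup.
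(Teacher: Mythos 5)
Your proof is correct and takes essentially the same route as the paper's: induction on $k$ with the base case $k=2$ supplied by the definition of $H_2$, deducing both surjectivity conditions from the fact that $H_k$ is the full preimage of $H_{k-1}$ under the surjection $\varphi_k$ (the paper's own proof is a terse two sentences asserting exactly this). Your kernel-capturing step for $\pr$-surjectivity is a valid elaboration, though it can be shortcut: given $M\in\gl{k}$, pick $\vec{v}$ with $(\vec{v},\,M \bmod 2^{k-1})\in H_{k-1}$ and pair $M$ with any lift of $\vec{v}$ to get an element of $H_k$ directly.
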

\begin{proof} It is clear that both maps $\phi:\agl{k}\rightarrow\agl{}$ and $\pr:\agl{k}\rightarrow\gl{k}$ are both always surjective. 

For $H_k$, the claim is true by induction. By definition, $H_2$ was defined to be a proper kinetic subgroup of $\agl{2}$. For all $k>2$, since $H_k$ was defined as the pre-image of $H_{k-1}$, it is clear that both $H_k$ projects onto all of $\gl{k}$ since $H_{k-1}$ has second component intersecting all of $\gl{k-1}$ and also that $H_k$ surjects onto $\agl{}$.\end{proof}

To show the converse, namely, that $H_k$ is the only proper kinetic subgroup of $\agl{k}$, it turns out to be sufficient to examine only the case $k=3$ by Lemma ~\ref{BigLemma}, which will follow from some standard preliminary facts. (Recall that the Frattini subgroup $\Phi(G)$ of group $G$ is the intersection of all maximal subgroups of $G$.)

\begin{proposition}\label{frattini} Let $G$ be a group. Then the following are true, where $\Phi(\cdot)$ denotes the Frattini subgroup.
\begin{itemize}
\item Suppose $N\trianglelefteq G$ is a normal subgroup. Then $\Phi(N)\subset\Phi(G).$
\item Suppose $G$ is a $2$-group. Then for all $g$ in $G$, $g^2\in \Phi(G)$.
\end{itemize}
\end{proposition}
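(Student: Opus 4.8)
The two statements are standard facts about the Frattini subgroup, and the plan is to prove each by combining the description $\Phi(G)=\bigcap\{M : M \text{ maximal in } G\}$ with its equivalent characterization as the set of \emph{non-generators} of $G$ (an element $g$ being a non-generator means that $\langle S,g\rangle=G$ forces $\langle S\rangle=G$; equivalently, $g$ lies in every maximal subgroup). All the groups in play here, such as $\agl{3}$, are finite, so every proper subgroup lies in a maximal one and the two descriptions of $\Phi$ coincide; I would flag this finiteness assumption at the outset.

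For the first bullet I would argue against a fixed maximal subgroup $M\leq G$. Since the Frattini subgroup is characteristic, $\Phi(N)$ is characteristic in $N$, and because $N\trianglelefteq G$ this makes $\Phi(N)\trianglelefteq G$; hence $M\Phi(N)$ is genuinely a subgroup of $G$. Suppose $\Phi(N)\not\subseteq M$. Then $M\Phi(N)$ strictly contains $M$, so by maximality $M\Phi(N)=G$. Intersecting with $N$ and applying Dedekind's modular law (legitimate because $\Phi(N)\subseteq N$) yields $N=(N\cap M)\Phi(N)$. Now I would delete the factor $\Phi(N)$: if $\langle N\cap M\rangle\neq N$ it would sit inside some maximal subgroup of $N$, which also contains $\Phi(N)$, contradicting $N=(N\cap M)\Phi(N)$; so $N=N\cap M$, i.e. $N\subseteq M$ and in particular $\Phi(N)\subseteq M$, contradicting the assumption. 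Thus $\Phi(N)\subseteq M$ for every maximal $M$, and intersecting over all such $M$ gives $\Phi(N)\subseteq\Phi(G)$.

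For the second bullet the key input is that in a finite $2$-group every maximal subgroup $M$ is normal of index $2$: a proper subgroup of a $p$-group is strictly contained in its normalizer, so a maximal subgroup cannot be self-normalizing unless it equals $G$, forcing $M\trianglelefteq G$; maximality then makes $G/M$ a $p$-group with no proper nontrivial subgroup, hence cyclic of order $2$. Consequently $G/M\cong\Z/2\Z$, so for any $g\in G$ the coset $gM$ satisfies $(gM)^2=M$, i.e. $g^2\in M$. Since this holds for every maximal subgroup and $\Phi(G)$ is their intersection, $g^2\in\Phi(G)$.

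The load-bearing steps are the two structural inputs: Dedekind's modular law together with the non-generator deletion in the first part, and the normality/index-$2$ property of maximal subgroups of a $2$-group in the second. Neither is deep, but the first part is the more delicate one, since it silently requires $\Phi(N)\trianglelefteq G$ so that $M\Phi(N)$ is a subgroup; this is exactly where characteristicity of the Frattini subgroup and normality of $N$ enter, and it is the step I would take the most care to justify.
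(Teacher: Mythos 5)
Your proof is correct, but there is nothing in the paper to compare it against: the authors state Proposition 6.4 without proof, presenting both items as ``standard preliminary facts'' (the only accompanying remark is the parenthetical definition of the Frattini subgroup). Your write-up therefore supplies exactly what the paper omits, and it does so via the standard arguments. In the first bullet, the chain of justifications is sound: $\Phi(N)$ is characteristic in $N$, hence normal in $G$, which legitimizes the product $M\Phi(N)$; Dedekind's modular law applies because $\Phi(N)\subseteq N$; and the non-generator deletion step is valid once every proper subgroup of $N$ is contained in a maximal one. In the second bullet, the normalizer-growth property of finite $p$-groups gives normality of maximal subgroups, and the index-$2$ conclusion follows from the correspondence theorem, so $g^2\in M$ for every maximal $M$. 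Your decision to flag finiteness up front is the right instinct rather than pedantry: the non-generator characterization and the existence of enclosing maximal subgroups both require it, and the paper only ever applies the proposition to finite groups (the kernel $N_k\trianglelefteq\agl{k}$, which is a finite $2$-group), so your hypotheses match the paper's actual use.
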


\begin{proposition} We have $|\gl{k}|=6\cdot 16^{k-1}$ and $|\agl{k}|=24\cdot 64^{k-1}$. \end{proposition}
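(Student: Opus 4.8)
The plan is to establish the two cardinalities in turn, deriving the formula for $|\agl{k}|$ from that for $|\gl{k}|$ via the semidirect-product structure. For $|\gl{k}|$ I would argue by induction on $k$, using the reduction-modulo-$2^{k-1}$ homomorphism $\pi:\gl{k}\to\gl{k-1}$. The base case is $\gl{1}=\GL_2(\F_2)$, whose order is $(2^2-1)(2^2-2)=6$: a first column can be any of the $3$ nonzero vectors, and a second column any of the $2$ vectors not in its span. This agrees with $6\cdot 16^{0}=6$.

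The inductive step rests on two facts about $\pi$. First, $\pi$ is surjective: given $\bar M\in\gl{k-1}$, lift its entries arbitrarily to elements of $\ztwok{k}$ to obtain a matrix $M$ with $\det M\equiv\det\bar M\pmod{2^{k-1}}$; since $\det\bar M$ is a unit it is odd, so $\det M$ is odd and hence a unit in $\ztwok{k}$, placing $M\in\gl{k}$. Second, the kernel of $\pi$ consists exactly of the matrices $I+2^{k-1}N$ with $N\in M_2(\Z/2\Z)$, giving $2^4=16$ elements, each automatically invertible because its determinant is $\equiv 1\pmod{2^{k-1}}$ and therefore odd. Thus $|\gl{k}|=16\,|\gl{k-1}|$, and the induction yields $|\gl{k}|=6\cdot 16^{k-1}$.

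For $|\agl{k}|$ I would invoke the defining semidirect decomposition $\agl{k}=\ztwok{k}^2\rtimes\gl{k}$, so that its order is the product $|\ztwok{k}^2|\cdot|\gl{k}|=2^{2k}\cdot 6\cdot 16^{k-1}$. A short exponent count, $2^{2k}\cdot 2^{4(k-1)}\cdot 6 = 6\cdot 2^{6k-4} = 24\cdot 2^{6k-6} = 24\cdot 64^{k-1}$, then gives the claimed value.

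I do not expect any serious obstacle: every step is a standard finite-group count. The only two points requiring care are the surjectivity of $\pi$, which hinges on the observation that units modulo a power of $2$ are precisely the odd residues (so a determinant that is a unit mod $2^{k-1}$ automatically lifts to a unit mod $2^k$), and the correct identification of $\ker\pi$ with a copy of $M_2(\Z/2\Z)$ of size $16$. Both are elementary, so the main work is simply bookkeeping the powers of $2$.
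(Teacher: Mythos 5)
Your proof is correct. The paper gives no argument beyond the remark that $|\gl{k}|$ follows ``by parity arguments'' --- that is, a matrix over $\Z/2^k\Z$ is invertible precisely when its determinant is odd, equivalently when its reduction mod $2$ lies in $\GL_2(\F_2)$ --- and your one-level-at-a-time induction (surjective reduction map with kernel of size $16$, base case $|\GL_2(\F_2)|=6$, then the semidirect-product count for $\agl{k}$) is just a step-by-step packaging of that same parity observation, so you have essentially supplied the proof the paper leaves implicit.
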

\begin{lemma}\label{BigLemma}
Fix $k\geq 3$ and consider $r\in\mathbb{N}$ such that $3\leq r\leq k$. If $\exists(\vec{x},M)\in\agl{k}$ such that $(\vec{x},M)\equiv (\vec{0}, I)\pmod {2^r}$, then $(\vec{x},M)\in\Phi(\agl{k}).$\end{lemma}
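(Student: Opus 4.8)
The plan is to identify the set of elements congruent to $(\vec{0},I)\pmod{2^r}$ with the kernel $N_r$ of the canonical reduction homomorphism $\agl{k}\to\agl{r}$, so that $N_r$ is a \emph{normal} subgroup of $G:=\agl{k}$. The order formula $|\agl{k}|=24\cdot 64^{k-1}$ gives $|N_j|=|\agl{k}|/|\agl{j}|=2^{6(k-j)}$, so each $N_j$ (for $j\ge 1$) is a normal $2$-subgroup of $G$. With this in hand, Proposition~\ref{frattini} reduces the entire lemma to a single structural claim: \emph{every element of $N_r$ is the square of some element of $N_{r-1}$}. Granting this, fix $(\vec{x},M)\in N_r$ and write it as $g^2$ with $g\in N_{r-1}$. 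Since $N_{r-1}$ is a $2$-group, the second bullet of Proposition~\ref{frattini} gives $g^2\in\Phi(N_{r-1})$; since $N_{r-1}\trianglelefteq G$, the first bullet gives $\Phi(N_{r-1})\subseteq\Phi(G)$. Composing, $(\vec{x},M)=g^2\in\Phi(G)$, which is exactly the desired conclusion.

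To establish the structural claim I would first compute squares in $N_{r-1}$ explicitly. Writing a general element of $N_{r-1}$ as $g=(2^{r-1}\vec{a},\,M)$ with $M=I+2^{r-1}B$, the group law gives
\[
g^2=\bigl((I+M)\vec{x},\ M^2\bigr)=\bigl(2^r\vec{a}+2^{2r-2}B\vec{a},\ I+2^rB+2^{2r-2}B^2\bigr),
\]
where $\vec{x}=2^{r-1}\vec{a}$. Because $r\ge 3$ we have $2r-2\ge r+1$, so both non-identity parts are divisible by $2^r$, confirming $g^2\in N_r$, with leading terms $2^r\vec{a}$ and $2^rB$. This is precisely where the hypothesis $r\ge 3$ enters: the quadratic contributions $2^{2r-2}B\vec{a}$ and $2^{2r-2}B^2$ carry a surplus factor $2^{r-2}$ beyond the leading $2^r$.

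The main work, and the step I expect to be the genuine obstacle, is surjectivity of this squaring map onto $N_r$. Given a target $(2^r\vec{c},\,I+2^rD)\in N_r$, I must solve (modulo $2^{k-r}$) the equations
\[
B+2^{r-2}B^2\equiv D\pmod{2^{k-r}},\qquad (I+2^{r-2}B)\vec{a}\equiv\vec{c}\pmod{2^{k-r}}.
\]
Once $B$ is known the vector equation is automatic, since $I+2^{r-2}B\equiv I\pmod 2$ is invertible. For the matrix equation I would use a noncommutative Hensel/successive-approximation argument with $f(B):=B+2^{r-2}B^2$: the congruence $f(B_n+2^nE)\equiv f(B_n)+2^nE\pmod{2^{n+1}}$ holds for $n\ge 1$ because $r-2\ge 1$ pushes the cross term $2^{r-2+n}(B_nE+EB_n)$ and the square term past level $n+1$. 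Thus the linearization has leading coefficient the identity, so starting from $B_0=D$ one can cancel the next $2$-adic digit of the error at each step, and the process terminates modulo $2^{k-r}$. Noncommutativity of matrices causes no trouble precisely because the surplus factor $2^{r-2}$ demotes the $B^2$ term to a higher-order perturbation; for $r=2$ the equation would instead read $B+B^2\equiv D$, whose linearization $E\mapsto E+BE+EB$ is not a contraction and is genuinely obstructed, matching the fact that $N_2\not\subseteq\Phi(G)$ and explaining the threshold $r\ge 3$.
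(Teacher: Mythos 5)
Your proof is correct, and it diverges from the paper's at precisely the step you identified as the main obstacle. The paper shares your setup — the same two Frattini facts, the same computation of $(2^{r-1}\vec{y}, I+2^{r-1}N)^2$, and the same threshold inequality $2(r-1)\geq r+1 \Leftrightarrow r\geq 3$ — but it never proves that squaring maps $N_{r-1}$ \emph{onto} $N_r$. Instead it runs a downward induction on $r$ (base case $r=k$, where the element is the identity): writing the target as $(\vec{x},M)=(2^l\vec{y},I+2^lN)$, it takes the approximate square root $g=(2^{l-1}\vec{y},I+2^{l-1}N)$, observes that $g^2\equiv(\vec{x},M)\pmod{2^{l+1}}$, so that $g^2\cdot(\vec{x},M)^{-1}\in\Phi(\agl{k})$ by the inductive hypothesis, while $g^2\in\Phi(N_k)\subseteq\Phi(\agl{k})$ by the two bullets of Proposition~\ref{frattini}; since $\Phi(\agl{k})$ is a subgroup, closure yields $(\vec{x},M)\in\Phi(\agl{k})$. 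In other words, the paper only needs one extra $2$-adic digit of agreement and lets the induction together with the subgroup property of the Frattini subgroup absorb the error term, so no equation ever has to be solved. Your route proves a cleaner and slightly stronger structural fact — every element of $N_r$ is an honest square from $N_{r-1}$ when $r\geq 3$, established by your Hensel iteration on $f(B)=B+2^{r-2}B^2$ — at the cost of that extra machinery; it also isolates \emph{why} $r\geq 3$ is the right threshold, and your observation that for $r=2$ the matrix part of a square is $I+4(B+B^2)$ with $\tr(B+B^2)\equiv(\tr B)+(\tr B)^2\equiv 0\pmod 2$ gives a genuine obstruction to surjectivity of squaring that the paper's argument never makes visible. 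Both proofs are valid; the paper's is shorter, yours is more self-contained and more explanatory.
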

\begin{proof}
Let $N_k:=\ker(\phi_k)$. Recall that $\phi_k:\agl{k}\twoheadrightarrow\agl{}$ is defined as the natural quotient map. And by the counting formula, 
$$|N_k|=\dfrac{|\agl{k}|}{|\agl{}|}=\dfrac{24\cdot 64^{k-1}}{24}=2^{6(k-1)}.$$
Thus, $N_k$ is a $2$-group.

We proceed by induction on the value $k-r$. For the case $r=k$, it is clear that $(\vec{0},I)\in \agl{k}$ and is in $\Phi(\agl{k})$.

\ih Suppose that for all $r\geq l+1$, the claim is true. Then examine some arbitrary element $(\vec{x}, M)\in \agl{k}$ such that 
$$(\vec{x},M)\equiv (\vec{0},I)\pmod {2^l}.$$
Thus, $(\vec{x},M)=(2^l\vec{y},I+2^{l}N)$ for some $\vec{y}\in(\mathbb{Z}/2^k\mathbb{Z})^2$ and $N\in\gl{k}$.

Note that $(2^{l-1}\vec{y},I+2^{l-1}N)\in \agl{k}$ as well. Then since $l\geq 3\Leftrightarrow 2(l-1)\geq l+1$,
\begin{align*}
(\vec{x},M)^2&=(2^{l-1}\vec{y},I+2^{l-1}N)^2\\
&=(2^{l-1}\vec{y}+2^{l-1}\vec{y}+2^{2(l-1)}N\vec{y},I+2^{l}N+2^{2(l-1)}N^2)\\
&=(2^{l}\vec{y}+2^{2(l-1)}N\vec{y}, I^2+2^{l}N+2^{2(l-1)}N^2)\\
&\equiv (2^l\vec{y},I+2^lN)\pmod {2^{2(l-1)}}\\
&\equiv (2^l\vec{y},I+2^lN)\pmod {2^{l+1}}.\end{align*} Equivalently,
$$(\vec{x},M)^2\cdot(2^l\vec{y},I+2^lN)^{-1}\equiv (\vec{0},I)\pmod {2^{l+1}}.$$
Therefore, by our inductive hypothesis we must have $(\vec{x},M)^2\cdot(2^l\vec{y},I+2^lN)^{-1}\in \Phi(\agl{k})$.

Note that $(2^{l-1}\vec{y},I+2^{l-1}N)^2\in\Phi(N_k)\subset\Phi(\agl{k})$ by proposition ~\ref{frattini}. Thus, closure implies that $(2^l\vec{y},I+2^lN)=(\vec{x},M)\in \Phi(\agl{k})$ as well, and so by induction we are finished.
\end{proof}

Note that any subgroup lies in a maximal subgroup. Because the kinetic conditions come from surjectivity of predefined maps, if a subgroup $H$ of $\agl{k}$ is kinetic, for all groups $G\subset\agl{k}$ containing $H$, $G$ is also kinetic. Thus, to find kinetic subgroups, we may in general examine maximal subgroups.

\begin{corollary}\label{BigCorollary} Suppose $M\subsetneq \agl{k}$ for $k\geq 3$ is a maximal, kinetic subgroup. Then $M$ and $H_k$ are conjugate subgroups. \end{corollary}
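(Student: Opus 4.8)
The plan is to collapse the problem onto the base case $k=3$ using the Frattini containment of Lemma~\ref{BigLemma}. Write $\psi_k := \varphi_4\circ\varphi_5\circ\cdots\circ\varphi_k\colon\agl{k}\to\agl{3}$ for reduction mod $8$; its kernel is precisely the set of $(\vec{x},M)\equiv(\vec{0},I)\pmod{8}$. Applying Lemma~\ref{BigLemma} with $r=3$ shows every such element lies in $\Phi(\agl{k})$, so $\ker\psi_k\subseteq\Phi(\agl{k})$. Since the Frattini subgroup is the intersection of all maximal subgroups and $M$ is maximal, $\ker\psi_k\subseteq\Phi(\agl{k})\subseteq M$. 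A subgroup containing the kernel of a homomorphism equals the full preimage of its image, so $M=\psi_k^{-1}(\psi_k(M))$; in particular $M$ is determined entirely by its image in $\agl{3}$.

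I would then verify that $\psi_k(M)$ is a \emph{proper} \emph{kinetic} subgroup of $\agl{3}$. It is proper: if $\psi_k(M)=\agl{3}$ then $M=\psi_k^{-1}(\agl{3})=\agl{k}$, contradicting properness of $M$. It is kinetic because both defining surjectivity conditions factor through $\psi_k$. The projection $\pr$ commutes with reduction mod $8$, so $\pr(\psi_k(M))$ is the mod-$8$ reduction of $\pr(M)=\gl{k}$, which is all of $\gl{3}$; similarly $\phi\colon\agl{k}\to\agl{}$ factors through $\psi_k$, so the image of $\psi_k(M)$ in $\agl{}$ is $\phi(M)=\agl{}$. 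By the Magma computation recorded with the definition of $H_k$, namely that $H_3$ is the unique proper kinetic subgroup of $\agl{3}$ up to conjugacy, $\psi_k(M)$ must be conjugate to $H_3$.

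It remains to lift this conjugacy from $\agl{3}$ to $\agl{k}$. Choose $g\in\agl{3}$ with $\psi_k(M)=gH_3g^{-1}$ and, using surjectivity of $\psi_k$, a preimage $\tilde g\in\agl{k}$. Because $\ker\psi_k$ is normal, the conjugate $\tilde g^{-1}M\tilde g$ again contains $\ker\psi_k$ and has image $\psi_k(\tilde g^{-1}M\tilde g)=g^{-1}\psi_k(M)g=H_3$; hence $\tilde g^{-1}M\tilde g=\psi_k^{-1}(H_3)$. Finally, unwinding $H_k=\varphi_k^{-1}(H_{k-1})$ gives $H_k=(\varphi_4\circ\cdots\circ\varphi_k)^{-1}(H_3)=\psi_k^{-1}(H_3)$, so $\tilde g^{-1}M\tilde g=H_k$ and $M$ is conjugate to $H_k$. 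The substantive content is entirely front-loaded into Lemma~\ref{BigLemma}: once $\ker\psi_k$ is known to sit inside the Frattini subgroup, the infinite tower of groups collapses to the single finite computation at $\agl{3}$. The only step I expect to require real care is this last conjugacy lift—confirming that conjugating $M$ by $\tilde g$ both preserves containment of $\ker\psi_k$ (using normality) and produces image exactly $H_3$ (using surjectivity of $\psi_k$).
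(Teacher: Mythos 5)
Your proposal is correct and takes essentially the same route as the paper's proof: use Lemma~\ref{BigLemma} to place the kernel of reduction to $\agl{3}$ inside the Frattini subgroup (hence inside the maximal subgroup $M$), pass to the image in $\agl{3}$, invoke the Magma computation that conjugates of $H_3$ are the only proper kinetic subgroups of $\agl{3}$, and pull the conjugacy back to $\agl{k}$. Your handling of the final lift—choosing an explicit preimage $\tilde g$ and deducing the equality $\tilde g^{-1}M\tilde g=\psi_k^{-1}(H_3)=H_k$ directly from the correspondence theorem—is a slightly more careful rendering of the step the paper finishes by citing maximality of $M$, but it is the same argument.
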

\begin{proof}
Let $M\subsetneq \agl{k}$ be a maximal, kinetic subgroup. Examine the map $\varphi:M\rightarrow\agl{3}$ by composition of the $\varphi_i$ maps. By Lemma~\ref{BigLemma}, every maximal subgroup of $\agl{k}$ contains $N_k$, the kernel. By the correspondence theorem, there is a bijection between subgroups containing the kernel and subgroups of the image. Since $\varphi$ is surjective, $M$ is in bijection with a maximal, proper subgroup of $\agl{3}$. Furthermore, it is clear that this subgroup of $\agl{3}$ is also kinetic because these maps preserve surjectivity. 

Recall by computation that conjugates of $H_3$ are the only maximal, kinetic subgroups of $\agl{3}$. Thus, $\varphi(M)=gH_3g^{-1}$, and by definition of the $H_k$ subgroups, $M\subset \varphi^{-1}(g)H_k\varphi^{-1}(g^{-1})$ is a subset of a conjugate of $H_k$. By assumption that $M$ is maximal, $M$ is a conjugate of $H_k$.
\end{proof}

Notice that this corollary followed from the fact that any kinetic maximal subgroup of $\agl{k}$ would have to show up in $\agl{3}$ by Lemma ~\ref{BigLemma}. In fact, we may make a similar statement about maximal subgroups of $H_k$.

We will later show that $|H_k|=6\cdot 64^{k-1}$. Since $H_k$ is kinetic, we have the following surjection:
$$\phi_k:H_k\twoheadrightarrow \agl{}.$$
Thus, by the counting formula we have $$|\ker(\phi_k)|_{H_k}|=\dfrac{|H_k|}{|\agl{}|}=\dfrac{6\cdot64^{k-1}}{24}=2^{6(k-1)-2},$$
and $\ker(\phi_k)|_{H_k}$ is a $2$-group.

\begin{lemma} Suppose $(2^l\vec{v},2^lM)\in H_k$. Then $(2^{l-1}\vec{v},2^{l-1}M)$ is also an element of $H_k$.
\end{lemma}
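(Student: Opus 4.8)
The plan is to induct on $k$, exploiting the recursive definition $H_k=\varphi_k^{-1}(H_{k-1})$ for $k\geq 3$. I read the hypothesis as $g=(2^l\vec v,\,I+2^lM)\in H_k$ and the conclusion as $h=(2^{l-1}\vec v,\,I+2^{l-1}M)\in H_k$, i.e. both elements are congruent to the identity modulo a power of $2$, differing only in the power. The recursion immediately reduces membership by one level: for $k\geq 3$ one has $h\in H_k$ if and only if $\varphi_k(h)\in H_{k-1}$. The base case $k=2$ (and, to clear the first level at which the recursion operates, $k=3$) I would verify directly in Magma, since $H_2$ is given by explicit generators. The key structural fact I would record at the outset is that $\ker\varphi_k\subseteq H_k$ for every $k\geq 3$: this is immediate because $H_k=\varphi_k^{-1}(H_{k-1})$ and the identity lies in $H_{k-1}$. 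In words, any element of $\agl{k}$ that becomes trivial after reduction modulo $2^{k-1}$ automatically lies in $H_k$, and this is what lets the induction close.

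For the inductive step with $k\geq 4$, I apply $\varphi_k$ (reduction modulo $2^{k-1}$) to $g$ and split into two regimes according to how the scaling factor $2^l$ meets the modulus. When $l\leq k-2$, the reduction $\varphi_k(g)=(2^l\bar v,\,I+2^l\bar M)$ is a nontrivial element of $H_{k-1}$ of exactly the same shape, so the inductive hypothesis at level $k-1$ yields $(2^{l-1}\bar v,\,I+2^{l-1}\bar M)=\varphi_k(h)\in H_{k-1}$, whence $h\in H_k$. This is the generic case and amounts to bookkeeping about which coordinates survive reduction.

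The delicate case, and the one I expect to be the main obstacle, is the boundary regime $l=k-1$, where $\varphi_k(g)$ collapses to the identity and the reduction carries no information from the hypothesis. Here I would argue instead directly: $\varphi_k(h)$ is congruent to the identity modulo $2^{k-2}$, hence lies in $\ker\varphi_{k-1}$, and invoking the structural fact above at level $k-1$ (valid precisely because $k-1\geq 3$) gives $\varphi_k(h)\in H_{k-1}$ and therefore $h\in H_k$. The squaring identity used in Lemma~\ref{BigLemma}, namely $(2^{l-1}\vec v,\,I+2^{l-1}M)^2\equiv(2^l\vec v,\,I+2^lM)\pmod{2^{l+1}}$, is the algebraic mechanism tying consecutive layers of the filtration together and can be used to cross-check these reductions; note, however, that on its own it only shows $h^2\in H_k$ rather than $h\in H_k$, so it cannot replace the recursive descent. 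The real burden of the proof thus sits entirely in the boundary case $l=k-1$ together with the small values of $k$ (where $k-1<3$ prevents the kernel-containment fact from applying), both of which must be pinned down by direct computation in $\agl{2}$ and $\agl{3}$, since everything else folds cleanly into the inductive hypothesis.
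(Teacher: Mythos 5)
Your proof is correct and is essentially the paper's own argument: induction on $k$ through the recursive definition $H_k=\varphi_k^{-1}(H_{k-1})$, with Magma handling the small cases and the inductive step consisting of reducing by $\varphi_k$, applying the inductive hypothesis, and lifting back through the preimage. The only real difference is your explicit split at $l=k-1$: the paper applies its inductive hypothesis uniformly to the reduced vector and matrix, and in that degenerate regime the relevant instance of the hypothesis is precisely your kernel-containment fact $\ker\varphi_{k-1}\subseteq H_{k-1}$, so the two treatments coincide.
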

\begin{proof} We proceed by induction on $k$. For the case where $k=2$, a calculation in Magma verifies the claim. 

\ih Suppose that the claim is true for all $k\leq n\in\mathbb{N}$. Then examine an arbitrary $(2^l\vec{v},2^lM)\in H_{n+1}$. By the quotient map $\varphi_{n+1}$, we have $(2^l\vec{v},2^lM)\in H_{n}$. Then by our inductive hypothesis, we know that $(2^{l-1}\vec{v},2^{l-1}M)\in H_n$. By the definition of the $H_k$, since $\varphi_{n+1}((2^{l-1}\vec{v},2^{l-1}M))=(2^{l-1}\vec{v},2^{l-1}M)$, we must have $(2^{l-1}\vec{v},2^{l-1}M)\in H_{n+1}$. Therefore, by induction we are done.\end{proof}

Notice that every step of the proof of Lemma ~\ref{BigLemma} holds for $H_k$ as the main group, and so we similarly have the following corollary.
\begin{corollary}\label{SmallCorollary}For $k\geq 3$, there exists a bijection between maximal, kinetic subgroups of $H_k$ and maximal, kinetic subgroups of $H_3$. 
\end{corollary}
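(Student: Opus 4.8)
The plan is to transfer the argument of Corollary~\ref{BigCorollary} almost verbatim to the group $H_k$, the only genuinely new ingredient being the lemma immediately preceding this corollary, which lets one halve the exponent of a $2$-power deviation from the identity while remaining inside $H_k$. So the first step is to establish the $H_k$-analogue of Lemma~\ref{BigLemma}: for $3 \le r \le k$, every $(\vec{x},M) \in H_k$ with $(\vec{x},M) \equiv (\vec{0},I) \pmod{2^r}$ lies in $\Phi(H_k)$. The induction on $k-r$ from Lemma~\ref{BigLemma} runs unchanged once set inside $H_k$: the base case $r=k$ holds because $(\vec{0},I) \in \Phi(H_k)$, and for the inductive step one writes the element as $(2^l\vec{y},\,I+2^lN)$, uses the preceding halving lemma to produce $g := (2^{l-1}\vec{y},\,I+2^{l-1}N) \in H_k$, and notes that $l-1 \ge 2$ forces $g \in \ker(\phi_k|_{H_k})$. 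Since we have already recorded that $\ker(\phi_k|_{H_k})$ has order $2^{6(k-1)-2}$ and hence is a $2$-group, Proposition~\ref{frattini} gives $g^2 \in \Phi(\ker(\phi_k|_{H_k})) \subseteq \Phi(H_k)$.

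The computation of Lemma~\ref{BigLemma} then applies: because $l \ge 3$ forces $2(l-1) \ge l+1$, one has $g^2 \equiv (\vec{x},M) \pmod{2^{l+1}}$, so $g^2 \cdot (\vec{x},M)^{-1} \equiv (\vec{0},I) \pmod{2^{l+1}}$ lies in $\Phi(H_k)$ by the inductive hypothesis, and closure of $\Phi(H_k)$ forces $(\vec{x},M) \in \Phi(H_k)$. Taking $r=3$ yields $\ker(\varphi) \subseteq \Phi(H_k)$, where $\varphi \colon H_k \twoheadrightarrow H_3$ is the composite of the reductions $\varphi_i$. As the Frattini subgroup lies in every maximal subgroup, every maximal subgroup of $H_k$ contains $\ker(\varphi)$, so the correspondence theorem gives a bijection $M \mapsto \varphi(M)$ between the maximal subgroups of $H_k$ and those of $\varphi(H_k) = H_3$, with inverse $M' \mapsto \varphi^{-1}(M')$.

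It then remains to check that this bijection restricts to one between maximal \emph{kinetic} subgroups on each side. The $\phi$-condition (surjectivity onto $\agl{}$) transfers in both directions at once, since $\phi_k$ factors through $\varphi$ as $\phi_k = (H_3 \twoheadrightarrow \agl{}) \circ \varphi$, so $M \twoheadrightarrow \agl{}$ if and only if $\varphi(M) \twoheadrightarrow \agl{}$. The $\pr$-condition (surjectivity onto $\gl{k}$) transfers forward immediately, because $\pr$ on $H_3$ is the mod-$8$ reduction of $\pr$ on $H_k$. The reverse direction of the $\pr$-condition is the only nonformal point: given a maximal kinetic $M' \subseteq H_3$, one must show $\varphi^{-1}(M') \twoheadrightarrow \gl{k}$. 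Here I would argue that the matrix component of $\ker(\varphi)$ exhausts $\ker(\gl{k} \twoheadrightarrow \gl{3})$---a structural fact about $H_k$ drawn from the halving lemma---so that any subgroup of $H_k$ whose mod-$8$ matrix image is all of $\gl{3}$ (as holds for $\varphi^{-1}(M')$, since $\pr(M') = \gl{3}$) must already have full matrix image $\gl{k}$.

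The main obstacle I anticipate is precisely this last surjectivity bookkeeping: the analogue of Lemma~\ref{BigLemma} transfers mechanically once one knows $\ker(\phi_k|_{H_k})$ is a $2$-group and that the half-step element stays in $H_k$, but confirming that $\varphi^{-1}$ preserves kineticity requires knowing that $\ker(\varphi)$ is large enough in its matrix component to recover all of $\gl{k}$, which rests on the internal structure of $H_k$ encoded by the preceding halving lemma rather than on any new group-theoretic idea.
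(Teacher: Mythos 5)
Your proposal is correct and follows essentially the same route as the paper: establish the $H_k$-analogue of Lemma~\ref{BigLemma} using the halving lemma and the fact that $\ker(\phi_k)|_{H_k}$ is a $2$-group, then apply the correspondence theorem to the projection $\varphi\colon H_k \twoheadrightarrow H_3$. The only point worth noting is that the structural fact you flag as the delicate step—that the matrix component of $\ker(\varphi|_{H_k})$ exhausts $\ker(\gl{k}\twoheadrightarrow\gl{3})$—follows immediately from the definition of $H_k$ as the full preimage of $H_2$ (so $H_k$ contains all of $\ker(\agl{k}\to\agl{3})$), not from the halving lemma; the paper itself glosses over this surjectivity bookkeeping entirely, so your treatment is if anything more careful than the original.
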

\begin{proof}
The natural mapping $\varphi:H_k\rightarrow H_3$ is the same as the map $\varphi:H_k\rightarrow \agl{3}$ because the $\varphi_i$ maps of $H_i$ are contained in $H_{i-1}$. Thus, similarly to Lemma~\ref{BigLemma}, all maximal subgroups of $H_k$ contain the kernel of $\varphi$. Therefore, there is a bijection between maximal kinetic subgroups of $H_k$ and maximal kinetic subgroups of $H_3$ as desired.
\end{proof}

This suffices to show the converse of the original claim.

\begin{theorem}\label{rum}
Let $k\geq 2$. The only kinetic subgroups of $\agl{k}$ are $\agl{k}$ itself and $H_k$.
\end{theorem}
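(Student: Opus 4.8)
The plan is to prove Theorem~\ref{rum} by induction on $k$, reducing the general case to the base cases $k=2$ and $k=3$ that have already been verified by direct Magma computation. The forward direction, that both $\agl{k}$ and $H_k$ are kinetic, is already established in the earlier lemma, so I only need the converse: any kinetic subgroup $G \subseteq \agl{k}$ equals either $H_k$ or the whole group. First I would dispose of the base cases. For $k=2$, the definition of $H_2$ records that it is the unique proper kinetic subgroup of $\agl{2}$ up to conjugacy, so the claim holds there; for $k=3$ the same Magma computation noted after the definition of $H_k$ shows $H_3$ is the only proper kinetic subgroup of $\agl{3}$ up to conjugacy.

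For the inductive step, suppose the result holds for $k-1$ and let $G \subseteq \agl{k}$ be a kinetic subgroup with $k \geq 3$. The key structural tool is Lemma~\ref{BigLemma}, which shows that any element congruent to $(\vec{0},I) \pmod{2^r}$ for $3 \leq r \leq k$ lies in the Frattini subgroup $\Phi(\agl{k})$, and hence in every maximal subgroup. Following the logic of Corollary~\ref{BigCorollary}, I would first note that $G$ is contained in some maximal subgroup $M$ of $\agl{k}$, and since any group containing a kinetic subgroup is itself kinetic (surjectivity of $\pr$ and $\phi$ is inherited upward), $M$ is a maximal kinetic subgroup. If $M = \agl{k}$ then $G$ need not be proper, so I treat that case separately: either $G = \agl{k}$, or $G$ is proper and thus sits inside a proper maximal subgroup. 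In the latter case, Corollary~\ref{BigCorollary} identifies every proper maximal kinetic subgroup of $\agl{k}$ as a conjugate of $H_k$, so $G \subseteq H_k$ up to conjugacy.

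It then remains to show that a kinetic subgroup properly contained in $H_k$ cannot itself be proper in $H_k$, i.e. that $H_k$ has no proper kinetic subgroups, so that $G = H_k$. Here I would run the identical induction internally on $H_k$, using the observation recorded just before Corollary~\ref{SmallCorollary} that every step of Lemma~\ref{BigLemma}'s proof applies verbatim with $H_k$ as the ambient group, together with the preceding lemma showing $(2^l\vec{v},2^lM)\in H_k \Rightarrow (2^{l-1}\vec{v},2^{l-1}M)\in H_k$ (which supplies the closure property making the squaring argument go through). Corollary~\ref{SmallCorollary} then gives a bijection between maximal kinetic subgroups of $H_k$ and those of $H_3$, and the base-case computation shows $H_3$ has no proper kinetic subgroups; hence neither does $H_k$, forcing $G = H_k$.

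The main obstacle I anticipate is the bookkeeping around the two parallel inductions and making the conjugacy statements precise: Corollary~\ref{BigCorollary} gives $G \subseteq gH_kg^{-1}$ rather than equality, and one must carefully combine the maximality of the containing subgroup with the internal rigidity of $H_k$ (no proper kinetic subgroups) to upgrade containment to equality without losing track of the conjugating element $g$. The group-theoretic inputs themselves are all in hand; the delicate part is assembling them so that every kinetic $G$ is pinned down to exactly one of the two conjugacy classes.
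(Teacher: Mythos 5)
Your proposal is correct and follows essentially the same route as the paper: dispose of the base cases via the Magma computations attached to the definitions of $H_2$ and $H_3$, use Corollary~\ref{BigCorollary} to place any proper kinetic subgroup inside a conjugate of $H_k$, and then use Corollary~\ref{SmallCorollary} together with the $H_3$ computation to show $H_k$ has no proper kinetic subgroups, upgrading containment to equality. The only cosmetic difference is your outer induction on $k$, whose inductive hypothesis is never actually invoked --- the two corollaries already reduce everything directly to level $3$.
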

\begin{proof} For $k=2$, we know from our initial definition of $H_2$ that $H_2$ was the only proper kinetic subgroup of $\agl{2}$.

By Corollary ~\ref{BigCorollary}, we know that for $k\geq 3$, the only maximal kinetic subgroups of $\agl{k}$ are $H_k$. Thus, we must check that no proper subgroups of $H_k$ are also kinetic. However, by Corollary ~\ref{SmallCorollary}, any kinetic proper subgroups of $H_k$ are in bijection with a proper kinetic subgroup of $H_3$. By Magma computation, we find that that there are no proper kinetic subgroups of $H_3$, hence there are no proper kinetic subgroups of $H_k$.
\end{proof}

\section{The Image of the Arboreal Representation}
The significance of these kinetic groups is that they are exactly the images of arboreal representations. We can say more for our specific curve and point.

\begin{lemma}\label{IsHk} For all $k\geq 2$, $\im(\omega_k)=H_k$.\end{lemma}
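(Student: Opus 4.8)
The plan is to prove $\im(\omega_k) = H_k$ by induction on $k$, using the recursive definition $H_k = \varphi_k^{-1}(H_{k-1})$ together with the characterization from Theorem~\ref{rum} that the only kinetic subgroups of $\agl{k}$ are $H_k$ and the full group. The first step is to verify that $\im(\omega_k)$ is always kinetic: the projection $\pr \circ \omega_k : \Gal(K_k/\Q) \to \gl{k}$ is exactly the classical mod-$2^k$ representation $\rho_{E,2^k}$, which is surjective by the hypothesis (carried over from the setup) that $E$ has surjective classical representation; and the map $\phi \circ \omega_k : \Gal(K_k/\Q) \twoheadrightarrow \agl{}$ is surjective because it factors through $\Gal(K_2/\Q)$ and one checks directly that $\im(\omega_2) = H_2$, which surjects onto $\agl{}$ trivially (as $\agl{} = \agl{1}$ is the target). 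So by Theorem~\ref{rum}, for each $k \geq 2$ we have $\im(\omega_k) \in \{H_k, \agl{k}\}$, and it remains to rule out the full group.

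The base case $k=2$ is the crux and I expect it to be the main obstacle: I must show $\im(\omega_2) = H_2$ rather than all of $\agl{2}$, i.e. that $\im(\omega_2)$ is a \emph{proper} kinetic subgroup. This is an explicit computation about the field $K_2 = \Q([4^{-1}]P)$. The natural approach is to exhibit a nontrivial algebraic relation forcing the image into an index-four subgroup — equivalently, to produce an intermediate field fixed by a subgroup of index $4$. The hypothesis that $P$ has no rational $2$-division point is what should pin down the relevant coset structure: it guarantees $\omega_1$ detects the $2$-division data nontrivially, and the structure of $H_2$ (generated by the two explicitly listed elements, each of whose matrix part has determinant and trace constrained) reflects a congruence/quadratic-residue obstruction. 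Concretely, I would compute $\im(\omega_2)$ in Magma, or verify by hand that the displayed generators of $H_2$ lie in the image and that $\omega_2$ cannot surject (e.g. by finding a degree dividing $|H_2| = 6 \cdot 64 = 384$ for the extension $K_2/\Q$, matching $|H_2|$ and not $|\agl{2}| = 24 \cdot 64$).

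For the inductive step, assume $\im(\omega_{k-1}) = H_{k-1}$ for some $k \geq 3$. By Theorem~\ref{rum} we already know $\im(\omega_k)$ is either $H_k$ or $\agl{k}$, since it is kinetic. The compatibility of the arboreal representations under the quotient $\varphi_k : \agl{k} \to \agl{k-1}$ gives $\varphi_k(\im(\omega_k)) = \im(\omega_{k-1}) = H_{k-1}$, because reducing the $2^k$-division data modulo $2^{k-1}$ recovers the $2^{k-1}$-division representation. If $\im(\omega_k)$ were the full group $\agl{k}$, then $\varphi_k(\agl{k}) = \agl{k-1}$ would force $H_{k-1} = \agl{k-1}$, contradicting that $H_{k-1}$ is proper. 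Hence $\im(\omega_k) \subseteq \varphi_k^{-1}(H_{k-1}) = H_k$; combined with kineticity forcing it to be one of the two options, we get $\im(\omega_k) = H_k$, closing the induction. The only delicate point beyond the base case is justifying the surjectivity $\varphi_k(\im(\omega_k)) = \im(\omega_{k-1})$, which follows from the canonical surjection $\Gal(K_k/\Q) \twoheadrightarrow \Gal(K_{k-1}/\Q)$ being intertwined with $\varphi_k$ by the definition of the $\omega_k$.
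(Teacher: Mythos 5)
Your overall skeleton---show $\im(\omega_k)$ is kinetic, show it is proper, and let Theorem~\ref{rum} plus the compatibility $\varphi_k(\im(\omega_k))=\im(\omega_{k-1})$ finish---is the same as the paper's, and your inductive step is a correct (if redundant) repackaging: once kineticity holds at every level and the level-$2$ image is proper, full image at any level $k$ would project onto all of $\agl{2}$, a contradiction, so no induction is really needed. The problems are in how you discharge the two nontrivial inputs. First, surjectivity of the classical representation is \emph{not} a hypothesis of this lemma: Lemma~\ref{IsHk} concerns the specific pair $E : y^2+y=x^3-3x+4$, $P=(4,7)$; the hypothesis you cite belongs to Theorem~\ref{main2}, not here. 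The paper must prove classical surjectivity and does so via the criteria of \cite{MR2995149}: $|\Delta(E)|\notin\Q^2$, $|\Delta(E)|\notin 2\Q^2$, no rational $2$-torsion, and no rational $t$ with $j(E)=-4t^3(t+8)$. Simply asserting it leaves the first kinetic condition unproven.

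Second, your base case hides a circularity. You justify the second kinetic condition (surjection onto $\agl{}$) for every $k$ by appealing to $\im(\omega_2)=H_2$; but your fallback argument for $\im(\omega_2)=H_2$---``$[K_2:\Q]=384$, hence proper, hence $H_2$ by uniqueness of the proper kinetic subgroup''---itself presupposes that $\im(\omega_2)$ is kinetic, i.e.\ that it surjects onto $\agl{}$. Only the brute-force option (computing the full Galois image of the degree-$384$ field $K_2$ inside $\agl{2}$) avoids the circle, and that is a substantially heavier computation than anything the paper performs. The paper breaks the circle with two independent, lighter arguments: (i) surjectivity onto $\agl{}$ is proved group-theoretically, using that $P$ generates $E(\Q)$ so no rational point doubles to $P$; the possible image orders $6$ and $12$ inside $\agl{}\cong S_4$ are then ruled out by a fixed-point argument for subgroups conjugate to $\{1\}\times\gl{}$ and by the failure of $A_4$ to surject onto $\gl{}$; (ii) properness follows from Theorem~5.2 of \cite{MR2640290}, which reduces it to showing $\Q(\beta_1)\subseteq\Q(E[4])$, verified by division-polynomial degree computations of size $24$ and $48$. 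If you supply an independent proof of (i)---or actually carry out the full level-$2$ image computation---your argument goes through; as written, it does not.
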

\begin{proof}

Our strategy is to show that $\im(\omega_k)$ must be kinetic but not the whole of $\agl{k}$, since by Theorem$~\ref{rum}$, the only two kinetic subgroups of $\agl{k}$ are the entire group and $H_k$.

To see that $\im(\omega_k)$ is kinetic, we may check the surjectivity conditions.
\begin{itemize}
\item 
The first condition for kinetic subgroups, that $\omega_k$ surjects onto $GL_{2}(\mathbb{Z}/2^k\mathbb{Z})$, can be checked using the criteria of \cite{MR2995149}. Their main theorem shows that $\omega_k$ surjects onto $GL_{2}(\mathbb{Z}/2^k\mathbb{Z})$ for all $k$ precisely when the curve $E$ satisfies the following conditions:
$ \left|\Delta(E)\right| \notin \mathbb{Q}^2, \left|\Delta(E)\right| \notin 2\mathbb{Q}^2$, $E$ has no rational 2-torsion, and there is no $t$ such that $j(E) = -4t^3(t+8)$.
The discriminant conditions can be checked by hand, and Magma easily determines that there are no rational solutions of the equation $j(E) = -4t^3(t+8)$. Their criteria then guarantee that $\im(\omega_k)$ satisfies the first kinetic condition.

\item We now show that the image of the representation satisfies the second kinetic condition.
Basic Galois theory applied to each coordinate of $Q$ implies that an element of $[2]^{-1}P$ is $\im(\omega_k)$-invariant if and only if there is a rational point $Q$ such that $2Q=P$.

Since $P=(4,7)$ is a generator of $E/\mathbb{Q}$, there is no rational point that doubles to $P$, and thus no element of $E[2] \cong \ztwok{k}^2$ that is fixed by $\im(\omega_k)\subset \agl{k}$. Note that $6$ divides $|\im(\omega_k)|$ since it surjects onto $\gl{}$, and that $|\im(\omega_k)|\geq 6$.
\begin{itemize}
\item Suppose $|\im(\omega_1)|=6$. In this case the image must be conjugate to $\{1\}\times\gl{}$, which we check in Magma. Consider first the case that the image is $\{1\}\times\gl{}$. Recall that the affine part of $\omega_1$ is defined by the mapping $\omega_1(\sigma) = \sigma(Q) - Q$. In particular, if the affine part of the image is trivial,  $\sigma(Q) - Q = 0$ for all $\sigma$ in our Galois group. But then $Q$ is a rational point that doubles to $P$, a contradiction. To take care of the conjugate cases we use that a subgroup fixes an element if and only if a conjugate subgroup fixes a similar element.
\item Suppose $|\im(\omega_1)|=12$. Note that $\agl{}\cong S_4$, and the only index $2$ subgroup is $A_4$. By calculation, an explicit representation of $A_4$ in $\agl{}$ does not surject onto $\gl{}$, a contradiction.
\item Thus, the only possible option is that $|\im(\omega_1)|=24$.
\end{itemize}
Therefore, we see that $\omega_k:\Gal(K_k/\mathbb{Q})\twoheadrightarrow\agl{}$ is surjective.
\end{itemize}

To see that $\im(\omega_k)$ is not all of $\agl{k}$, we use a criterion of Jones and Rouse. In Theorem 5.2 of \cite{MR2640290}, the authors show that an arboreal representation with surjective classical representation is surjective if and only if $P$ is not twice a rational point and $\Q(\beta_1) \nsubseteq \Q(E[4])$. We show that, in the case of our curve E, $\Q(\beta_1) \subseteq \Q(E[4])$.

Consider the tower of fields below.

\begin{center}
\begin{tikzpicture}[node distance = 2cm, auto]
      \node (Q) {$\mathbb{Q}$};
      \node (XP) [above of=Q, left of=Q] {$\mathbb{Q}(x(\tfrac{1}{2}P))$};
      \node (xE4) [above of=Q, above of=Q, right of=Q] {$\mathbb{Q}(x(E[4]))$};
      \node (K) [above of=XP] {$K_1$};
      \node (E4) [above of=xE4] {$\mathbb{Q}(E[4])$};
      \node (M) [above of= K, right of=K] {$M$};
      \draw[-] (Q) to node {} (XP);
      \draw[-] (Q) to node {} (xE4);
      \draw[-] (XP) to node {} (K);
      \draw[-] (xE4) to node {} (E4);
      \draw[-] (K) to node {} (M);
      \draw[-] (xE4) to node {} (M);
\end{tikzpicture}
\end{center}

Using division polynomials, we may calculate the size of the Galois group of $\mathbb{Q}(x(E[4]))/\mathbb{Q}$, and thus the size of the extension itself. Indeed, Magma tells us that the degree of the extension is $48$. On the left side of the diagram, $K_1$ must have degree $24$ because $|\Gal(K_1/\Q)| = |\agl{}| = 24$. If we calculate the degree of $\mathbb{Q}(x(\tfrac{1}{2}P))$ using division polynomials, we get an extension of degree 24. Thus, the left side tower has collapsed, and the defining polynomial for each extension on the left hand side is actually just equal to the polynomial that defines the $x$-coordinates. Now we can determine whether or not $\Q(\beta_1) \subseteq \Q(E[4])$, since the smallest field containing both $\mathbb{Q}(x(E[4]))$ and $\mathbb{Q}(\tfrac{1}{2}P)$ will be the splitting field of the product of their respective defining polynomials. Magma tells us that this splitting field has degree $48$, and so the smallest field containing both is $\mathbb{Q}(x(E[4]))$. But $\mathbb{Q}(x(E[4])) \subseteq \mathbb{Q}(E[4])$.

Therefore, since $\im(\omega_k)$ is kinetic but not equal to $\agl{k}$, it must be equal to $H_k$.
\end{proof}

\noindent As defined above, $H_2$ is the non-trivial kinetic subgroup of $\agl{2}$, up to conjugacy. Since $H_k$ consists of lifts of $H_2$, the structure of $H_k$ is then determined by the structure of $H_2$. To understand $H_k$ well, we present some facts about $H_2$.

\begin{description}
\item[Size] By parity arguments, $|\gl{k}|$ is easily computed. It follows that $|\agl{k}|=24\cdot 64^{k-1}$. Computationally, we find that $[\agl{2}:H_2]=4$. Since $\varphi_k:H_k\rightarrow H_{k-1}$ is a surjective homomorphism, one has that $[\agl{k}:H_k]=[\agl{k-1}:H_{k-1}]$ for all $k\geq 3$. So by induction, we see that for all $k\geq 2$, $[\agl{k}:H_k]=4$ and
$$|H_k|=\dfrac{|\agl{k}|}{4}=6\cdot 64^{k-1}.$$

\item[Structure] We use the following terminology.
\begin{definition}
Fix $M\in\gl{k}$. Let $V_{M}:=\{\vec{a}:(\vec{a},M)\in H_k\}$. We call $V_{M}$ the set of associated vectors of $M$.
\end{definition}
\begin{definition}
For any $k\in\mathbb{N}$, define $$V_{i,j}:=\left\{\vec{x}\in\ztwok{k}:\vec{x}\equiv \left[\begin{smallmatrix}i\\j\end{smallmatrix}\right]\pmod 2\right\}.$$
\end{definition}
Because $H_2$ is finite and we have an explicit representation, computationally we find the following.

\begin{proposition} 
The group $H_2$ is the disjoint union of the sets $V_{0,0}\times J$, $V_{0,1}\times \left[\begin{smallmatrix}1&3\\0&1\end{smallmatrix}\right]J$, $V_{1,0}\times \left[\begin{smallmatrix}1&2\\0&1\end{smallmatrix}\right]J$, and $V_{1,1}\times \left[\begin{smallmatrix}1&1\\0&1\end{smallmatrix}\right]J$ where $J$ is a subgroup of $\gl{2}$ that is isomorphic to $\mathbb{Z}/3\mathbb{Z}\rtimes D_4$ by the action kernel $V_4$. 
\end{proposition}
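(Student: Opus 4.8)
The plan is to analyze the projection homomorphism $\pr\colon H_2\twoheadrightarrow\gl{2}$, which is surjective because $H_2$ is kinetic. Its kernel is $\{(\vec a,I):\vec a\in V_I\}$, and since $|H_2|=6\cdot 64=384$ while $|\gl{2}|=6\cdot 16=96$, the kernel has order $4$; hence $|V_M|=4$ for every $M\in\gl{2}$. For any two lifts $(\vec a,M),(\vec a_0,M)$ of the same $M$ one computes $(\vec a,M)(\vec a_0,M)^{-1}=(\vec a-\vec a_0,I)$, so $V_M=\vec a_0+V_I$ is a coset of $V_I$. The key first step is to pin down $V_I$ without computation: conjugating $(\vec a,I)$ by any $(\vec b,M)\in H_2$ gives $(M\vec a,I)$, so $V_I$ is invariant under the action of $\pr(H_2)=\gl{2}$ on $\ztwok{2}^2$. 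The only $\gl{2}$-invariant subgroup of $\ztwok{2}^2$ of order $4$ is $2\cdot\ztwok{2}^2=V_{0,0}$: it is the unique subgroup isomorphic to $(\Z/2\Z)^2$ (hence characteristic), and no cyclic order-$4$ subgroup is invariant since $\gl{2}$ is transitive on primitive vectors. Thus $V_I=V_{0,0}$, and every $V_M$ is one of its four cosets $V_{0,0},V_{0,1},V_{1,0},V_{1,1}$, these being exactly the fibers of reduction mod $2$.

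This lets me define $\theta\colon\gl{2}\to(\Z/2\Z)^2$ by $\theta(M)=\vec a\bmod 2$ for any lift $(\vec a,M)\in H_2$; it is well defined by the previous paragraph. From the group law $(\vec a_1,M_1)(\vec a_2,M_2)=(\vec a_1+M_1\vec a_2,M_1M_2)$ one reads off the cocycle identity $\theta(M_1M_2)=\theta(M_1)+\overline{M_1}\,\theta(M_2)$, where $\overline{M_1}$ is $M_1$ reduced mod $2$ acting on $(\Z/2\Z)^2$. A short manipulation of this identity shows that $J:=\theta^{-1}(0)$ is a subgroup of $\gl{2}$, that $\theta$ is constant on each left coset $MJ$, and conversely that $\theta(M_1)=\theta(M_2)$ forces $M_1^{-1}M_2\in J$; hence the fibers of $\theta$ are precisely the left cosets of $J$. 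Since $H_2$ is kinetic, the reduction $\varphi_2\colon H_2\twoheadrightarrow\agl{}$ is surjective, so every residue in $(\Z/2\Z)^2$ occurs as a vector part, i.e.\ $\theta$ is onto. Therefore $[\gl{2}:J]=4$, $|J|=24$, and grouping the elements of $H_2$ by the value of $\theta$ yields the disjoint decomposition $H_2=\bigsqcup_{(i,j)}V_{i,j}\times(M_{(i,j)}J)$, where $M_{(i,j)}$ is any matrix with $\theta(M_{(i,j)})=(i,j)$.

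It then remains to match each residue $(i,j)$ with the stated coset representative and to identify the isomorphism type of $J$. For the representatives I evaluate $\theta$ on the given generators of $H_2$: the vector parts $\left[\begin{smallmatrix}1\\2\end{smallmatrix}\right]$ and $\left[\begin{smallmatrix}3\\3\end{smallmatrix}\right]$ reduce mod $2$ to $(1,0)$ and $(1,1)$, which fixes $\theta$ on the two generating matrices, and the cocycle identity then propagates $\theta$ to any word in them; expressing $\left[\begin{smallmatrix}1&c\\0&1\end{smallmatrix}\right]$ for $c=1,2,3$ in the generators confirms the $\theta$-values $(1,1),(1,0),(0,1)$, matching the claimed representatives $\left[\begin{smallmatrix}1&1\\0&1\end{smallmatrix}\right],\left[\begin{smallmatrix}1&2\\0&1\end{smallmatrix}\right],\left[\begin{smallmatrix}1&3\\0&1\end{smallmatrix}\right]$. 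For the structure of $J$, I would locate its (necessarily normal) Sylow $3$-subgroup $\cong\Z/3\Z$ together with a Sylow $2$-subgroup of order $8$, verify the latter is $\cong D_4$, and check that the conjugation action $D_4\to\Aut(\Z/3\Z)\cong\Z/2\Z$ has kernel the subgroup $V_4\le D_4$, giving $J\cong\Z/3\Z\rtimes D_4$ as stated.

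The conceptual core, namely that the fibers $V_M$ collapse onto the four cosets of $V_{0,0}$ and are governed by the single cocycle $\theta$, is forced and requires no computation. The genuine obstacle is the last step: the precise pairing of $\theta$-values with the unipotent coset representatives and the exact isomorphism type of $J$ with its action are arithmetic facts with no purely formal explanation, so they must be verified by explicit computation in the given presentation of $H_2$ (e.g.\ in Magma).
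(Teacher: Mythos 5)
Your proposal is correct, but it takes a genuinely different route from the paper: the paper's entire proof of this proposition is a machine verification (``Magma shows this to be true''), together with a record of generators of $J$. You instead derive the shape of the decomposition conceptually from the two kinetic conditions and the fact $[\agl{2}:H_2]=4$. Surjectivity of $\pr|_{H_2}$ plus normality of its kernel forces $V_I$ to be a $\gl{2}$-invariant subgroup of order $4$ in $\ztwok{2}^2$, hence the $2$-torsion subgroup $V_{0,0}$ (your uniqueness argument via transitivity on order-$4$ vectors is sound); the crossed-homomorphism identity $\theta(M_1M_2)=\theta(M_1)+\overline{M_1}\,\theta(M_2)$ then makes the fibers of $\theta$ exactly the left cosets of $J=\theta^{-1}(0)$; and surjectivity of $\phi:H_2\twoheadrightarrow\agl{}$ is precisely what makes $\theta$ onto, giving $[\gl{2}:J]=4$ and the disjoint union $H_2=\bigsqcup_{(i,j)}V_{i,j}\times\bigl(M_{(i,j)}J\bigr)$. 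I checked the individual computations (the conjugation formula $(\vec b,M)(\vec a,I)(\vec b,M)^{-1}=(M\vec a,I)$, the coset argument for $J$, and the counting $4\cdot 4\cdot 24=384$) and they are all correct. What your approach buys is insight and generality: it shows that \emph{any} index-four kinetic subgroup must have this fibered structure over a crossed homomorphism, and it isolates exactly which assertions are genuinely arithmetic --- the pairing of the unipotent representatives with the residues $(i,j)$ and the isomorphism type $J\cong\Z/3\Z\rtimes D_4$ with action kernel $V_4$ --- which you correctly defer to explicit computation, just as the paper defers everything. What the paper's approach buys is brevity and certainty about those specific facts; note also that your starting input $[\agl{2}:H_2]=4$ is itself one of the paper's Magma computations, so neither argument is computation-free, but yours reduces the computational burden to a minimum and explains why the decomposition takes this form.
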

\begin{proof}
Magma shows this to be true. For reference, $\left\{\left[\begin{smallmatrix}0&3\\1&0\end{smallmatrix}\right],\left[\begin{smallmatrix}1&3\\3&0\end{smallmatrix}\right]\right\}$ generates $J$.
\end{proof}
\end{description}


\section{Computing the Fraction}


Recall that if $p$ is a prime unramified in $K_k$, $(\vec{v},M) \in  \omega_k\left(\left[ \frac{K_k/\mathbb{Q}}{p} \right]\right)$, and $\det(M-I) \not\equiv 0 \pmod{2^k}$, then the point $P$ has odd order in $E(\mathbb{F}_p)$ if and only if $\vec{v}$ is in the column space of $(M-I)$.

By the Chebotarev density theorem,

\[
\lim_{x \to \infty} \frac{\left|\{ p~\text{prime and unramified in }K_k : p \leq x, \artin{K_k/\Q}{p} \subseteq S \}\right|}{\pi(x)} = \frac{|S|}{|\Gal(K_k/\Q)|},
\]
where $S$ a union of conjugacy classes in $\Gal(K_k/ \Q)$.
This is not quite the limit required to compute the fraction of primes dividing the ECHO sequence.
We will choose $S$ so that $\artin{K_k/\Q}{p} \subseteq S$ for all $k$ precisely when some $\sigma_p \in \artin{K_k/\Q}{p}$  fixes some $\beta_{k(i)}$.

We observe that if this fact is true for a certain $k$, it is true for $j \leq k$ as well. Thus, to find the primes for which that is true for all $k$ we need only consider its limit as $k$ approaches infinity.
Therefore,

$$\lim_{x \to \infty} \frac{\pi '(x)}{\pi (x)} =  
\lim_{k \to \infty} \lim_{x \to \infty} \frac{\left|\{ p~\text{prime and unramified in }K_k : p \leq x, \artin{K_k/\Q}{p} \subseteq S \}\right|}{\pi(x)}. $$

Moreover, since
 $$\frac{|S|}{|\Gal(K_k/ \mathbb{Q})|} = \frac{|\omega_k(S)|}{|\omega_k(\Gal(K_k/ \mathbb{Q}))|} = \frac{|\{ (\vec{v}, M) \in H_k : \vec{v} \in \im(M-I)\}|}{|H_k|},$$
we may compute the density in terms of the the structure of $H_k$.

We must make two separate choices of $S$ to compute this fraction.
We consider $\frac{|S_1|}{|\text{Gal}(K_k/ \mathbb{Q})|}$ where $S_1$ is the set of all elements in $\text{Gal}(K_k/ \mathbb{Q})$ such that
$$(\vec{v}, M) \in H_k \text{, det}(M-I) \not\equiv 0\pmod{2^k} \text{, and } \vec{v} \in \im(M-I).$$
This contains only elements corresponding to primes $p$ for which $P$ has odd order in $E(\mathbb{F}_p)$.
Secondly, we consider $\frac{|S_2|}{|\text{Gal}(K_k/ \mathbb{Q})|}$ where $S_2$ is the set of all elements in $\text{Gal}(K_k/ \mathbb{Q})$ such that for all $(\vec{v}, M) \in H_k$, $\vec{v}$ is in the column space of $M-I$.
This contains all the elements corresponding to primes $p$ for which $P$ has odd order in $E(\mathbb{F}_p)$, but not exclusively those elements. Both $S_1$ and $S_2$ are unions of conjugacy classes.

Consider $$\lim_{k \to \infty} \frac{|S_2| - |S_1|}{|\text{Gal}(K_k/ \mathbb{Q})|}
= \lim_{k \to \infty} \frac{|S_2 - S_1|}{|\text{Gal}(K_k/ \mathbb{Q})|}.$$

We then note
$$\lim_{k \to \infty} \frac{|S_2 - S_1|}{|\text{Gal}(K_k/ \mathbb{Q})|}
= \lim_{k \to \infty} \frac{ |\{ (\vec{v}, M) \in H_k \text{: det}(M-I) \equiv 0\pmod{2^k} \text{ and } \vec{v} \in \im(M-I) \}| }{|\text{Gal}(K_k/ \mathbb{Q})|}=0.$$

Thus, we may use $$\lim_{k \to \infty} \frac{|S_1|}{|\text{Gal}(K_k/ \mathbb{Q})|}= \lim_{k \to \infty} \frac{|S_2|}{|\text{Gal}(K_k/ \mathbb{Q})|}$$

to evaluate $\lim_{x \to \infty} \frac{\pi '(x)}{\pi(x)}$, since
$$\lim_{k \to \infty} \frac{|S_1|}{|\text{Gal}(K_k/ \mathbb{Q})|} \leq \lim_{x \to \infty} \frac{\pi '(x)}{\pi(x)} \leq \lim_{k \to \infty} \frac{|S_2|}{|\text{Gal}(K_k/ \mathbb{Q})|}.$$

For our computation of the fraction, we chose to evaluate the larger limit $$\lim_{k \to \infty} \frac{|S_2|}{|\text{Gal}(K_k/ \mathbb{Q})|} = \lim_{k \to \infty} \frac{|\{ (\vec{v}, M) \in H_k : \vec{v} \in \im(M-I)\}|}{|H_k|}. $$\\

Considering the above discussion, we make the following essential definition:
\begin{definition}
Let $M \in M_2(\mathbb{Z}/2^r \mathbb{Z})$, and $S \subseteq \mathbb{Z}/2^k \mathbb{Z}$. We define $$\mu(M, r, S) = \lim_{k \to \infty} \frac{|\{ (\vec{v}, M') : \vec{v} \in S, \vec{v} \in \im(M'), M' \equiv M \pmod {2^r} \}|}{|H_k|}.$$
\end{definition}
Note that if $r>1$, $M$ is invertible and $\im(M-I) \cap V_{M} \neq \emptyset$, we have the special case $$\mu(M-I,r,V_{0,0}) = \lim_{k \to \infty} \frac{|\{ (\vec{v}, M') \in H_k : \vec{v} \in \im(M'-I), M' \equiv M \pmod {2^r} \}|}{|H_k|}.$$

By partitioning $H_k$ at every level into lifts of matrices in $H_2$, the limit that we want to compute from the Chebotarev density theorem discussion becomes $$ \mu(H_2) \coloneqq \sum_{M \in GL_2(\mathbb{Z}/4 \mathbb{Z})} \mu(M-I, 2, V_M).$$ We first note that if $\im(M-I) \cap V_{M}$ is empty at level $k=2$, it must be empty for every lift. (Otherwise we could reduce the matrix and vector and get an element of the intersection at $k=2$.) Thus, we must only compute $\mu(M)$ for those $M$ where $\im(M-I) \cap V_{M}$ is nonempty at level $k=2$.


We now observe that
 $$\sum_{M \in GL_2(\mathbb{Z}/4 \mathbb{Z})} \mu(M-I, 2, V_M) = \lim_{k \to \infty} \sum_{M \in \gl{k}} \frac{|\im(M-I) \cap V_{M}|/|V_{M}|}{|\text{GL}_2(\mathbb{Z}/2^k \mathbb{Z})|}.$$
This fraction changes, however, depending on $k$ and on the determinant of $M-I$.\\

Therefore, instead of taking the sum of $\frac{|\im(M-I) \cap V_{M}|}{|V_{M}|}$ divided by $|GL_2(\mathbb{Z}/2^k \mathbb{Z})|$, we consider
$$\frac{|\im(M-I) \cap V_{M}|}{|V_{M}|} = \frac{|\im(M-I) \cap V_{M}|}{|\im(M-I)|} \frac{|\im(M-I)|}{|V_{M}|} $$
$$= \frac{|\im(M-I) \cap V_{M}|}{|\im(M-I)|} \cdot 4 |\det(M-I)|_2.$$

Here $|s|_{2}$ denotes the $2$-adic absolute values of $s$, namely,
$|s|_{2} = 2^{-{\rm ord}_{2}(s)}$. The second equality above follows from a result of \cite{Somos5}: if $\det(M)=2^r s$, with $s$ odd and $r < k$, $\frac{|\im(M)|}{|(\mathbb{Z}/2^k \mathbb{Z})^2|} = |\det(M-I)|_2 \coloneqq \frac{1}{2^r}$. From now on we define $ f_M \coloneqq \frac{|\im(M-I) \cap V_{M}|}{|\im(M-I)|}$.
Since $\omega_k$ is not surjective, the intersection of the column space of $M-I$ for a given $M \in \text{GL}_2(\mathbb{Z}/2^k \mathbb{Z})$  and the set of vectors $\vec{v} \in (\mathbb{Z}/2^k \mathbb{Z})^2$ such that $(\vec{v},M) \in H_k$ may be empty, or may be a fraction of $\im(M-I)$.  
Since $V_{i,j}$ contains one fourth of the vectors in $(\mathbb{Z}/2^k \mathbb{Z})^2$ for each $i,j$,the fraction the column space that contains a subset of $V_M$, the set of vectors for which $\{ (\vec{v},M)|\vec{v} \in V_M \}$ is in $H_k$, may take on one of a few values between $0$ and $1$.

We may easily determine the possible values for $f_M$ and divide the sum in the above limit into sub-sums by the following result.

\begin{lemma}{If $\im(M-I) \cap V_M \neq \emptyset$, then $|\im(M-I) \cap V_M| = |\im(M-I) \cap V_{0,0}|$. Therefore, $f_M$ may take the values $1, \frac{1}{2}, \frac{1}{4}, \text{ or } 0$.}
\begin{proof}
Suppose $\im(M-I) \cap V_M \neq \emptyset$.  Consider $\sigma : \im(M-I) \cap V_{0,0} \to \im(M-I) \cap V_M$ given by $\sigma(\vec{w}) = \vec{v} + \vec{w}$.
Any element of $\im(M-I) \cap V_M$ differs from a fixed $\vec{v} \in \im(M-I) \cap V_M$ by an element of $\im(M-I) \cap V_{0,0}$, including $\vec{v}$ itself.
Thus, $\sigma$ is a bijection, since $\vec{w_1} + \vec{v} = \vec{w_2} + \vec{v}$ implies that $\vec{w_1}=\vec{w_2}$ and given any $\vec{x} \in \im(M-I) \cap V_M$ we know $( \vec{x} - \vec{v}) \in \im(M-I) \cap V_{0,0}$.
Therefore, $|\im(M-I) \cap V_{0,0}| = |\im(M-I) \cap V_M|$.\\

By the second isomorphism theorem of modules, $[\im(M-I) : \im(M-I) \cap V_{0,0}] =   [\im(M-I) + V_{0,0} : V_{0,0}]$. Since $V_{0,0}$ has index four in the entire set of vectors, it has index two or one in $\im(M-I) + V_{0,0}$, and so we are done. Finally, if $\im(M-I) \cap V_M = \emptyset$, then $f_M = 0$.

\end{proof}
\end{lemma}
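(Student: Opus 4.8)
The plan is to work additively inside the finite abelian group $(\mathbb{Z}/2^k\mathbb{Z})^2$, in which $\im(M-I)$ is a subgroup and $V_{0,0}=2(\mathbb{Z}/2^k\mathbb{Z})^2$ is the index-four subgroup of vectors with both entries even. The single structural input I would isolate first is that the set of associated vectors $V_M$ is a coset of $V_{0,0}$ (indeed, one of the four parity classes $V_{i,j}$). Granting this, the statement becomes a comparison between the intersection of the subgroup $\im(M-I)$ with the coset $V_M$ and its intersection with the subgroup $V_{0,0}$ itself, and the whole argument reduces to elementary counting.

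For the equality $|\im(M-I)\cap V_M|=|\im(M-I)\cap V_{0,0}|$, I would assume $\im(M-I)\cap V_M\neq\emptyset$, fix an element $\vec{v}$ of it, and define the translation $\sigma(\vec{w})=\vec{v}+\vec{w}$ from $\im(M-I)\cap V_{0,0}$ to $\im(M-I)\cap V_M$. It lands in the target because $\vec{v},\vec{w}\in\im(M-I)$ forces $\vec{v}+\vec{w}\in\im(M-I)$, while $\vec{v}\in V_M$ and $\vec{w}\in V_{0,0}$ forces $\vec{v}+\vec{w}\in V_M$ (adding an element of $V_{0,0}$ preserves the coset $V_M$). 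It is injective because translation is, and it is surjective because any $\vec{x}\in\im(M-I)\cap V_M$ has $\vec{x}-\vec{v}\in\im(M-I)\cap V_{0,0}$ (two elements of the coset $V_M$ differ by an element of $V_{0,0}$) with $\sigma(\vec{x}-\vec{v})=\vec{x}$. A bijection between finite sets then gives the claimed equality.

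For the list of values of $f_M$, I would substitute the equality just obtained to write $f_M=|\im(M-I)\cap V_{0,0}|/|\im(M-I)|$, which is the reciprocal of the index $[\im(M-I):\im(M-I)\cap V_{0,0}]$. The second isomorphism theorem for abelian groups identifies this index with $[\im(M-I)+V_{0,0}:V_{0,0}]$. Since $V_{0,0}\subseteq\im(M-I)+V_{0,0}\subseteq(\mathbb{Z}/2^k\mathbb{Z})^2$ and $[(\mathbb{Z}/2^k\mathbb{Z})^2:V_{0,0}]=4$, this index divides $4$ and so equals $1$, $2$, or $4$; hence $f_M\in\{1,\tfrac{1}{2},\tfrac{1}{4}\}$ when the intersection is nonempty, and $f_M=0$ otherwise.

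I expect the main obstacle to be pinning down the structural input, namely that $V_M$ really is a coset of $V_{0,0}$ at level $k$ and not some smaller or more irregular subset. The cleanest route I see is to observe that $V_M$ is a coset of $V_I:=\{\vec{a}:(\vec{a},I)\in H_k\}$, since for $(\vec{a}_0,M)\in H_k$ and $(\vec{b},I)\in H_k$ the group law gives $(\vec{b},I)*(\vec{a}_0,M)=(\vec{b}+\vec{a}_0,M)\in H_k$; a size count from $|H_k|=6\cdot 64^{k-1}$ and $|\gl{k}|=6\cdot 16^{k-1}$ then yields $|V_M|=4^{k-1}=|V_{0,0}|$, while the $H_2$-structure of the preceding Proposition forces $V_I\subseteq V_{0,0}$, so $V_I=V_{0,0}$ and $V_M$ is a genuine $V_{0,0}$-coset. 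Once this is secured, the remainder is routine group theory.
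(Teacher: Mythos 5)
Your proposal is correct and follows essentially the same route as the paper: the translation bijection $\vec{w}\mapsto\vec{v}+\vec{w}$ between $\im(M-I)\cap V_{0,0}$ and $\im(M-I)\cap V_M$, followed by the second isomorphism theorem and the observation that $[\im(M-I)+V_{0,0}:V_{0,0}]$ divides $[(\mathbb{Z}/2^k\mathbb{Z})^2:V_{0,0}]=4$. The only differences are to your credit: you explicitly verify the structural input the paper uses silently, namely that $V_M$ is a full coset of $V_{0,0}$ (via $V_I=V_{0,0}$, the group law, and the size count), and your conclusion that the index is $1$, $2$, or $4$ is stated more accurately than the paper's ``index two or one,'' which as written would not account for the value $f_M=\tfrac{1}{4}$ permitted by the lemma.
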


We next observe that the fraction $f_M$ for a given $M \in GL_2(\mathbb{Z}/ 4 \mathbb{Z})$ is the same for all lifts of $M$ in $\gl{k}$.

\begin{lemma}{The fraction $f_M$ equals $f_{M_k}$ for all lifts $M_k \in \gl{k}$ of $M \in \text{GL}_2(\mathbb{Z}/ 4 \mathbb{Z})$.}
\begin{proof}

Since $V_{0,0} \in (\mathbb{Z}/ 4 \mathbb{Z})^2$ lifts to $V_{0,0} \in (\mathbb{Z}/ 2^k \mathbb{Z})^2$ for any $k$ and because $M' \equiv M \pmod{4}$, the image of $M-I \in GL_2(\mathbb{Z}/ 4 \mathbb{Z})$ lifts to the image of $M'-I \text{ in }GL_2(\mathbb{Z}/ 2^k \mathbb{Z})$ we conclude $\im(M'-I) \cap V_{0,0}$ is a lift of $\im(M-I) \cap V_{0,0}$.  Therefore, the index of both intersections in the respective images remains $f_M$.

\end{proof}
\end{lemma}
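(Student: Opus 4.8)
The plan is to exploit the reduction-mod-$4$ homomorphism together with the observation that membership in $V_{0,0}$ is a congruence condition modulo $2$. First I would invoke the preceding lemma to replace $V_M$ by $V_{0,0}$: since $|\im(M-I)\cap V_M| = |\im(M-I)\cap V_{0,0}|$ whenever the intersection is nonempty, it suffices to prove
\[
\frac{|\im(M_k-I)\cap V_{0,0}|}{|\im(M_k-I)|} = \frac{|\im(M-I)\cap V_{0,0}|}{|\im(M-I)|},
\]
where $M$ denotes the reduction of $M_k$ modulo $4$. Let $\pi \colon (\mathbb{Z}/2^k\mathbb{Z})^2 \to (\mathbb{Z}/4\mathbb{Z})^2$ be the coordinatewise reduction map, which is a surjective group homomorphism.

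Next I would establish two facts. Because reduction is a ring homomorphism, $\pi((M_k-I)\vec{w}) = (M-I)\pi(\vec{w})$ for every $\vec{w}$; since $\pi$ is surjective on vectors, the restriction $\pi|_{\im(M_k-I)}$ is a surjective homomorphism onto $\im(M-I)$, and by the first isomorphism theorem all of its fibers have the common size $c \coloneqq |\im(M_k-I)|/|\im(M-I)|$. Second, membership in $V_{0,0}$ is the condition $\vec{x}\equiv \vec{0}\pmod 2$, which factors through reduction modulo $4$; hence the set $V_{0,0}\subset(\mathbb{Z}/2^k\mathbb{Z})^2$ is exactly $\pi^{-1}(V_{0,0})$, where the target $V_{0,0}$ lives in $(\mathbb{Z}/4\mathbb{Z})^2$.

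Combining these, $\im(M_k-I)\cap V_{0,0}$ is precisely the preimage under $\pi|_{\im(M_k-I)}$ of $\im(M-I)\cap V_{0,0}$, so it is a union of $|\im(M-I)\cap V_{0,0}|$ fibers, each of size $c$, giving $|\im(M_k-I)\cap V_{0,0}| = c\,|\im(M-I)\cap V_{0,0}|$. Dividing by $|\im(M_k-I)| = c\,|\im(M-I)|$ cancels the factor $c$ and yields $f_{M_k} = f_M$, as required. The only step demanding genuine care is the identification of $\im(M_k-I)\cap V_{0,0}$ with a full preimage: one must confirm both that every vector in $\im(M_k-I)$ automatically reduces into $\im(M-I)$ (immediate from the homomorphism property) and that the $V_{0,0}$ intersection is neither enlarged nor shrunk under $\pi$, which is exactly where the mod-$2$ nature of $V_{0,0}$ is essential. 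Everything else reduces to the bookkeeping of counting fibers of a surjection of finite abelian groups.
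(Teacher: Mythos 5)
Your proof is correct and takes essentially the same route as the paper's: both rest on the facts that reduction modulo $4$ carries $\im(M_k-I)$ onto $\im(M-I)$ and that $V_{0,0}$ at level $k$ is the full preimage of $V_{0,0}$ at level $2$, so the intersection upstairs is the full preimage of the intersection downstairs and the two ratios agree — your fiber-counting via the first isomorphism theorem simply makes precise what the paper compresses into the word ``lift.'' The only point worth adding is that to apply the preceding lemma at level $k$ (replacing $V_{M_k}$ by $V_{0,0}$ in the definition of $f_{M_k}$) you need nonemptiness of $\im(M_k-I)\cap V_{M_k}$, which follows immediately from your own machinery: surjectivity of $\pi|_{\im(M_k-I)}$ together with the identity $V_{M_k}=\pi^{-1}(V_M)$ shows that nonemptiness at level $2$ propagates upward.
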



We can now explicitly calculate the fraction using the following cases.

\begin{case}[$\det(M-I)$ is invertible]
\end{case}
Any lift of an invertible matrix is invertible. Indeed, for these matrices and all of their lifts, the image of $M-I$ is the entirety of $(\mathbb{Z}/2^k \mathbb{Z})^2$, so $\frac{|V_{0,0} \cap \im(M-I)|}{|V_0,0|} = 1$, and $\mu(M, 2, V_{0,0}) = \frac{1}{96}$. Since there are 32 matrices with invertible determinant, their total contribution to the sum is $\frac{32}{96} = \frac{1}{3}.$

\begin{case}[$\det(M-I) \equiv 2 \mod{4}$]
\end{case}

For all of the matrices with at least one associated vector in the image of $M-I$, of which there are 12 in $H_2$, $f_M = \frac{1}{2}$ and $|\det(M-I)|_2 = \frac{1}{2}$. The total contribution of these matrices is $\frac{12}{96} \cdot 4 \cdot \frac{1}{2} \cdot \frac{1}{2} = \frac{1}{8}$.

\begin{case}[$\det(M-I) = 0$ and $M-I$ has at least one odd entry]
\end{case}

For this case, we use two lemmas of \cite{Somos5}: First, given a matrix $M \in M_2(\mathbb{Z}/2^k \mathbb{Z})$ where $\det(M-I) = 0$ and $M-I$ has at least one odd entry, half of the lifts in $M_2(\mathbb{Z}/2^{k+1} \mathbb{Z})$ have determinant $2^k$ and half have determinant zero. Second, as we used before, if $\det(M)=2^r s$, with $s$ odd and $r < k$, $\frac{|\im(M)|}{|(\mathbb{Z}/2^k \mathbb{Z})^2|} = \frac{1}{2^r}$. We will also use the fact that for any lift $M'$ of $M-I$, $f_{M'}=f_{M}$. A computation in Magma shows that, for the matrices $M$ with at least one associated vector in the image of $M-I$, $f_M = \frac{1}{2}$.

Thus, for $M \in H_2$ with $\det(M-I)=0$ and with $M-I$ having at least one odd entry, $$\mu(M,2,V_{0,0}) = \frac{1}{96}  \lim_{n \to \infty} \sum_{i=2}^{n} 4f_M|\det(M-I)|_2\frac{1}{2^{i-1}} = \lim_{n \to \infty} \sum_{i=2}^{n} 4 \cdot \frac{1}{2} \cdot \frac{1}{2^{i}} \cdot \frac{1}{2^{i-1}} = \frac{1}{3},$$ where the sum corresponds to the fact that at level $i$, $\frac{1}{2^{i-1}}$ of lifts of $M$ have $|\det(M'-I)|_2 = \frac{1}{2^{i}} $. A Magma computation shows that there are 12 matrices $ M \in H_2$ that have at least one associated vector in the image of $M-I$. Therefore, the total contribution from these 12 matrices is $12 \cdot \frac{1}{96} \cdot \frac{1}{3} = \frac{1}{24}$.

\begin{case}[$\det(M-I) = 0$ and $M-I$ has all even entries]
\end{case}

\begin{lemma} Suppose $M$ has all even entries, and $r>1$. Then $$\mu(M,r,V_{0,0}) = \frac{1}{16} \mu\left(\frac{M}{2},r-1,(\mathbb{Z}/2^{r-1} \mathbb{Z})^2 \right).$$ Further, when $r=1$, $\mu\left(M,1,(\mathbb{Z}/2 \mathbb{Z})^2 \right) = \frac{1}{64} \mu\left(\frac{M}{2},0,\{0\}\right).$
\end{lemma}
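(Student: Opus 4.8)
The plan is to exploit the all-even hypothesis to pull a factor of $2$ out of every matrix appearing in the count, thereby converting the level-$k$ data attached to $M$ into level-$(k-1)$ data attached to $M/2$. Concretely, since $M$ has all even entries and $r \geq 1$, any matrix $M'$ with $M' \equiv M \pmod{2^r}$ is itself even, so I would write $M' = 2H$ for a matrix $H$ well-defined modulo $2^{k-1}$; under this substitution the congruence $M' \equiv M \pmod{2^r}$ becomes $H \equiv M/2 \pmod{2^{r-1}}$. The first key observation is that $\im(M') = \im(2H) = 2\,\im(H) \subseteq V_{0,0}$, so the constraint $\vec{v} \in V_{0,0}$ in $\mu(M,r,V_{0,0})$ is \emph{automatic}. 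Writing $\vec{v} = 2\vec{w}$ then identifies the admissible vectors $\vec{v} \in \im(M')$ bijectively with the vectors $\vec{w} \in \im(H)$ one level down, and simultaneously explains why the set $S$ on the right-hand side is the full space rather than $V_{0,0}$: the parity restriction has already been absorbed by the halving.

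Next I would record the image-size identity that drives the whole reduction. Because multiplication by $2$ is an injection $\ztwok{k-1}^2 \hookrightarrow \ztwok{k}^2$ whose image is exactly the even vectors $V_{0,0}$, one has $|\im(2H)| = |\im(H)|$ when the former is computed modulo $2^k$ and the latter modulo $2^{k-1}$. Using the result of \cite{Somos5} that $|\im(N)| = 4^k |\det N|_2$, together with $|\det M|_2 = \tfrac14 |\det(M/2)|_2$, this identity lets me rewrite the numerator of $\mu(M,r,V_{0,0})$ at level $k$ as precisely the numerator of the $M/2$-count at level $k-1$. The remaining bookkeeping is then to compare this against the normalization: since $\varphi_k\colon H_k \twoheadrightarrow H_{k-1}$ gives $|H_k| = 64\,|H_{k-1}|$, the ratio of densities is governed by how the two counts scale as the level and the constraint modulus change.

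The heuristic I would make rigorous is that halving is measure-preserving for the $|\im|$-weight, so that the \emph{only} surviving change is combinatorial: relaxing the congruence from $2^r$ to $2^{r-1}$ frees one $2$-adic digit in each of the four matrix entries, contributing a factor $2^4 = 16$ and hence the $\frac{1}{16}$. For the base case $r = 1$ the same halving applies, but now the admissible vector set collapses from a two-dimensional space down to $\{0\}$ while the matrix congruence disappears entirely (reduction modulo $2^0$); this removes an additional $2^2 = 4$ from the vector coordinates, producing the factor $\frac{1}{64} = \frac{1}{16} \cdot \frac{1}{4}$ and the degenerate target $\mu(M/2, 0, \{0\})$. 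I expect the main obstacle to be justifying the ``measure-preserving'' step cleanly: in the Case-4 regime the determinant of $M$ is highly $2$-divisible, so the naive ``ratio $4$ per level'' for image sizes can fail, and one must invoke the determinant-lift lemmas of \cite{Somos5} (the same ones controlling how many lifts acquire determinant $2^k$) to show that, after summing over all lifts, the image weights redistribute so that exactly the lift-counts $16$ and $64$ remain. Aligning this determinant analysis with the vector-set accounting, and checking the degenerate $r=1$ endpoint separately, is the delicate part of the argument.
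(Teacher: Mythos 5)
Your core reduction is exactly the paper's: since every lift $M' \equiv M \pmod{2^r}$ is even, the substitutions $M' = 2H$, $\vec{v} = 2\vec{w}$ identify the level-$k$ data for $\mu(M,r,V_{0,0})$ with the level-$(k-1)$ data for $\mu\left(\frac{M}{2},\, r-1,\, (\mathbb{Z}/2^{r-1}\mathbb{Z})^2\right)$, the constraint $\vec{v}\in V_{0,0}$ being automatic; the paper's proof is precisely this bijection together with the bookkeeping that the denominator drops by $64$ while the change of ambient vector set restores a factor of $4$ (and, when $r=1$, no such readjustment, whence $\frac{1}{64}$).

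The one substantive criticism is that your final paragraph invents an obstacle that is not there, and the machinery you propose to overcome it would not work. Because your own injection argument gives the exact identity $|\im_{2^k}(2H)| = |\im_{2^{k-1}}(H)|$ at every finite level, the right move is to compare the level-$k$ numerator on the left with the level-$(k-1)$ numerator on the right --- they are equal on the nose --- and then pass to the limit; the factor $\frac{1}{16}$ is purely the ratio of normalizations between consecutive levels. No redistribution of image weights among lifts of a fixed matrix ever needs to be analyzed, so the determinant-lift lemmas of \cite{Somos5} are not needed for this lemma at all (they enter only afterwards, when the right-hand side $\mu$ is actually evaluated for singular $\frac{M}{2}$, as in Case 3). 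Moreover, the formula $|\im(N)| = 4^k|\det N|_2$ that you propose to lean on is exactly the tool that is unavailable in the regime where this lemma gets applied, namely $\det(M)=0$, so a proof routed through that formula would break. Keep the exact level-shifting bijection, drop the determinant detour, and your argument collapses to the paper's.
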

\begin{proof}
We have $\vec{v} \in \im(M)$ if and only if $\frac{\vec{v}}{2} \in \im\left(\frac{M}{2}\right)$. Thus, the numerators of the two limits in $\mu(M,r,V_{0,0})$ and $\mu\left(\frac{M}{2},r-1,(\mathbb{Z}/2^{r-1} \mathbb{Z})^2\right)$ are equal, but reducing $r$ by one reduces the denominator of $\mu\left(\frac{M}{2},r-1,(\mathbb{Z}/2^{r-1} \mathbb{Z})^2 \right)$ by $64$. The change of the ambient set of vectors then gives the extra factor of four. The second case has a factor of $\frac{1}{64}$ because we do not readjust for the ambient set of vectors. 
\end{proof}

If $M-I$ has all even entries, clearly $\im(M-I) \subseteq V_{0,0}$. Therefore, we need only compute $(M-I)/2$ for $M$ in $J_1$, since $\im(M-I) \cap V_{M}$ will be empty for $M$ in all other cosets. For three matrices in $J_1$, $(M-I)/2$ is invertible. For these three, as in the invertible case with $r=2$, an analogous computation to Case 1 gives $\mu\left(\frac{M-I}{2},1,(\mathbb{Z}/2 \mathbb{Z})^2\right) = \frac{1}{|\gl{}|} = \frac{1}{6}$. By our lemma, each contributes to the total sum at level $r=2$ an amount of $\mu(M, 2,V_{0,0}) = \frac{1}{16}\mu\left(\frac{M-I}{2}, 1,(\mathbb{Z}/2 \mathbb{Z})^2\right) = \frac{1}{16} \cdot \frac{1}{6}$. Since there are three such matrices, their total contribution is $\frac{1}{16} \cdot \frac{1}{6} \cdot 3 = \frac{1}{32}$.

For only one of them (the identity matrix), $(M-I)/2 = \left( \begin{smallmatrix} 0&0\\ 0&0 \end{smallmatrix} \right)$. In this case, $\mu(\left( \begin{smallmatrix} 0&0\\ 0&0 \end{smallmatrix} \right), 2, V_{0,0}) =\frac{1}{16} \mu\left(\left( \begin{smallmatrix} 0&0\\ 0&0 \end{smallmatrix} \right), 1,(\mathbb{Z}/2 \mathbb{Z})^2\right)$ since $M-I$ has all even entries. Now we apply our lemma again: $$\mu\left(\left( \begin{smallmatrix} 0&0\\ 0&0 \end{smallmatrix} \right), 1,(\mathbb{Z}/2 \mathbb{Z})^2\right) = \frac{1}{64} \mu\left(\left( \begin{smallmatrix} 0&0\\ 0&0 \end{smallmatrix} \right), 0,\{0\}\right) = \frac{1}{64} \sum_{M \in M_2(\mathbb{Z}/2 \mathbb{Z})} \mu\left(M,1,(\mathbb{Z}/2 \mathbb{Z})^2\right).$$ The last equality comes from the consideration that all matrices at level $k=1$ are lifts of the unique matrix modulo 1. Continuing, 
$$\frac{1}{64} \cdot \sum_{M \in M_2(\mathbb{Z}/2 \mathbb{Z})} \mu\left(M,1,(\mathbb{Z}/2 \mathbb{Z})^2\right) = \frac{1}{64} \mu\left(\left( \begin{smallmatrix} 0&0\\ 0&0 \end{smallmatrix} \right), 1,(\mathbb{Z}/2 \mathbb{Z})^2\right) + \frac{1}{64} \sum_{\substack{M \in M_2(\mathbb{Z}/2 \mathbb{Z}),\\ M \neq \left( \begin{smallmatrix} 0&0\\ 0&0 \end{smallmatrix} \right)}} \mu\left(M,1,(\mathbb{Z}/2 \mathbb{Z})^2\right).$$

From here we may solve for our unknown:$$\frac{63}{64} \mu\left(\left( \begin{smallmatrix} 0&0\\ 0&0 \end{smallmatrix} \right), 1,(\mathbb{Z}/2 \mathbb{Z})^2\right) = \frac{1}{64} \sum_{\substack{M \in M_2(\mathbb{Z}/2 \mathbb{Z}),\\ M \neq \left( \begin{smallmatrix} 0&0\\ 0&0 \end{smallmatrix} \right)}} \mu\left(M,1,(\mathbb{Z}/2 \mathbb{Z})^2\right).$$ Of the matrices appearing in the right hand side of the above sum, nine have even determinant and at least one nonzero entry, and six have odd determinant. Those six contribute $\frac{1}{6}$ as before, and an identical infinite sum argument as in case three gives a $\mu$ value of $\frac{1}{18}$ for the remaining nine. So $\mu\left(\left( \begin{smallmatrix} 0&0\\ 0&0 \end{smallmatrix} \right), 1,(\mathbb{Z}/2 \mathbb{Z})^2\right) = \frac{63}{64}\left(6 \cdot \frac{1}{6}+9 \cdot \frac{1}{18}\right) = \frac{1}{42}$.
We thus conclude that, at level $r=2$, the identity matrix contributes a $\mu$ value of $\frac{1}{16} \cdot \frac{1}{42} = \frac{1}{672}$.

Since these four cases partition the elements of $H_2$, $\mu(H_2) = \frac{1}{3} + \frac{1}{24} + \frac{1}{8} + \frac{1}{32} + \frac{1}{672} = \frac{179}{336}$. This concludes our proof of Theorems~\ref{main} and ~\ref{main2}.

\section{A Family of Elliptic Curves with Arboreal Representation $H_k$}
In proving the original fact concerning the density of primes dividing the ECHO sequence, we have found that for an elliptic curve and rational point with certain conditions, the arboreal representation has image conjugate to $H_k$, the only proper kinetic subgroup of $\agl{k}$ (up to conjugacy). These $H_k$ subgroups are especially interesting because they only arise from looking at arboreal representations modulo a power of $2$. In \cite{MR2640290}, the authors showed that the arboreal representations for primes $\ell$ larger than $2$ (with similar condition that no rational $\ell$-division point exists) always surject onto $AGL_{2}(\mathbb{Z}/\ell^k\mathbb{Z})$ if the classical Galois representations are surjective. We thus ask whether the exceptional subgroups $H_k$ appear as images of the arboreal representations for infinitely many pairs $(E,P)$.

To answer this question, we examine a certain family of elliptic curves to find a parametrization of elliptic curves and points that have the same arboreal image as our original curve. Keep in mind the conditions that we must show these curves obey:
\begin{itemize}
\item $E/\mathbb{Q}$ is an elliptic curve with a point $P$ such that $P$ has no rational $2$-division point.
\item The classical map $\rho_k:\Gal(K_k/\mathbb{Q})\rightarrow \gl{k}$ is surjective.
\item The arboreal map $\omega_k:\Gal(K_k/\mathbb{Q})\rightarrow \agl{k}$ is not surjective.
\end{itemize}

We examine the family of curves $E_{a,b}:y^2+axy+by=x^3+bx^2$ with the point $(0,0)$. By exercise $3.1$ of Silverman \cite{MR1312368}, any elliptic curve $E/K$ with a point $P\in E(K)$ such that $P,2P,3P\neq 0$ has an equation of the form of $E_{a,b}$ with $P$ corresponding to $(0,0)$. We will later take $E$ such that $P$ does not have any rational $2$-division points and such that the classical Galois representation attached to $E$ is surjective. These conditions will ensure that $P,2P,3P\neq 0$.

To find curves with arboreal image equal to $H_k$, we first construct a surface that parametrizes pairs $(E,P)$ with arboreal image contained in $H_k$ (or sufficiently, $H_2$). This result is summarized as follows.
\begin{lemma}
Let $E_{a,b}/\mathbb{Q}$ be the elliptic curve $y^2 + axy + by = x^3 + bx^2$ such that the $j$-invariant is not equal to $\frac{2048t^3}{3(t+1)(t+3)^2}$ for some $t\in\mathbb{Q}$. Furthermore, let $P=(0,0)$ be on $E_{a,b}$, $K_2 = \mathbb{Q}([4^{-1}]P)$, and $\omega_2: \Gal(K_2/\mathbb{Q}) \mapsto \AGL_2(\mathbb{Z}/4\mathbb{Z})$. Then $\im(\omega_2)$ is conjugate to a subgroup of $H_2$ if and only if the ``fabulous'' polynomial

\begin{align*}\label{mylab}
f_{a,b}(x) =&\mbox{ }x^4-768b^2 x^3-2048(a^4 b^3 + a^3 b^3 - 8a^2 b^4 + 36a b^4 - 16b^5 + 81b^4)x^2 \\
 &+ 1048576(a^4 b^5 - a^3 b^5 + 8a^2 b^6 - 36a b^6 + 16b^7)x \\
 &+ 262144(-a^{10} b^5 + a^9 b^5 - 16a^8 b^6 + 72a^7 b^6 - 96a^6 b^7 - 55a^6 b^6 + 512a^5 b^7 \\
 &- 256a^4 b^8 - 1724a^4 b^7 + 896a^3 b^8 + 1272a^3 b^7 - 256a^2 b^9 \\
 &- 3984a^2 b^8 - 256a b^9 + 18144a b^8 - 8256 b^9 - 8748b^8)
\end{align*}

has a rational root.
\end{lemma}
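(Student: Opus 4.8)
The plan is to convert the group‑theoretic containment ``$\im(\omega_2)$ is conjugate to a subgroup of $H_2$'' into the existence of a rational root of an explicit quartic, via a fixed‑point criterion together with the Galois correspondence. First I would record the purely group‑theoretic fact that, writing $A=\agl{2}$, a subgroup $G\subseteq A$ satisfies $G\subseteq gH_2g^{-1}$ for some $g\in A$ if and only if $G$ fixes the coset $gH_2$ under the left‑multiplication action of $A$ on the coset space $A/H_2$. Since $H_2$ has index $4$ in $A$, this is an action on a four‑element set, and the stabilizer of $gH_2$ is exactly $gH_2g^{-1}$; hence $\im(\omega_2)$ is conjugate into $H_2$ precisely when $\im(\omega_2)$ has a fixed point on $A/H_2$.

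Next I would realize this four‑element action over $\Q$. Composing $\omega_2$ with the permutation action on $A/H_2$ gives a homomorphism $\Gal(K_2/\Q)\to S_4$, and by the Galois correspondence I would choose an element $\theta\in K_2$ whose stabilizer under $\omega_2$ is exactly $H_2$, so that $\theta$ generates the degree‑four subextension fixed by a copy of $H_2$. Working generically over the function field $\Q(a,b)$, where the arboreal image is all of $A$, the element $\theta$ has exactly four conjugates $\theta_1,\dots,\theta_4$, in bijection with the cosets $A/H_2$, and I would take the fabulous polynomial to be their minimal polynomial $f_{a,b}(X)=\prod_{i=1}^{4}(X-\theta_i)$. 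Its coefficients are symmetric in the orbit, hence fixed by $\Gal(K_2/\Q(a,b))$ and thus lie in $\Q(a,b)$; clearing denominators gives an element of $\Q[a,b][X]$. At a rational pair $(a,b)$ the specialized roots $\bar\theta_i$ are permuted by $\im(\omega_2)$ according to the action on $A/H_2$, so $\im(\omega_2)$ fixes a coset if and only if some $\bar\theta_i$ is fixed by the whole Galois group, i.e. rational, i.e. $f_{a,b}$ has a rational root. Combined with the first paragraph, this is exactly the claimed equivalence.

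The computational core is producing $\theta$ and $f_{a,b}$ explicitly. I would use the Tate‑normal form: by exercise $3.1$ of \cite{MR1312368}, any pair $(E,P)$ with $P,2P,3P\neq 0$ has the shape $E_{a,b}$ with $P=(0,0)$, so I can use the division polynomials of $E_{a,b}$ to write the coordinates of the $4$‑division points $[4^{-1}]P$ over $\Q(a,b)$. Using the explicit generators of $H_2$ recorded above, together with the bijection between $[4^{-1}]P$ and $\ztwok{2}^2$ underlying the definition of $\omega_2$, I would determine how generators of $A$ act on the coordinates of a chosen $\beta\in[4^{-1}]P$, isolate an $H_2$‑invariant combination $\theta$, and compute its minimal polynomial over $\Q(a,b)$ in Magma; this should yield the displayed quartic $f_{a,b}(X)$. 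The main obstacle is precisely this step: correctly singling out an $H_2$‑invariant generator among the $4$‑division coordinates and carrying out the elimination over $\Q(a,b)$ is a heavy symbolic computation, and one must verify that the symmetric functions of the orbit of $\theta$ genuinely descend to $\Q(a,b)$ before matching the output against the stated polynomial.

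Finally I would invoke the $j$‑invariant hypothesis. The excluded locus $j=\frac{2048t^{3}}{3(t+1)(t+3)^{2}}$ is the condition guaranteeing that the mod‑$4$ classical representation is surjective onto $\gl{2}$ (the analogue of the criterion of \cite{MR2995149} used in Lemma~\ref{IsHk}); this is what forces $f_{a,b}$ to remain separable of degree four after specialization, keeping the bijection between its roots and the cosets $A/H_2$ intact. Separability is essential, since it prevents a coincidence of roots from producing a spurious rational root that does not correspond to an actual fixed coset, and it also ensures $P,2P,3P\neq 0$ so that the normal form $E_{a,b}$ is valid. With separability in hand, a rational root of $f_{a,b}$ detects a genuine $\im(\omega_2)$‑fixed coset, completing the equivalence in both directions.
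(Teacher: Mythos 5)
Your overall strategy is essentially the paper's: both proofs build a quartic resolvent whose four roots are indexed by the left cosets of $H_2$ in $\agl{2}$, observe that $\Gal(K_2/\Q)$ permutes the roots through the coset action ($\sigma(\zeta_C)=\zeta_{\sigma C}$), and conclude that a rational root is equivalent to a Galois-fixed coset, which (since the stabilizer of $gH_2$ is $gH_2g^{-1}$) is equivalent to $\im(\omega_2)$ being conjugate into $H_2$. The paper's root attached to a coset $C$ is the explicit invariant $\zeta_C=\sum_{\sigma\in C}\sigma(x(\beta_2+A))\,\sigma(x(\beta_2+B))\,\sigma(x(\beta_2+A+B))$, which is defined coset-by-coset for \emph{any} image; your version, a primitive element $\theta$ of the $H_2$-fixed field via the Galois correspondence, requires the additional (asserted, unproven) fact that the generic arboreal image over $\Q(a,b)$ is all of $\agl{2}$, plus a specialization-compatibility argument. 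Those are fillable, so they are not the main problem.

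The genuine gap is separability, i.e.\ the role of the $j$-invariant hypothesis. You correctly note that distinctness of the four roots is essential for the direction ``rational root $\Rightarrow$ image conjugate into $H_2$'' (a coincidence of roots could produce a spurious rational root), but your proposed source of separability is wrong: you claim the excluded locus $j=\frac{2048t^3}{3(t+1)(t+3)^2}$ is the surjectivity criterion of \cite{MR2995149} for the classical mod-$2^k$ representation. That criterion is a different condition entirely (it involves $j=-4t^3(t+8)$, together with conditions on $\pm\Delta,\pm 2\Delta$ and rational $2$-torsion); moreover, surjectivity of the classical representation is neither a hypothesis of this lemma nor would it imply that the four resolvent values are numerically distinct---root coincidence is not controlled by the size of the Galois image. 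What the paper actually does is compute the discriminant of $f_{a,b}$ explicitly, finding $\mathrm{disc}(f_{a,b})=-b^{15}\Delta_{a,b}\,g(a,b)$ for an explicit polynomial $g(a,b)$, show that $g(a,b)=0$ defines a rational curve, and verify that curves on that locus are exactly those with $j$-invariant $\frac{2048t^3}{3(t+1)(t+3)^2}$; the hypothesis of the lemma excludes precisely this locus (while $b=0$ or $\Delta_{a,b}=0$ give singular curves). Without this discriminant computation and the identification of its vanishing locus, the ``if'' direction of your equivalence is unproven, so your argument as written does not close.
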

\begin{proof}
Regard $a$ and $b$ as indeterminates and $E_{a,b}$ as a curve over $K=\mathbb{Q}(a,b)$. Let $\beta_{2}$ be a 4-division point of $P$; all $4$-division points of $P$ have the form $\beta_2 + cA + dB$ of $P$, where $0 \leq c,d \leq 3$, and $A, B$ generate $E[4] \cong (\mathbb{Z}/4\mathbb{Z})^2$. For each of the four left cosets $C$ of $H_2$ in $\AGL_2(\mathbb{Z}/4\mathbb{Z})$, define
$$\zeta_C = \sum\limits_{\sigma \in C} \sigma(x(\beta_2+A))\cdot  \sigma(x(\beta_2+B))\cdot  \sigma(x(\beta_2 + A + B)).$$
Now for $\sigma \in \Gal(K_2/\mathbb{Q}) \subset \AGL_2(\mathbb{Z}/4\mathbb{Z})$ we have $\sigma(\zeta_C) = \zeta_{\sigma C}$, and hence the polynomial
$$f_{a,b}(x) := \prod\limits_{C} (x-\zeta_C)$$
has coefficients in $K$. We computed the degree $16$ polynomial whose roots are the
$x$-coordinates of the $4$-division points of $P$ and found that the coefficients are in $\Z[a,b]$. This implies that the $\zeta_{C}$ are integral over $\Z[a,b]$. Since $\Z[a,b]$ is integrally closed in $K$, the coefficients of $f_{a,b}(x)$ are integral over $K$ and hence are in $\Z[a,b]$.

If we imagine our curve as $E_{a,b,c}:y^2+axy+bcy=x^3+bx^2$, making the change of variables $x \mapsto s^{2} x$ and $y \mapsto s^{3} y$, the coefficients rescale as $a\mapsto sa$, $b\mapsto s^2b$ and $c\mapsto sc$. (We say that $a$ and $c$ have weight $1$, while $b$ has weight $2$.) This shifts the roots of an analogous $f_{a,b,c}$ from $\zeta\mapsto s^6 \zeta$ (since each $\zeta$ has three factors of $x$-coordinates). It follows that the weight of the coefficient of $x^{i}$ in $f_{a,b,c}$ is $24-6i$. Specializing to $c=1$, we have an upper bound on the degree of the coefficients of $f_{a,b}$.

To numerically compute $f_{a,b}(x)$, we determine how automorphisms of $K_2$ act on the $x$-coordinates. Let $\sigma \in \Gal(K_2/\mathbb{Q})$ and let $\omega_2(\sigma) = (\vec{v},M)$ where $\vec{v}$ is a vector in ($\mathbb{Z}/4\mathbb{Z}$) and $M \in AGL_2(\mathbb{Z}/4\mathbb{Z})$. Then $\sigma(\beta_2 + cA + dB) = \beta_2 + eA + fB$ where $e, f$ are the vector coordinates computed by the equation $\begin{bmatrix}e\\f\end{bmatrix} = M\begin{bmatrix}c\\d\end{bmatrix} + \vec{v}.$

We created a function $\sigma$ in Magma that produces the $x$-coordinate of $\beta_2 + eA + fB$ based on the input $\begin{bmatrix}c\\d\end{bmatrix}$ by using the periods of $E$ and the elliptic logarithm and elliptic exponential functions.
We computed $f_{a,b}$ for many values of $a$ and $b$ and then determined the coefficients of $f_{a,b}$ as polynomials in $a$ and $b$ using linear algebra and the degree bound proven above.

The discriminant of $f_{a,b}$ is $-b^{15}\Delta_{a,b} g(a,b)$ with $\Delta$ the discriminant of the curve $E_{a,b}$ and $$g(a,b)=a^9 + 16a^7b - 46a^6b + 96a^5b^2 - 360a^4b^2 + 256a^3b^3 + 512a^3b^2 - 672a^2b^3 + 256ab^4 + 128b^4.$$
In fact, $g(a,b)$ defines a curve of genus $0$, and this curve is isomorphic to $\mathbb{P}^{1}$.
Elliptic curves defined by $a$ and $b$ on this bad locus have $j$-invariant equal to $\frac{2048\cdot t^3}{3(t+1)(t+3)^2}$. We do not consider this case, and so we may assume that $g(a,b)\neq 0$. Similarly, $\Delta_{a,b}$ or $b=0$ make $E_{a,b}$ singular. Thus, we may assume the discriminant is non-zero and $f_{a,b}$ has distinct roots. 

Following the argument of \cite{MR2995149}, we have that $\sigma\in\Gal(K_2/\Q)$
acts on the roots of $f_{a,b}$ by $\sigma(\zeta_{C}) = \zeta_{\sigma C}$. Since the roots are
distinct, we have that $f_{a,b}$ has a rational root if and only if for all $\sigma \in \Gal(K_{2}/\Q)$, there is a coset $C$ so that $\sigma C = C$. This is equivalent to the image of $\omega_{2}$ being contained in a conjugate of $H_{2}$.
\end{proof}

If we consider $f_{a,b}(x)$ as a polynomial in three variables $f(x,a,b)$, then there is a rational curve lying on the surface $f(x,a,b)=0$ which yields a one-parameter family of curves $E_{a(t),b(t)}$. Indeed, the polynomial $f(-96b^2,a,b)$ is a curve of genus 0 with a rational point, which implies that the points on the curve can be parametrized. Using Magma, one finds that a parametrization is given by $(p_1(t):p_2(t):p_3(t))$, where
\begin{align*}
p_1(t) =& (t-25)(t+35)(t^2-29t+676)(t^2-10t-279)^2(t^2+10t+97),\\
p_2(t) =& (t-25)(t+35)^2(t^2-29t+676)^3,\\
p_3(t) =& (t^2-10t-279)^4(t^2+10t+97).
\end{align*}
Thus, rational points on $f(-96b^2,a,b)$ are of the form $(a,b) = \left(\frac{p_1(t)}{p_3(t)},\frac{p_2(t)}{p_3(t)}\right)$.

Having computed this family, we proceed with the proof of our final theorem, restated here:

\begin{main3}
There exists a one-parameter family of curve-point pairs $(E,P)$ such that infinitely many pairs in the family have arboreal image conjugate to $H_k$. In particular, infinitely many pairs $(E,P)$ in our family have the property that the density of primes $p$ for which the reduction of $P$ modulo $p$ has odd order is $\frac{179}{336}$.
\end{main3}

\begin{proof}
We invoke Hilbert's Irreducibility Theorem to show that the image of the arboreal representation is kinetic for curves in our one-parameter family. The conditions for a subgroup to be kinetic, as described in Section 6, are all irreducibility conditions on various polynomials. First, the conditions for surjectivity of the classical Galois representation given in \cite{MR2995149} are defined by the irreducibility over $\Q$ of the 2-torsion polynomial, $x^2-\Delta_t$, $x^2+\Delta_t$, $x^2-2\Delta_t$, $x^2+2\Delta_t$, and $j_t+4x^3(x+8)$.  This irreducibility also ensures that the curves are not in the bad locus of our surface, since it can be checked that all curves in the bad locus have rational 2-torsion. The second kinetic condition is satisfied if the point $(0,0)$ is not twice a rational point, which is ensured if the 2-division polynomial of $(0,0)$ is irreducible.

Now a Magma computation shows that all of these polynomials are irreducible over $\Q(t)$ for the $\Delta_t$ and $j_t$ associated to our family of curves $E_t$. Hilbert's Irreducibility Theorem then implies that there are infinitely many specializations of $t$ such that all of the required polynomials stay irreducible over $\Q$ simultaneously. Furthermore, neither the $j$-invariant nor the discriminant of our family are constant. We conclude that there are infinitely many non-isomorphic and non-singular elliptic curves in our family that have as $\im(\omega_k)$ a kinetic subgroup of $\agl{k}$.

Since the image of the arboreal representation is kinetic for infinitely many curves in this family, and every rational point on $f(x,a,b) = 0$ corresponds to a pair $(E,P)$ with arboreal representation contained within $H_2$, we deduce that these infinitely many curves in fact have image equal to $H_k$ for all $k$. This concludes the proof of Theorem~1.4.

\end{proof}

\appendix
\section{A second look at the ECHO sequence} 
Recall that the ECHO sequence was itself motivated by finding a sequence whose elliptic curve partner has special Galois-theoretic properties. The inspiration behind this first definition (presented in the abstract and duplicated here) was to mirror the general form of the Somos-4 sequence that inspired the work of Rouse and Jones' paper. We hence derived an analogous family of curves. However, proceeding in this ad hoc manner we have only a clunky definition that gives no hints as to why the sequence is structured the way it is; for example, why does this definition have different recurrence relations modulo 3? 

We present here a second definition of the ECHO sequence that is perhaps a little more mathematically pleasing, and also gives better insight into the structure of the sequence by providing a more involved recursive definition that might make any properties derived from the sequence better explained. Furthermore, to flesh out a more whole picture of the sequence, we prove the main result of section~\ref{sequences} using the added flexibility of the two equivalent definitions.

First recall our original definition.
\begin{definition}
We define the ECHO sequence $\{b_n\}$ recursively by $(b_0,b_1,b_2,b_3)=(1,1,2,1)$ and for $n\geq 4$,
$$b_n=\begin{cases}
\dfrac{b_{n-1}b_{n-3}-b_{n-2}^2}{b_{n-4}} &\mathrm{if}~ n\not\equiv 0\pmod 3,\\
\dfrac{b_{n-1}b_{n-3}-3b_{n-2}^2}{b_{n-4}} &\mathrm{if}~ n\equiv 0\pmod 3.
\end{cases}$$
\end{definition}

However, a second look upon our work offered to us by Michael Somos lends us another perspective.

\begin{definition} Define the ECHO sequence $\{b_n\}$ by $$(b_0,b_1,b_2,b_3,b_4,b_5,b_6)=(1, 1, 2, 1, -3, -7, -17),$$ and further recursively for $n\geq 7$ by $$b_n:=\dfrac{-b_{n-6}b_{-1}+5b_{n-4}b_{n-3}}{b_{n-7}}.$$

And again, using the relationship $b_{n}b_{n+7}=-b_{n+1}b_{n+6}+5b_{n+3}b_{n+4}$, the second definition also defines the ECHO sequence for negative indices.\end{definition}

The flexibility of this new definition is that it need not be concerned with indexing; the use of more previous elements of the sequence, while more ``restrictive'' in some sense, also gives us more ability to manipulate coprimality conditions.

To prove that the definitions are equivalent, recall this lemma from section~\ref{sequences} and consider the following derived sequence that will bridge the gap between the two definitions. 

\begin{lemma}
For $n\geq 0$, define
$$h(n)=\begin{cases}
b_{n-3}^2b_{n}^2 + b_{n-3}b_{n-1}^3 + 3b_{n-2}^3b_{n} - 3b_{n-2}^2b_{n-1}^2 & \mathrm{if}~n\equiv 0\pmod 3,\\
3b_{n-3}^2b_{n}^2 + b_{n-3}b_{n-1}^3 + b_{n-2}^3b_{n} - b_{n-2}^2b_{n-1}^2 &  \mathrm{if}~n\equiv 1\pmod 3,\\
b_{n-3}^2b_{n}^2 + 3b_{n-3}b_{n-1}^3 + b_{n-2}^3b_{n} - 3b_{n-2}^2b_{n-1}^2 & \mathrm{if}~n\equiv 2\pmod 3.
\end{cases}$$
Then $h(n)=0$ for all $n\in\mathbb{N}$.
\end{lemma}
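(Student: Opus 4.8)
The plan is to prove $h(n)=0$ by induction, using the recurrence for $\{b_n\}$ to factor $h(n)$ through $h(n-1)$. The key is to establish the single multiplicative identity
\[
h(n) = \frac{b_n}{b_{n-4}}\,h(n-1),
\]
valid for \emph{every} integer index $n$, where we invoke the two-sided form of the recurrence so that negative indices are available. Since each $b_n$ is a nonzero integer (integrality is Lemma~\ref{integralSeq}, and nonvanishing is forced by the recurrence being well defined), this identity shows that $h(n)=0$ if and only if $h(n-1)=0$. Consequently it suffices to verify a \emph{single} base case, and the value $h(n)$ is then pinned to zero for all $n$ by propagating the equivalence both upward and downward along the chain of indices. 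This is precisely the mechanism used for Lemma~\ref{h}, so the present restatement could simply cite that result; below I describe the self-contained induction.

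First I would record the base case $h(1)=0$ by direct substitution of the initial data, using the symmetry $b_n=-b_{-(n+1)}$ to supply the values $b_{-2},b_{-1}$ needed to evaluate $h$ at $n=1$ (one finds $b_{-1}=b_{-2}=-1$, whence the four terms of $h(1)$ read $3-1-1-1=0$). Next I would establish the factorization identity separately in each residue class modulo $3$. For $n\equiv 1\pmod 3$ one substitutes $b_n=(b_{n-1}b_{n-3}-b_{n-2}^2)/b_{n-4}$ into $h(n)$, regroups, and recognizes the cofactor as $h(n-1)$ in its $n\equiv 0$ branch; this is the computation carried out explicitly in the proof of Lemma~\ref{h}. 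The cases $n\equiv 0$ and $n\equiv 2\pmod 3$ proceed by the same substitution, using the coefficient $3$ in the recurrence precisely when $n\equiv 0\pmod 3$ and selecting the matching branch of $h(n-1)$.

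The main obstacle is the bookkeeping in the identity $h(n)=\frac{b_n}{b_{n-4}}h(n-1)$ across the three residue classes: both $h$ and the recurrence carry case-dependent coefficients (the stray factors of $3$), and these must interlock so that after substitution and clearing the factor $b_{n-4}$ the residual polynomial is \emph{exactly} $h(n-1)$ in its correct branch. The manipulation is finite but error-prone, so I would verify one residue by hand as a check on the coefficient pattern and confirm the remaining two in Magma, as the paper does for the analogous identities used in Lemma~\ref{twoNplusthree}.
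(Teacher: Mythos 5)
Your proposal is correct and follows essentially the same route as the paper: induction anchored at $h(1)=0$, with the multiplicative identity $h(n)=\frac{b_n}{b_{n-4}}h(n-1)$ established in each residue class modulo $3$ by substituting the recurrence (one case done by hand, the rest by the same pattern or computer algebra). Your explicit handling of the negative-index values for the base case and the downward propagation to cover $n=0$ are minor points the paper leaves implicit, but the mechanism is identical.
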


\begin{definition} For all $n\in\mathbb{N}$, define the sequence $d_n$ by $d_n:=b_nb_{n+5}-b_{n+2}b_{n+3}$. \end{definition}

We truly mean that $\{d_n\}$ is well-defined for either definition $\{b_n\}$ of the ECHO sequence. However, before having proved equality, one may consider there to be two sequences $d_{1,n}$ and $d_{2,n}$ for each definition of the ECHO sequence. This is of short lived consequence to us, however, because we directly prove their equivalence here.

\begin{corollary} For all $n\in\mathbb{N}$, $b_{n+7}d_{1, n}=b_{n+1}d_{1, n+3}$.\end{corollary}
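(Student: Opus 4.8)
The plan is to prove a clean closed form for the auxiliary sequence and let the corollary fall out of it. Concretely, I would show that
\[
d_{1,n} = c_n\, b_{n+1} b_{n+4}, \qquad c_n = \begin{cases} 3 & n \equiv 0 \pmod 3,\\ 1 & \text{otherwise,}\end{cases}
\]
after which the corollary is immediate: both $b_{n+7} d_{1,n}$ and $b_{n+1} d_{1,n+3}$ equal $c_n\, b_{n+1} b_{n+4} b_{n+7}$, using only that $c_{n+3} = c_n$. No cancellation or division is needed at this last step, so the entire difficulty is concentrated in establishing the closed form.

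To get there, I would first collapse $d_{1,n}$ using the defining recurrence twice. Writing it as $b_m b_{m-4} = b_{m-1}b_{m-3} - \epsilon_m b_{m-2}^2$ with $\epsilon_m = 3$ when $3 \mid m$ and $\epsilon_m = 1$ otherwise, I substitute $b_{n+1}b_{n+5} = b_{n+4}b_{n+2} - \epsilon_{n+5}b_{n+3}^2$ and then $b_n b_{n+4} = b_{n+1}b_{n+3} - \epsilon_{n+4}b_{n+2}^2$ into $b_{n+1} d_{1,n} = b_n b_{n+1} b_{n+5} - b_{n+1} b_{n+2} b_{n+3}$. The two copies of $b_{n+1}b_{n+2}b_{n+3}$ telescope, leaving the key auxiliary identity
\[
b_{n+1} d_{1,n} = -\bigl(\epsilon_{n+4}\, b_{n+2}^3 + \epsilon_{n+5}\, b_n b_{n+3}^2\bigr).
\]

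It then suffices to verify $c_n\, b_{n+1}^2 b_{n+4} = -(\epsilon_{n+4} b_{n+2}^3 + \epsilon_{n+5} b_n b_{n+3}^2)$. Multiplying through by $b_n$ and replacing $b_n b_{n+4}$ by $b_{n+1}b_{n+3} - \epsilon_{n+4} b_{n+2}^2$ turns this into
\[
c_n\, b_{n+1}^3 b_{n+3} - c_n \epsilon_{n+4}\, b_{n+1}^2 b_{n+2}^2 + \epsilon_{n+4}\, b_n b_{n+2}^3 + \epsilon_{n+5}\, b_n^2 b_{n+3}^2 = 0.
\]
Splitting on $n \bmod 3$ and inserting the matching values of $\epsilon_{n+4}, \epsilon_{n+5}, c_n$, this expression is, term for term in each of the three cases, exactly the assertion $h(n+3) = 0$ of Lemma~\ref{h}. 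Since no term of the sequence vanishes (Lemma~\ref{periodic5} rules out $b_m = 0$), I may cancel $b_n$ and then $b_{n+1}$ to recover the closed form, and hence the corollary.

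The only genuine obstacle is the bookkeeping in the mod-$3$ case split: one must confirm that $\epsilon_{n+4} = \epsilon_{n+7}$ and $\epsilon_{n+5} = \epsilon_{n+8}$, that each branch lands on the correct case of $h$, and that the sign conventions align so that Lemma~\ref{h} supplies precisely the needed vanishing. This is where all the arithmetic content of the sequence is used, and it is the step most prone to index or sign slips; everything surrounding it is a pair of recurrence substitutions together with the elementary observation that $c_{n+3} = c_n$.
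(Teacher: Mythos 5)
Your proof is correct, and it takes a genuinely different route from the paper's. The paper proves the identity $b_{k+7}d_k=b_{k+1}d_{k+3}$ head-on: it expands the difference, rewrites everything in terms of $b_k,\dots,b_{k+4}$ via the recurrence, and invokes a Magma factorization to see that $h(k+3)$ divides the resulting numerator; the closed form $d_n=c_nb_{n+1}b_{n+4}$ is only obtained \emph{afterwards}, by induction from this very identity. You reverse the order: your two substitutions collapsing $b_{n+1}d_{1,n}$ to $-\bigl(\epsilon_{n+4}b_{n+2}^3+\epsilon_{n+5}b_nb_{n+3}^2\bigr)$ are correct, the three residue cases do match the three branches of $h(n+3)$ term for term (I checked all three), and the corollary then follows from $c_{n+3}=c_n$ alone. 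What your route buys: no computer algebra, an explicit view of exactly where Lemma~\ref{h} enters, and the stronger closed form as a byproduct. What it costs: you must cancel $b_n$ and $b_{n+1}$, so you need nonvanishing of sequence terms; your appeal to Lemma~\ref{periodic5} is the right one (no term is divisible by $5$, hence none is zero -- coprimality from Lemma~\ref{integralSeq} would not suffice), whereas the paper's direct expansion avoids this issue. One further point in your favor: your coefficient convention, $c_n=3$ exactly when $3\mid n$, is the numerically correct one ($d_0=-9=3b_1b_4$, $d_2=-17=b_3b_6$, $d_3=-18=3b_4b_7$), while the paper's later corollary places the factor of $3$ at $n\equiv 2\pmod 3$ and its proof computes $d_4$ while calling it $d_3$; that statement carries an index shift, so the apparent disagreement is not an error on your part.
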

\begin{proof}
For all $k\in\mathbb{N}$, we hope to show
\begin{align*}b_{k+7}d_k&=b_{k+1}d_{k+3}\\
\Leftrightarrow b_{k}b_{k+5}b_{k+7}+b_{k+1}b_{k+5}b_{k+6}-b_{k+2}b_{k+3}b_{k+7}-b_{k+1}b_{k+3}b_{k+8}&=0.\end{align*}
Again after rewriting this expression with terms $b_{k}$ through $b_{k+4}$ with substitution by our (first) definition, factoring with Magma we find that $h(k+3)=0$ is a factor. And equivalently, $b_{k+2}d_k=b_{k-4}d_{k+3}$ as desired.
\end{proof}

\begin{proposition} For all $n\in\mathbb{N}$, $b_{n+7}d_{2,n}=b_{n+1}d_{2, n+3}$.\end{proposition}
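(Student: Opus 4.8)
The plan is to prove this purely from the recurrence attached to the second definition, namely the relation
\[
b_m b_{m+7} = -b_{m+1} b_{m+6} + 5 b_{m+3} b_{m+4},
\]
which holds for every index $m$; call it $(\ast)$. In contrast to the preceding corollary, which invoked the first definition and a Magma factorization through $h$, no appeal to $h$ or to the other definition is needed here: the identity falls out of applying $(\ast)$ at two consecutive indices.

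First I would expand both sides using $d_{2,m} = b_m b_{m+5} - b_{m+2} b_{m+3}$. Writing $d$ for $d_2$, the claim $b_{n+7} d_n = b_{n+1} d_{n+3}$ is equivalent, after expanding and collecting, to the vanishing of
\[
b_n b_{n+5} b_{n+7} + b_{n+1} b_{n+5} b_{n+6} - b_{n+2} b_{n+3} b_{n+7} - b_{n+1} b_{n+3} b_{n+8}.
\]
The goal is then to show this cubic expression in the $b_i$ is identically zero.

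The key step is to collapse the two pairs of triple products separately using $(\ast)$. Multiplying $(\ast)$ at $m = n$ through by $b_{n+5}$ gives
\[
b_n b_{n+5} b_{n+7} + b_{n+1} b_{n+5} b_{n+6} = 5 b_{n+3} b_{n+4} b_{n+5},
\]
which accounts for the first two terms. Multiplying $(\ast)$ at $m = n+1$, namely $b_{n+1} b_{n+8} = -b_{n+2} b_{n+7} + 5 b_{n+4} b_{n+5}$, through by $b_{n+3}$ gives
\[
b_{n+1} b_{n+3} b_{n+8} + b_{n+2} b_{n+3} b_{n+7} = 5 b_{n+3} b_{n+4} b_{n+5},
\]
which accounts for the last two terms. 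Since both pairs reduce to the common value $5 b_{n+3} b_{n+4} b_{n+5}$, their difference vanishes and the displayed expression is zero, completing the proof.

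Because the argument is a direct two-step substitution, there is little genuine obstacle; the only care required is bookkeeping — correctly applying $(\ast)$ at the shifted index $m = n+1$ and matching the triple products it produces. It is worth emphasizing that this yields a clean, Magma-free derivation of the very functional equation for $d$ that the preceding corollary obtained from the first definition, and this coincidence of functional equations is precisely the bridge used to identify the two definitions.
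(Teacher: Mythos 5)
Your proof is correct and is essentially the paper's own argument: the paper likewise proves this identity purely from the second recurrence (no appeal to $h$ or the first definition), applying it at indices $n$ and $n+1$ so that both pairs of triple products collapse to the common term $5b_{n+3}b_{n+4}b_{n+5}$. The only difference is presentational --- the paper organizes the computation as a reversible chain of equalities, adding $5b_{n+3}b_{n+4}b_{n+5}$ to both sides of a tautology and then factoring, whereas you apply the recurrence forward at the two indices and subtract.
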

\begin{proof}
By substitution, we seek to show that for $n\in\mathbb{N}$, 
$$b_nb_{n+5}b_{n+7}-b_nb_{n+2}b_{n+3}=b_{n+1}b_{n+3}b_{n+8}-b_{n+1}b_{n+5}b_{n+6}.$$

By substitutions with our recursive (second) definition, we have the following: \begin{align*}
-b_{n+1}b_{n+5}b_{n+6}-b_nb_{n+2}b_{n+3}&=-b_{n+2}b_{n+3}b_{n+7}-b_{n+1}b_{n+5}b_{n+6}\\
-b_{n+1}b_{n+5}b_{n+6}+5b_{n+3}b_{n+4}b_{n+5}-b_nb_{n+2}b_{n+3}&=-b_{n+2}b_{n+3}b_{n+7}+5b_{n+3}b_{n+4}b_{n+5}-b_{n+1}b_{n+5}b_{n+6}\\
b_{n+5}(-b_{n+1}b_{n+6}+5b_{n+3}b_{n+4})-b_nb_{n+2}b_{n+3}&=b_{n+3}(-b_{n+2}b_{n+7}+5b_{n+4}b_{n+5})-b_{n+1}b_{n+5}b_{n+6}\\
b_nb_{n+5}b_{n+7}-b_nb_{n+2}b_{n+3}&=b_{n+1}b_{n+3}b_{n+8}-b_{n+1}b_{n+5}b_{n+6}.\end{align*} 
Thus, we are done.\end{proof}

We note that both definitions of the ECHO sequence satisfy the equation 
$$b_{k}b_{k+5}b_{k+7}+b_{k+1}b_{k+5}b_{k+6}=b_{k+2}b_{k+3}b_{k+7}+b_{k+1}b_{k+3}b_{k+8},$$
and since we may compute the first few values, induction shows both definitions produce identical sequences as desired. Moreover, this derived sequence $\{d_n\}$ reveals even stronger statements about the ECHO sequence.

\begin{corollary} For all $n>1$,
$$d_n=\begin{cases}
b_{n+1}b_{n+4}&\text{if }n\equiv 0,1\pmod 3\\
3b_{n+1}b_{n+4}&\text{if }n\equiv 2\pmod 3.
\end{cases}
$$\end{corollary}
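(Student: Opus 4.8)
The plan is to bootstrap from the three-step recursion for $\{d_n\}$ established in the preceding corollary, namely $b_{n+7}d_n = b_{n+1}d_{n+3}$, and to run an induction separately within each residue class modulo $3$. Since every term $b_m$ of the ECHO sequence is a nonzero integer (each point $(2m+1)P$ is a well-defined finite point of $E$ by Lemma~\ref{twoNplusthree}, which forces $b_m \neq 0$), this relation may be rewritten as
\[
d_{n+3} = \frac{b_{n+7}}{b_{n+1}}\, d_n .
\]
The crucial observation is that this recursion is compatible with an ansatz of the shape $d_n = c_n\, b_{n+1}b_{n+4}$: substituting yields $d_{n+3} = \frac{b_{n+7}}{b_{n+1}}\, c_n b_{n+1}b_{n+4} = c_n\, b_{(n+3)+1}b_{(n+3)+4}$, so the coefficient $c_n$ must be constant along each progression $n, n+3, n+6, \dots$. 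Equivalently, the ratio $d_n/(b_{n+1}b_{n+4})$ depends only on the residue of $n$ modulo $3$.

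First I would verify the claimed identity for one representative of each residue class, for instance $n = 2$, $n = 3$, and $n = 4$. This requires only computing the terms $b_0, \dots, b_9$ directly from the defining recurrence and evaluating $d_n = b_n b_{n+5} - b_{n+2}b_{n+3}$ in each of the three cases; each evaluation fixes the value of the constant $c_n$ in its residue class. With these three base cases in hand, the recursion above transports the identity to every larger index in the same class, so the formula holds for all $n > 1$ by induction.

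I do not anticipate a serious obstacle, since the analytic heart of the matter --- the relation $b_{n+7}d_n = b_{n+1}d_{n+3}$ and its derivation from the identity $h(n) = 0$ --- is already in hand. The two points that merit care are the justification that $b_{n+1} \neq 0$, so that passing to the quotient form of the recursion is legitimate, and the correct bookkeeping of which residue class carries the additional factor of $3$. The latter is dictated entirely by the three base-case evaluations rather than by the recursion, since the recursion only propagates each constant forward within its own arithmetic progression and never mixes the classes.
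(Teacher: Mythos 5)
Your proposal is correct and takes essentially the same route as the paper: the paper also checks one base case per residue class (e.g.\ $d_3 = b_4 b_7$ computed directly from the first terms) and then propagates the identity along each progression $n, n+3, n+6, \dots$ via the relation $b_{n+7}d_n = b_{n+1}d_{n+3}$, which is precisely your observation that $d_n/(b_{n+1}b_{n+4})$ is constant on each residue class modulo $3$. The only difference is cosmetic: the paper cancels $b_{k+1}$ without comment, while you take the extra (harmless) care of justifying that the terms are nonzero before dividing.
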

\begin{proof}
Examine first the case $n\equiv 0\mod 3$. Computationally. $d_3=b_3*b_8-b_5*b_6=(-3)(247)-(-17)(2)=-707=(-7)(101)$ so the claim is true for $n=3$. For the sake of induction, suppose that all $n\leq k$ satisfy the claim for some $k\equiv 0\mod 3$. Then 
\begin{align*}
b_{k+1}b_{k+4}&=d_k\\
b_{k+1}b_{k+4}&=\dfrac{b_{k+1}d_{k+3}}{b_{k+7}}\\
\implies d_{k+3}&=b_{k+4}b_{k+7}.\end{align*}
Thus, for all $n\in\mathbb{N}$ with $n\equiv 0\pmod 3$, our claim is true by induction. The other two cases modulo $3$ are identical.
\end{proof}

This is sufficient to see integrality. (The structure of the following proof was inspired by a proof of the integrality of the Somos-5 sequence given in \cite{Wemyss}.)

\begin{proposition}\label{integer} For $n\geq 3$, both $b_n\in\mathbb{Z}$ and $(b_n,b_{n-3})=(b_n,b_{n-2})=(b_n,b_{n-1})=1.$\end{proposition}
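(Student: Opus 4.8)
The plan is to prove both assertions simultaneously by strong induction on $n$, taking as the engine the Somos-$5$-type relation that the preceding corollary furnishes. Writing the factorization of $d_n$ back into its definition gives
\[
b_n b_{n+5} = b_{n+2}b_{n+3} + \epsilon_n\, b_{n+1}b_{n+4}, \qquad \epsilon_n \in \{1,3\},
\]
or equivalently, after shifting, $b_n b_{n-5} = b_{n-3}b_{n-2} + \epsilon_{n-5}\, b_{n-4}b_{n-1}$. I would strengthen the inductive hypothesis to assert, for every index below $n$, both integrality and the three coprimality relations $\gcd(b_k,b_{k-1}) = \gcd(b_k,b_{k-2}) = \gcd(b_k,b_{k-3}) = 1$; the base cases are the finitely many initial terms, checked by direct computation. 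The two recurrences at our disposal, namely the original four-term relation $b_j b_{j-4} = b_{j-1}b_{j-3} - \delta_j b_{j-2}^2$ with $\delta_j \in \{1,3\}$ and the Somos-$5$ relation above, are what provide the ``added flexibility,'' and I would keep both in play throughout.

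For the coprimality half of the inductive step, suppose a prime $p$ divided $b_n$ together with one of $b_{n-1},b_{n-2},b_{n-3}$. Reducing the Somos-$5$ relation $b_n b_{n-5} = b_{n-3}b_{n-2} + \epsilon_{n-5}b_{n-4}b_{n-1}$ modulo $p$ forces $p$ to divide one of the two products on the right, and chasing this back through the relation produces two terms at distance at most three that the hypothesis declares coprime, a contradiction. The coefficients $\epsilon \in \{1,3\}$ make the prime $p=3$ special, since the weighted term can vanish spuriously modulo $3$; this case I would dispose of directly from the explicit period of $\{b_n \pmod 3\}$ recorded in Lemma~\ref{periodic}, which shows $3$ divides only isolated terms spaced nine apart, so no two terms within distance three are simultaneously divisible by $3$.

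The integrality half is the crux, and is where I expect the real work to lie. Fixing a prime power $p^a \,\|\, b_{n-5}$, the inductive coprimality of $b_{n-5}$ with $b_{n-4},b_{n-3},b_{n-2}$ (and with $b_{n-6}$) makes these terms invertible modulo $p^a$, so I must show the numerator $b_{n-3}b_{n-2} + \epsilon_{n-5}b_{n-4}b_{n-1}$ vanishes modulo $p^a$. Applying the Somos-$5$ relation one step down yields $b_{n-6}b_{n-1} \equiv b_{n-4}b_{n-3} \pmod{b_{n-5}}$, which lets me eliminate the stray term $b_{n-1}$ and reduce the whole problem, after multiplying by the unit $b_{n-6}$, to the single congruence $b_{n-2}b_{n-6} \equiv -\epsilon_{n-5}b_{n-4}^2 \pmod{b_{n-5}}$; the original four-term relation at index $n-2$ supplies $b_{n-2}b_{n-6} \equiv -\delta_{n-2}b_{n-4}^2 \pmod{b_{n-5}}$. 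The delicate point, and the main obstacle, is that these two congruences agree only when $\epsilon_{n-5}$ and $\delta_{n-2}$ coincide, which depends on $n \bmod 3$: in the off-diagonal residue classes a factor of $2$ can survive. Making the argument uniform therefore requires selecting, in each residue class modulo $3$, the auxiliary relation whose $\{1,3\}$-coefficient matches, and treating the exceptional primes $p=2$ and $p=3$ by separate parity and periodicity arguments (the latter again leaning on Lemma~\ref{periodic}). Once the congruence $b_{n-2}b_{n-6} + \epsilon_{n-5}b_{n-4}^2 \equiv 0$ is verified for every prime power dividing $b_{n-5}$ in all three classes, it follows that $b_{n-5}$ divides the numerator, so $b_n \in \Z$, completing the induction.
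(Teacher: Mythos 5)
Your reduction is correct as far as it goes: using the Somos-$5$ relation at index $n-6$ to eliminate $b_{n-1}$ and inverting the units $b_{n-6}$ and $b_{n-3}$, the integrality of $b_n$ does come down to the congruence $b_{n-6}b_{n-2}+\epsilon_{n-5}b_{n-4}^2\equiv 0\pmod{p^a}$ for each prime power $p^a$ exactly dividing $b_{n-5}$, and the coprimality half of your induction (with $p=3$ disposed of by the period-$9$ pattern of Lemma~\ref{periodic}) is sound. The genuine gap sits exactly at what you call ``the main obstacle'': you never close the case $\epsilon_{n-5}\neq\delta_{n-2}$, and the remedies you sketch cannot work. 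If the two congruences really differed by $(\epsilon_{n-5}-\delta_{n-2})\,b_{n-4}^2=\pm 2\,b_{n-4}^2$, with $b_{n-4}$ a unit modulo $p^a$, then the required divisibility $p^a\mid 2\,b_{n-4}^2$ would fail for \emph{every} odd prime power dividing $b_{n-5}$: for instance $n=10$ lies in one of your ``off-diagonal'' classes and has $b_{n-5}=b_5=-7$, and no parity argument at $p=2$ or periodicity argument at $p=3$ says anything about the prime $7$. Nor is there an ``auxiliary relation whose coefficient matches'' waiting to be selected; you would be demanding that two incompatible congruences both hold.

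The resolution is that the mismatch is an artifact of an indexing slip you inherited from the stated corollary: its cases $n\equiv 0$ and $n\equiv 2\pmod 3$ are swapped relative to the definition $d_n=b_nb_{n+5}-b_{n+2}b_{n+3}$ (compute $d_6=b_6b_{11}-b_8b_9=28373-24947=3426=3\,b_7b_{10}$ even though $6\equiv 0$; the corollary's own base-case check actually evaluates $b_{n+1}b_{n+6}-b_{n+3}b_{n+4}$). With the correct pattern, namely $\epsilon_m=3$ exactly when $m\equiv 0\pmod 3$, one has $\epsilon_{n-5}=\delta_{n-2}$ for \emph{all} $n$, your two congruences are literally identical, and your argument closes uniformly with no exceptional primes or residue classes. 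So your proof is completable, but only after re-deriving the coefficient pattern; as submitted it halts at the decisive step. Contrast this with the paper's proof, which is engineered to be insensitive to that bookkeeping: by inductive coprimality write $1=rb_k+sb_{k+1}$, hence $b_{k+5}=r(b_kb_{k+5})+s(b_{k+1}b_{k+5})$, and each parenthesized product is an integer by one of the two recurrences --- only the fact that the coefficients lie in $\{1,3\}$ is ever used, never which value occurs; the three coprimality statements then follow from one-line gcd computations with the three available relations. That Bezout trick is precisely what lets the paper avoid the local prime-by-prime analysis, and the trap you fell into.
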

\begin{proof} Proceed by induction. Note that the first $4$ terms are $1,1,2,1\in\mathbb{Z}$, so the base case is true.

Suppose for all $n\leq k+4$ for some $k$ that $b_n$ is integral and the coprime condition is true. Note since $k+1<k+4$ that $(b_{k},b_{k+1})=1$. By Bezout's lemma, $\exists r,s\in\mathbb{Z}$ such that $1=rb_k+sb_{k+1}\implies b_{k+5}=rb_kb_{k+5}+sb_{k+1}b_{k+5}$.

By the definition of the sequence, for some coefficient $c_1\in\{1,3\}$, 
$b_{k+1}b_{k+5}=b_{k+4}b_{k+2}-c_1b_{k+3}^2$. By the corollary above, 
$$b_kb_{k+5}-b_{k+2}b_{k+3}=d_k=c_2b_{k+1}b_{k+4}\implies b_kb_{k+5}=c_2b_{k+1}b_{k+4}+b_{k+2}b_{k+3}$$ for some $c_2\in\{1,3\}$. Therefore, by the inductive hypothesis we have $b_{k+1}b_{k+5},b_kb_{k+5}\in\mathbb{Z}$, hence $b_{k+5}=rb_kb_{k+5}+sb_{k+1}b_{k+5}\in\mathbb{Z}$ as desired.

To see that $b_{k+5}$ is coprime to the three terms before it, note that $(b_{k+1}b_{k+4},b_{k+2}b_{k+3})=1$ because by the inductive hypothesis, both $b_{k+1}$ and $b_{k+4}$ are coprime to $b_{k+2}b_{k+3}$. Thus,\begin{align*}
(b_{k+1}b_{k+4},b_{k+2}b_{k+3})&=1\\
(b_{k+1}b_{k+4},c_2b_{k+1}b_{k+4}+b_{k+2}b_{k+3})&=1\\
(b_{k+1}b_{k+4},b_{k}b_{k+5})&=1.\end{align*}
Therefore, $b_{k+5}$ is coprime to $b_{k+4}$ as desired. 

Similarly, $b_{k+1}b_{k+5}=b_{k+4}b_{k+2}+\{-1,-3\}\cdot b_{k+3}^2$ by the first definition, meaning that $(b_{k+1}b_{k+5},b_{k+3}^2)=(b_{k+4}b_{k+2},b_{k+3}^2)=1$, and so $b_{k+5}$ is coprime to $b_{k+3}$ as well. 

Last, $b_{k-2}b_{k+5}=-b_{k-1}b_{k+4}+5b_{k+1}b_{k+2}$ by the second definition, meaning that $(b_{k-2}b_{k+5},b_{k+1}b_{k+2})=(b_{k-1}b_{k+4},b_{k+1}b_{k+2})=1$, and so $b_{k+5}$ is coprime to $b_{k+2}$ as well.

Therefore, $b_{k+5}$ is coprime to the previous three terms as desired.
\end{proof}

Lastly, knowing that the ECHO sequence is integral, the proof of our main lemma of section~\ref{sequences}, lemma~\ref{twoNplusthree}, follows identically as before.

\begin{lemma}
Define $P = (4,7)$ on E : $y^{2} + y = x^{3} - 3x + 4$. Then for $n\geq 0$,
$$(2n+1)P=\left(\frac{g(n)}{b_n^2},\frac{f(n)}{b_n^3}\right),$$
where $g(n)=2b_n^2-b_{n-3}b_{n+3}$, and
$$f(n)=\begin{cases}
b_n^3+3b_{n-1}^2b_{n+2}&\mathrm{if}~n\equiv 0\pmod 3,\\
b_n^3+b_{n-1}^2b_{n+2}&\mathrm{if}~n\equiv 1\pmod 3,\\
b_n^3+9b_{n-1}^2b_{n+2}&\mathrm{if}~n\equiv 2\pmod 3.
\end{cases}$$\end{lemma}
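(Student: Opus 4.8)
The plan is to prove the formula by induction on $n$, using the group law on $E$ to pass from $(2k+1)P$ to $(2(k+1)+1)P = (2k+1)P + 2P$, and reducing the resulting rational-function identities to the vanishing of the auxiliary polynomial $h$ from Lemma~\ref{h}. First I would dispatch the base cases $n=0,1,2$ by direct computation, checking that $P=(4,7)$, $3P=(-1,2)$, and $5P=(\tfrac14,-\tfrac{19}{8})$ each agree with the claimed pair $(g(n)/b_n^2,\,f(n)/b_n^3)$ using the initial values $b_0,\dots,b_6$.

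For the inductive step, assume the formula holds for all indices up to $k$, and write $(2k+1)P=(\alpha,\beta)$ with $\alpha=(2b_k^2-b_{k-3}b_{k+3})/b_k^2$ and $\beta=f(k)/b_k^3$. Using $2P=(1,1)$, I would apply the chord-addition formula on the model $y^2+y=x^3-3x+4$. Since $a_1=a_2=0$, the slope is $\lambda=(\beta-1)/(\alpha-1)$ and the $x$-coordinate of the sum is $\lambda^2-\alpha-1$; since $a_3=1$, the negation map sends $(x,y)\mapsto(x,-y-1)$, so once the new $x$-coordinate $\alpha'$ is identified the $y$-coordinate of the sum equals $-\lambda(\alpha'-1)-2$.

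The heart of the argument is then two identities. For the $x$-coordinate I want
$$\lambda^2 - 1 - \alpha - \alpha' = 0, \qquad \alpha' = \frac{2b_{k+1}^2 - b_{k-2}b_{k+4}}{b_{k+1}^2},$$
and for the $y$-coordinate I want
$$-\lambda(\alpha'-1) - 2 = \frac{f(k+1)}{b_{k+1}^3}.$$
Each left-hand side, after clearing denominators and repeatedly eliminating the higher-indexed terms $b_{k-2},b_{k-1},b_{k+1},b_{k+2},b_{k+3},b_{k+4}$ in favor of $b_{k-3},\dots,b_k$ via the recurrence (taking care of the residue-dependent factor of $3$), becomes a rational function whose numerator has $h(k)$ as a factor. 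Since $h(k)=0$ by Lemma~\ref{h}, both identities hold and the induction advances. Finally, to see that the displayed coordinates are already reduced, I would invoke the coprimality $\gcd(b_n,b_{n\pm 3})=1$ from Proposition~\ref{integer}, which makes $2b_n^2-b_{n-3}b_{n+3}$ coprime to $b_n^2$, whence the $y$-coordinate inherits reducedness from its denominator $b_n^3$.

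The main obstacle is the symbolic verification that $h(k)$ divides the numerators of these two identities. This is a genuine multivariate polynomial computation: one must carry out all the substitutions from the recurrence and then factor the result to exhibit $h(k)$, a task most cleanly handled by a computer algebra system such as Magma rather than by hand. The conceptual content is simply that the single polynomial relation $h\equiv 0$ simultaneously certifies both the $x$- and $y$-coordinate steps, so that the bridge between the nonlinear recurrence and the division-point formula rests entirely on Lemma~\ref{h}.
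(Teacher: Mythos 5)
Your proposal is correct and follows essentially the same route as the paper's own proof: induction with the same base cases, the chord-addition with $2P=(1,1)$, the same two rational identities for the $x$- and $y$-coordinates whose numerators are certified to vanish by exhibiting $h(k)$ as a factor via Magma, and reducedness from $\gcd(b_n,b_{n\pm 3})=1$. The only cosmetic difference is that you should note the base cases also require the negative-index terms $b_{-3},b_{-1}$ (available via $b_n=-b_{-(n+1)}$), not just $b_0,\dots,b_6$.
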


\bibliographystyle{plain}
\bibliography{echo}

%

\end{document}